\newtheorem{teo}{Theorem}[section] 
\newtheorem{lemma}[teo]{Lemma} 
\newtheorem{prop}[teo]{Proposition}
\newtheorem{corollario}[teo]{Corollary}
\theoremstyle{definition} 
\newtheorem{definiz}[teo]{Definition} 
\newenvironment{proof1}{\noindent\textit{Proof.}}{\hfill$\square$\medskip\\}
\newenvironment{proof2}{\noindent\textit{Proof of proposition
\ref{distanza}. }}{\hfill$\square$\medskip\\}
\newcommand{\ratio}{{\mathcal{CR}}}
\newcommand{\GL}{{\mathbb{M}(\mathbb{H}^{+})}}
\newcommand{\GLH}{{SL(\mathbb{H}^+)}}
\newcommand{\hh}{{\mathbb{H}}}
\newcommand{\rr}{{\mathbb{R}}}
\newcommand{\s}{{\mathbb{S}}}
\theoremstyle{remark} 
\newtheorem{nota}[teo]{Remark}
\numberwithin{equation}{section}
\begin{document}
\title[Poincar\'{e} metric on $\Delta_{\mathbb{H}}$]{
M\"obius transformations and the Poincar\'e distance in the
quaternionic setting}
 \author[C. Bisi, G. Gentili]
{Cinzia Bisi $^\ast$, Graziano Gentili $^\ast$}
\date{May 28th, 2008}
\thanks{\rm $^\ast$ Partially supported by Progetto MIUR di
Rilevante Interesse Nazionale {\it Propriet{\`a} geometriche
delle variet{\`a} reali e complesse} and by GNSAGA - INDAM}
\address{First author: Dipartimento di Matematica, Universit\'a della
Calabria, Ponte Bucci, Cubo 30b, Arcavacata di Rende (CS), 87036,
Italy}
\address{Second author: Dipartimento di Matematica, Universit\'a di
Firenze, Viale Morgagni 67/A, 50134 Firenze, Italy}
\vspace{1cm}
\email{bisi@math.unifi.it}
\email{bisi@mat.unical.it}
\email{gentili@math.unifi.it}

\subjclass{Primary: 30G35 Secondary: 30C20, 30F45}
\keywords{Functions of hypercomplex variables; quaternionic,
M\"obius transformations; quaternionic Poincar\'e distance and metric}

\begin{abstract} In the space $\hh$ of quaternions, we investigate the natural, invariant 
    geometry of the open, unit disc $\Delta_{\hh}$ and of the open
    half-space $\hh^{+}$. 
    These two domains are diffeomorphic via a Cayley-type transformation.
    We first
    study the geometrical structure of the groups of M\"obius
    transformations of  $\Delta_{\hh}$ and  $\hh^{+}$ 
    and identify original ways of 
    representing them in terms of two (isomorphic) groups of matrices with quaternionic entries. We then define the
    cross-ratio of four quaternions, prove that, when real, it is 
   invariant under the action of 
   the M\"obius transformations, and use it to define
    the analogous of the Poincar\'e distances on $\Delta_{\hh}$ and  $\hh^{+}$. 
   We easily deduce that there exists no isometry between the quaternionic 
    Poincar\'e distance of $\Delta_{\hh}$
and the Kobayashi distance inherited by $\Delta_{\hh}$
  as a domain of $\mathbb{C}^{2}$, in accordance with a direct consequence 
  of the classification of the non compact, rank 1,  symmetric spaces.
\end{abstract}

\maketitle

\section{Introduction}

The study of the intrinsic geometry of the open unit disc $\mathbb{D}=\{z 
\in \mathbb{C} : |z|<1 \}$ of the 
complex plane, bi-holomorphic via the Cayley
transformation to the upper half-plane $\Pi^{+}$ of $\mathbb{C}$, is very 
rich and of great, 
classical interest. The main tool for the study of this geometry is the 
Poincar\'e distance, which turns out to be the integrated distance of 
the Poincar\'e differential metric. In fact the holomorphic self-maps 
of $\mathbb{D}$ reveal to be contractions, and
hence the group of all holomorphic automorphisms of $\mathbb{D}$ are
isometries, for the Poincar\'e distance (and differential metric). 
As a consequence, an approach typical
of differential geometry can be adopted to study the geometric theory 
of holomorphic self-maps of any simply connected domain strictly contained in $\mathbb{C}$. In fact, 
by the Riemann representation theorem, any such domain is bi-holomorphic to $\mathbb{D}$
(and to the upper half plane $\Pi^{+}$). In this setting the
classical groups $SU(1,1)$ and $SL(2, \mathbb{R})$ come into the
scenary: when quotiented by their centers, they represent the group of 
all holomorphic automorphisms (the so called
M\"obius transformations) of $\mathbb{D}$ and $\Pi^{+}$, respectively.

When endowed with the Poincar\'e differential metric, the open, unit disc $\mathbb{D}$ acquires a structure of Riemannian
surface of constant negative curvature, whose geodesics are the arcs of 
circles or straight lines which intersect the
boundary $\partial\mathbb{D}$ orthogonally.

It is interesting to notice that the Poincar\'e distance can be
defined on $\mathbb{D}$ by means of the family of all geodesics mentioned 
above. Following the approach of Siegel, \cite{Siegel}, given two
points $z_{1}, z_{2} \in \mathbb{D}$ one can define the two ends $z_{3},
z_{4}$ of the
(unique) geodesic passing through $z_{1}$ and $z_{2}$ as the
intersections of this geodesic with $\partial\mathbb{D}$. One then orders the
four points
``cyclically'' and defines the Poincar\'e distance $\delta_{\mathbb{D}}(z_{1},z_{2})$ as half the 
logarithm of the cross-ratio of the four points
$z_{1},z_{2},z_{3},z_{4}$.

With this in mind, in the present paper we consider the space $\hh$ of quaternions  and study the geometry of the open, unit disc  $\Delta_{\hh}=\{ q\in
\hh : |q|<1\}$ and of
the half-space $\hh^{+}=\{ q\in \hh : \Re e(q)>0 \}$, which turn out to be diffeomorphic 
via a Cayley-type transformation.
More precisely, we give this paper a double aim. The first one is to study the groups of 
{\em M\"obius transformations} of  $\Delta_{\hh}$ and
of $\hh^{+}$ (i.e. the groups of all quaternionic,
fractional, 
linear transformations which leave $\Delta_{\hh}$ and $\hh^{+}$
invariant, respectively). The second  aim is to give a direct, geometric definition of the
analogue  of the Poincar\'e distance (i.e. the real, hyperbolic distance) and differential metric in the
quaternionic setting, to investigate their most interesting
properties, and to explicitly describe the invariant geometry of the
classical hyperbolic domains $\Delta_{\hh}$ and $\hh^{+}$ of $\hh$. 

In section 2, in order to identify the group of all quaternionic, fractional, linear
transformations, we start by studying the problem of finding the inverse 
of a quaternionic 
$2 \times 2$ matrix. This problem 
corresponds to solve, when possible, a linear system of four quaternionic
equations, and leads us to define, in a direct and very natural way, the
so called Dieudonn\'e determinant of a quaternionic $2 \times 2$
matrix:

\begin{definiz} If $A= \left[ \begin{array}{ll}
             a & b \\
             c & d \\
             \end{array} \right]$ is a $2 \times 2$ matrix with quaternionic entries, then the {\it
(Dieudonn\'e) determinant} of $A$ is defined to be the non negative
real number 
 \begin{eqnarray}
 det_{\mathbb{H}}(A)= \sqrt{|a|^2|d|^2 + |c|^2|b|^2
-2\Re e(c\overline{a}b\overline{d})}.
 \end{eqnarray}
\end{definiz}

\noindent The notion of quaternionic determinant appears in the literature in a
much more general
setting and uses at that level the tool of quasideterminants, 
\cite{GRW, WR}.
Here we study the principal properties of this determinant - giving simple, direct proofs  
of our
assertions - also for the sake of
completeness (see also \cite{C-D, As}). We then exploit these
properties
to investigate the structure of the group of all quaternionic,
fractional, linear
transformations of $\hh$. In fact in section 3 we set $\mathbb{G}=\{
g(q)=(aq+b)(cq+d)^{-1} : a,b,c,d \in \hh, \,\, g \ 
\textnormal{invertible}\  \}$, $GL(2, \hh)=\{A 
\ \ \ 
2\times 2 \ \ \ \textnormal{matrix with quaternionic entries} : det_{\hh}(A)\neq 0\}$ and we prove

\begin{teo}
The set $\mathbb{G}$ of all quaternionic, fractional, linear
transformations 
is a group with respect to composition. The map 
\begin{eqnarray}
   \Phi: A=\left[ \begin{array}{ll}

             a & b \\
             c & d \\
             \end{array} \right]\mapsto
L_{A}(q)=(aq+b)\cdot(cq+d)^{-1}
    \end{eqnarray}
    is a group homomorphism of $GL(2, \mathbb{H})$ onto $\mathbb{G}$
    whose kernel is the center of $GL(2, \mathbb{H})$, that is the
    subgroup 
    $$\left\{ \left[ \begin{array}{ll}

             t & 0 \\
             0 & t \\
             \end{array} \right] : t \in \mathbb{R}\backslash\{0\}
\right\}.
	     $$
\end{teo}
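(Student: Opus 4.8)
The plan is to place the matrix map $\Phi$ at the center of the argument and to derive all three assertions from a single computation—its homomorphism property—together with the determinant theory of Section~2. First I would verify $\Phi(AB)=\Phi(A)\circ\Phi(B)$, i.e. $L_{AB}=L_A\circ L_B$. Writing $B=\begin{bmatrix} a' & b'\\ c' & d'\end{bmatrix}$ and abbreviating $p=c'q+d'$, I substitute $w=L_B(q)=(a'q+b')p^{-1}$ into $L_A(w)=(aw+b)(cw+d)^{-1}$. Using $b=bp\,p^{-1}$ and $d=dp\,p^{-1}$ one rewrites $aw+b=[(aa'+bc')q+(ab'+bd')]p^{-1}$ and $cw+d=[(ca'+dc')q+(cb'+dd')]p^{-1}$; inverting the second factor and forming the product, the central $p^{-1}$ and $p$ cancel, leaving exactly $[(aa'+bc')q+(ab'+bd')]\cdot[(ca'+dc')q+(cb'+dd')]^{-1}=L_{AB}(q)$, whose coefficients are the entries of the ordinary (noncommutative) matrix product $AB$. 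The one delicate point is that the cancellation $p^{-1}\cdot p=1$ is legitimate despite the noncommutativity of $\mathbb{H}$, precisely because $p$ is a single quaternion and not a matrix; this identity, valid wherever the inverses exist, is the engine driving everything else.

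Granting from Section~2 that $\det_{\mathbb{H}}$ is multiplicative and that $\det_{\mathbb{H}}(A)\neq0$ is equivalent to invertibility of $A$, the homomorphism property gives $L_{A^{-1}}\circ L_A=L_I=\mathrm{id}$, so every $L_A$ with $A\in GL(2,\mathbb{H})$ is a bijection with inverse $L_{A^{-1}}$; hence $\Phi$ takes values in $\mathbb{G}$ and is a homomorphism into the group of bijections under composition. For surjectivity onto $\mathbb{G}$ I would argue the converse: given an invertible $g(q)=(aq+b)(cq+d)^{-1}$, its matrix $A=\begin{bmatrix} a & b\\ c & d\end{bmatrix}$ must satisfy $\det_{\mathbb{H}}(A)\neq0$, for otherwise $A$ is singular and, by the rank-one structure of singular $2\times2$ quaternionic matrices, its rows are left-proportional, say $(c,d)=\lambda(a,b)$; then $cq+d=\lambda(aq+b)$ and $L_A(q)=(aq+b)(aq+b)^{-1}\lambda^{-1}=\lambda^{-1}$ is constant, contradicting the invertibility of $g$. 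Thus $\mathbb{G}=\Phi(GL(2,\mathbb{H}))$ is the homomorphic image of a group, hence itself a group, with associativity automatic for composition of maps.

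For the kernel, $A\in\ker\Phi$ means $(aq+b)(cq+d)^{-1}=q$, i.e. $aq+b=qcq+qd$ for all admissible $q$. Setting $q=0$ gives $b=0$; replacing $q$ by $-q$ in $aq=qcq+qd$ and adding the two relations eliminates the terms linear in $q$ and leaves $qcq=0$ for all $q$, so $c=0$ (take $q=1$); the surviving relation $aq=qd$ at $q=1$ gives $a=d$, and then $aq=qa$ for all $q$ forces $a$ into the center $\mathbb{R}$ of $\mathbb{H}$, with $a\neq0$ by invertibility—whence $\ker\Phi=\{\,\mathrm{diag}(t,t):t\in\mathbb{R}\setminus\{0\}\,\}$. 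It remains to identify this with the center of $GL(2,\mathbb{H})$: real scalar matrices are plainly central, while any central $Z$ must commute with $\mathrm{diag}(q,1)$ and $\mathrm{diag}(1,q)$ for every $q\in\mathbb{H}\setminus\{0\}$—forcing its diagonal entries to be real and its off-diagonal entries to vanish—and with $\begin{bmatrix} 0 & 1\\ 1 & 0\end{bmatrix}$, forcing the two diagonal entries to coincide. I expect the homomorphism computation of the first step to be the main obstacle to present cleanly: the kernel reduces to a short functional-equation argument and the remaining points are direct appeals to Section~2, whereas the composition identity is where the noncommutativity of $\mathbb{H}$ must genuinely be controlled.
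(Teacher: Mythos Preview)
Your proof is correct and follows essentially the same route as the paper's: verify $\Phi(AB)=\Phi(A)\circ\Phi(B)$ by direct substitution, deduce that $\mathbb{G}$ is a group as the image of a homomorphism, and identify the kernel by reducing $L_A(q)=q$ to the polynomial identity $qcq+qd-aq-b=0$. The paper's own argument is much terser---it dismisses the composition identity as ``a straightforward computation,'' asserts the conclusions $c=b=0$, $a=d\in\mathbb{R}$ from the polynomial identity without the substitution steps you give, and says the center statement ``follows immediately''---so your version supplies detail the paper omits, and in particular your explicit verification that $\ker\Phi$ coincides with the center of $GL(2,\mathbb{H})$ goes beyond what the paper writes out. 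The only place you could tighten is the surjectivity step: the paper has already established in Proposition~\ref{non costante} that $\det_{\mathbb{H}}(A)=0$ forces $L_A$ to be constant, which is exactly the content of your ``rows are left-proportional'' claim, so you can simply cite that rather than re-derive it.
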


In section 4 we extend the structure-theorem of the complex,
fractional, linear transformations to the quaternionic environment and
prove that the group $\mathbb{G}$ is generated by all the
similarities, 
$L(q)=aq+b$ $(a, b \in \mathbb{H}, a\neq 0)$  
and the inversion $R(q)=q^{-1}.$ Moreover, all the elements of 
$\mathbb{G}$ turn out to be conformal.

If the role of the complex cross-ratio is crucial in 
complex, projective geometry, its (real) generalizations to higher dimensions
in $\rr^{n}$ seem not to have a minor role in 
conformal geometry. In fact L. Ahlfors, while studying the conformal structure 
of $\rr^{n}$, has given in \cite{Ah3} 
three different definitions of the cross-ratio of $4$ points of $\mathbb{R}^n$.  The one
that we give here, specialized to the quaternionic case, is new also
with respect to the ones given by Ahlfors. In fact our definition of
cross-ratio has the peculiar
feature that the quaternionic, fractional, linear transformations act on
it transforming its value by (quaternionic) conjugation (see corollary 
\ref{crreale}). We prove, in particular, that

\begin{prop}\label{1.4}
Let $ 
\ratio(q_1,q_2,q_3,q_4):=(q_1-q_3)(q_1-q_4)^{-1}(q_2-q_4)(q_2-q_3)^{-1}$
be the
cross-ratio of the four quaternions $q_1, q_2, q_3, q_4$.  
When the cross ratio of four
quaternions is real, then it is invariant under the action of all
quaternionic, 
fractional, linear transformations.
\end{prop}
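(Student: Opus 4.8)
The plan is to prove the proposition by reducing everything to the two types of generators of the group $\mathbb{G}$ — the similarities $L(q)=aq+b$ and the inversion $R(q)=q^{-1}$ — since the excerpt states in section 4 that these generate all quaternionic, fractional, linear transformations. Because a general element of $\mathbb{G}$ is a composition of such generators, and invariance (up to the claimed behaviour) is preserved under composition, it suffices to check that the cross-ratio $\ratio(q_1,q_2,q_3,q_4)$ transforms correctly under each generator. More precisely, I would verify that under each generator the cross-ratio is sent to its quaternionic conjugate (this is the peculiar feature referenced in corollary \ref{crreale}), and then observe that conjugation fixes precisely the real quaternions; so whenever $\ratio$ is real, it is genuinely invariant.

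First I would treat the similarities. Writing $g(q)=aq+b$, one computes the differences $g(q_i)-g(q_j)=a(q_i-q_j)$, so each factor of the cross-ratio acquires a left factor $a$ and each inverse factor acquires a left factor $a^{-1}$. The point to watch is that quaternionic multiplication is noncommutative, so I must track carefully how these $a$'s and $a^{-1}$'s telescope through the product $(q_1-q_3)(q_1-q_4)^{-1}(q_2-q_4)(q_2-q_3)^{-1}$. I expect the factors to cancel in pairs along the chain, leaving $a\,\ratio\,a^{-1}$, which is a conjugation of $\ratio$ by the fixed quaternion $a$; when $\ratio$ is real it is central and hence invariant. The pure translation part $b$ drops out immediately since it cancels in every difference.

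The more delicate generator is the inversion $R(q)=q^{-1}$. Here I would use the key algebraic identity $q_i^{-1}-q_j^{-1}=q_i^{-1}(q_j-q_i)q_j^{-1}=-\,q_i^{-1}(q_i-q_j)q_j^{-1}$, valid for nonzero quaternions, being careful about the order of factors because of noncommutativity. Substituting this into each of the four differences of the cross-ratio and then forming the required products and inverses, I expect a systematic cancellation of the interior factors $q_i^{-1}q_i$ and $q_jq_j^{-1}$, together with a reversal of order that accounts precisely for the passage to the conjugate. The sign factors $-1$ from each difference must also be tracked, but they will occur an even number of times in the balanced product of two differences and two inverse-differences, so they cancel.

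I anticipate that the main obstacle is the bookkeeping in the inversion step: keeping the noncommutative factors in the correct left-right positions so that the telescoping is legitimate, and confirming that the net effect is exactly conjugation (equivalently, conjugation composed with an inner automorphism), rather than some other rearrangement. Once both generators are handled, the final logical step is short: any $g\in\mathbb{G}$ is a finite composition of generators, under each of which a real cross-ratio is unchanged (conjugation and inner automorphisms both fix $\mathbb{R}\subset\hh$), so by induction on the length of the composition the real cross-ratio is invariant under all of $\mathbb{G}$. I would close by remarking that this simultaneously establishes the stronger, general transformation law by conjugation stated in corollary \ref{crreale}.
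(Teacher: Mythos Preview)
Your approach is essentially the paper's: reduce to generators (the paper uses the four types of Lemma~\ref{struttura di G}, you use the equivalent pair of similarities and inversion), compute how the cross-ratio transforms under each generator (Proposition~\ref{Birconj}), and then observe that real numbers are central so they survive the transformation. That is exactly the route leading to Corollary~\ref{crreale}.

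Two points need correcting before this becomes a proof. First, you write that under each generator ``the cross-ratio is sent to its quaternionic conjugate''; that is false, and inconsistent with your own correct computation for the similarity, which gives $\ratio\mapsto a\,\ratio\,a^{-1}$. The actual transformation law is always an \emph{inner automorphism} $\ratio\mapsto u\,\ratio\,u^{-1}$ for some nonzero $u$ depending on the generator and on the points: $u=a$ for $q\mapsto aq$, and $u=q_3$ for the inversion. The map $\ratio\mapsto\overline{\ratio}$ does not appear. Second, your expectation that the inversion step telescopes via ``cancellation of interior factors $q_i^{-1}q_i$'' is too optimistic: substituting $q_i^{-1}-q_j^{-1}=-q_i^{-1}(q_i-q_j)q_j^{-1}$ into the four factors of $\ratio$ leaves cross-terms such as $q_3^{-1}q_4$ and $q_4^{-1}q_3$ that do not cancel pairwise, and there is no ``reversal of order'' producing $\overline{\ratio}$. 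The identity $\ratio(q_1^{-1},q_2^{-1},q_3^{-1},q_4^{-1})=q_3\,\ratio(q_1,q_2,q_3,q_4)\,q_3^{-1}$ is what the paper asserts; it is a genuine (though routine) algebraic verification, not a visual cancellation. None of this invalidates your overall plan, since inner automorphisms fix $\rr\subset\hh$, but your write-up must state the correct transformation law and carry out the inversion computation honestly.
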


The above result has a great deal of interest in view of the fact that

\begin{prop} \label{1.5}
Four pairwise distinct points $q_1,$$q_2,$$q_3,$$q_4 \in \mathbb{H}$
lie on a same
(one-dimensional) circle or straight line if, and only if, their cross-ratio
is real. 
\end{prop}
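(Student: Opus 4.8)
The plan is to transport the four points by a single M\"obius transformation so that three of them become the standard triple $0,1,\infty$, and then invoke the elementary fact that the unique circle-or-line through $0,1,\infty$ is the real axis $\rr\cup\{\infty\}$. Given the pairwise distinct points $q_2,q_3,q_4$, I would set
\[
T(q)=(q-q_3)(q-q_4)^{-1}(q_2-q_4)(q_2-q_3)^{-1}=\ratio(q,q_2,q_3,q_4).
\]
Moving the constant factor inside the inverse exhibits $T$ in the form $(\alpha q+\beta)(\gamma q+\delta)^{-1}$ with $\alpha=1$, $\beta=-q_3$, $\gamma=(q_2-q_3)(q_2-q_4)^{-1}$ and $\delta=-\gamma q_4$; since the $q_i$ are pairwise distinct the Dieudonn\'e determinant of this matrix is nonzero, so $T\in\mathbb{G}$. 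A direct evaluation gives $T(q_3)=0$, $T(q_4)=\infty$, $T(q_2)=1$ and $T(q_1)=\ratio(q_1,q_2,q_3,q_4)$, so that $T$ normalizes three of the points and stores the cross-ratio as the image of the fourth.

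The decisive geometric input is that every element of $\mathbb{G}$ maps one-dimensional circles and straight lines bijectively onto one-dimensional circles and straight lines. By the generation theorem of Section 4 it suffices to verify this on the generators. The similarities $L(q)=aq+b$ are Euclidean similarities of $\hh\cong\rr^4$ (left multiplication by $a$ is a rotation composed with a dilation, followed by a translation), and these manifestly preserve circles and lines.

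The one genuinely substantial step, and the step I expect to be the main obstacle, is the inversion $R(q)=q^{-1}=\overline{q}/|q|^2$. Writing it as the conjugation $q\mapsto\overline{q}$, a reflection of $\rr^4$ and hence harmless, followed by the inversion $\iota(q)=q/|q|^2$ in the unit sphere $\s^3$, the problem reduces to showing that $\iota$ preserves the family of circles and lines. Here I would substitute $q=\iota(w)=w/|w|^2$ into the scalar equation $\alpha|q|^2+\langle\beta,q\rangle+\gamma=0$ of a generic sphere or hyperplane (with $\alpha,\gamma\in\rr$, $\beta\in\rr^4$); clearing $|w|^2$ turns it into $\gamma|w|^2+\langle\beta,w\rangle+\alpha=0$, an equation of the very same type, so $\iota$ sends each hyperplane or $3$-sphere to a hyperplane or $3$-sphere. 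Realizing a one-dimensional circle as the intersection $S\cap H_1\cap H_2$ of a $3$-sphere with two hyperplanes (a line being the degenerate case in which $S$ is itself a hyperplane) and using that the bijection $\iota$ commutes with intersection, one concludes that $\iota(S\cap H_1\cap H_2)$ is again a one-dimensional circle or line.

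Finally I would assemble the pieces. Applying the previous paragraphs, the $T$-image of any circle or line through $q_2,q_3,q_4$ is a circle or line through $T(q_2),T(q_3),T(q_4)=1,0,\infty$; a circle or line through $\infty$ is a line, and the only line through $0$ and $1$ is the real axis, whence this image is exactly $\rr\cup\{\infty\}$, the unique circle-or-line determined by the triple $1,0,\infty$. Since $T$ and $T^{-1}$ are circle/line preserving bijections, the points $q_1,q_2,q_3,q_4$ are concyclic or collinear if and only if $T(q_1)$ lies on $\rr\cup\{\infty\}$; and because $q_1\neq q_4$ forces $T(q_1)\neq\infty$, this holds precisely when $T(q_1)=\ratio(q_1,q_2,q_3,q_4)\in\rr$, establishing both implications.
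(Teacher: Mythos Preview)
Your argument is correct and follows essentially the same strategy as the paper's proof of Theorem~\ref{birreal}: send three of the points to $0,1,\infty$ by a fractional linear transformation, invoke the circle/line preservation property of $\mathbb{G}$ (the paper's Corollary~\ref{cfreret}, which you re-derive from the generators), and observe that the unique circle-or-line through $0,1,\infty$ is $\rr\cup\{\infty\}$.

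The one genuine difference is in the choice of transformation. The paper takes an arbitrary $L$ from Proposition~\ref{0 1 infinito} sending $(q_2,q_3,q_4)\mapsto(1,0,\infty)$; then $L(q_1)$ is only \emph{conjugate} to $\ratio(q_1,q_2,q_3,q_4)$ (via Proposition~\ref{Birconj}), and a separate observation that conjugation preserves reality closes the argument. You instead take the specific map $T(q)=\ratio(q,q_2,q_3,q_4)$, check directly that it lies in $\mathbb{G}$, and exploit that $T(q_1)$ \emph{equals} the cross-ratio on the nose. This sidesteps the conjugation lemma entirely and is a slight streamlining; the paper's route, on the other hand, reuses already-established machinery and avoids verifying by hand that $T$ has nonzero Dieudonn\'e determinant. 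Your inline rederivation of Corollary~\ref{cfreret} is correct but redundant within the paper, where that result is already available.
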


When A.F. M\"{o}bius introduced the notion of what we call nowadays a 
fractional, linear
transformation, 
what he had in mind was only a 
homemorphism of the extended, complex plane $\mathbb{C} \cup \{ \infty \}$ 
onto itself which maps 
circles onto circles. Adopting this point of view, still in section
\ref{quattro} we define the families $\mathcal{F}_i,$ for $i=3,2,1,$ respectively 
as $\mathcal{F}_i=\mathcal{S}_i \cup \mathcal{P}_i$ where $\mathcal{S}_i$ 
is the family of all $i-$(real) dimensional spheres and $\mathcal{P}_i$ is the family 
of all $i-$(real) dimensional affine subspaces of $\mathbb{H}.$ Then
we give an original proof of the fact that

\begin{teo}
The  group  $\mathbb{G}$ of all quaternionic, fractional, linear transformations maps
elements of $\mathcal{F}_i$ onto elements of  $\mathcal{F}_i$, for
$i=3,2,1$.
\end{teo}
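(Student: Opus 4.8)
The plan is to reduce the statement to the generators of $\mathbb{G}$ and then treat the inversion by first settling the top-dimensional case and bootstrapping downwards. Recall from the preceding results of this section that $\mathbb{G}$ is generated by the similarities $L(q)=aq+b$ (with $a,b\in\hh$, $a\neq 0$) together with the inversion $R(q)=q^{-1}$, and that every element of $\mathbb{G}$ is conformal. Since the property of mapping $\mathcal{F}_i$ into $\mathcal{F}_i$ is stable under composition, and since each generator lies in $\mathbb{G}$ together with its inverse, it suffices to prove that each generator sends every element of $\mathcal{F}_i$ to an element of $\mathcal{F}_i$; applying the same to the inverse maps then upgrades ``into'' to ``onto''.

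For a similarity $L(q)=aq+b$, I would write $a=|a|\,u$ with $u$ a unit quaternion. Then $q\mapsto aq$ is the composition of the dilation by the factor $|a|>0$ with left multiplication by $u$, and the latter is an orthogonal $\rr$-linear transformation of $\hh\cong\rr^{4}$. Hence $L$ is a Euclidean similarity of $\rr^{4}$ followed by the translation by $b$, and such maps visibly carry $i$-dimensional affine subspaces to $i$-dimensional affine subspaces and $i$-spheres to $i$-spheres. This disposes of $\mathcal{S}_i\cup\mathcal{P}_i$ for every $i$.

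The real content is the inversion, which I would handle by passing to $\widehat{\hh}=\hh\cup\{\infty\}$ and decomposing $R(q)=q^{-1}=\overline{q}/|q|^{2}$ as the orthogonal reflection $q\mapsto\overline{q}$ composed with the geometric inversion $\iota(q)=q/|q|^{2}$ in the unit sphere; since conjugation is orthogonal it trivially preserves each $\mathcal{F}_i$, so only $\iota$ needs to be examined. For the base case $i=3$, I would describe every element of $\mathcal{F}_3$ as the zero locus of
\[
A\,|q|^{2}+\langle B,q\rangle+C=0,\qquad A,C\in\rr,\ B\in\hh,\ \langle B,q\rangle=\Re e(B\overline{q}),
\]
where $A=0$ gives the hyperplanes of $\mathcal{P}_3$ and $A\neq 0$ the hyperspheres of $\mathcal{S}_3$. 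Substituting $q=p/|p|^{2}$ and clearing the denominator $|p|^{2}$ turns this into $A+\langle B,p\rangle+C\,|p|^{2}=0$, an equation of the same shape with the roles of $A$ and $C$ interchanged; hence $\iota(\mathcal{F}_3)=\mathcal{F}_3$, a hyperplane not through the origin being exchanged with a hypersphere through the origin and conversely, consistently with the bookkeeping $0\leftrightarrow\infty$.

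For the lower-dimensional cases I would use that an element of $\mathcal{F}_i$ has codimension $4-i$ in $\hh$ and can be realized as a transversal intersection of $4-i$ elements of $\mathcal{F}_3$ (two hyperspheres or hyperplanes for $i=2$, three for $i=1$). As a bijection of $\widehat{\hh}$ the inversion commutes with intersection, so it sends such an intersection to the intersection of the $R$-images, each of which lies in $\mathcal{F}_3$ by the base case; because $R$ is conformal it is a diffeomorphism preserving both the dimension of smooth submanifolds and the transversality of the intersection, so the image is again an $i$-dimensional transversal intersection of members of $\mathcal{F}_3$, that is, an element of $\mathcal{F}_i$. The main obstacle is precisely this last point: verifying that a transversal intersection of $4-i$ round hyperspheres and hyperplanes is always a genuine element of $\mathcal{F}_i$ and never a more exotic set, and handling the degenerate and empty configurations together with the point at infinity; this is where the conformality of $R$ and the completion $\widehat{\hh}$ do the essential work. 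As an independent and cleaner route for $i=1$, I would instead invoke Propositions \ref{1.4} and \ref{1.5}: three distinct points determine a unique circle or line, and since for any fourth point on that circle or line the cross-ratio is real and remains real under $\mathbb{G}$, its value being merely conjugated, every point of the image lies on the unique circle or line through the images of the three chosen points, giving $g(C)\in\mathcal{F}_1$ directly.
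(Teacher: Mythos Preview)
Your proof is correct and follows essentially the same route as the paper: reduce to the generators of $\mathbb{G}$, handle the case $i=3$ by showing each generator preserves the family of quadratic equations $A|q|^{2}+2\Re e(B\overline{q})+C=0$, and then obtain $i=2,1$ by writing lower-dimensional spheres and affine subspaces as intersections of members of $\mathcal{F}_3$. The paper carries out the equation check for each of the four generator types $L_1,\ldots,L_4$ separately rather than factoring $R(q)=q^{-1}$ through $q\mapsto\overline{q}$ and $\iota(q)=q/|q|^{2}$ as you do, but this is only a cosmetic difference. Two points are worth noting: first, your discussion of transversality and dimension preservation for the intersection step is more careful than the paper, which simply asserts that elements of $\mathcal{F}_2$ and $\mathcal{F}_1$ arise as finite intersections of elements of $\mathcal{F}_3$ and leaves it at that; second, your alternative cross-ratio argument for $i=1$ via Propositions~\ref{1.4} and~\ref{1.5} is not used in the paper's proof of this theorem, though it is a perfectly good independent route and the paper does exploit exactly this idea later when proving Theorem~\ref{birreal}.
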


The aim of section 5 is to find a geometric approach to the definition of the quaternionic Poincar\'e distance on  
$\Delta_{\hh}$ (often simply called Poincar\'e distance when no confusion can arise). To this aim we adopt the point of view used by C. L. Siegel,
\cite{Siegel}, for the homologous problem in the complex case and use the terminology introduced by Ahlfors in \cite{Ah3} . In fact,
 to start with,  we define the non-Euclidean line through two points
$q_{1}, q_{2}$ as
the unique circle, or diameter, containing the two
points and intersecting $\partial
\Delta_{\hh}$ orthogonally in the
two ends  $q_{3}, q_{4}$. The Poincar\'e distance of  $\Delta=\Delta_{\hh}$ is then defined by 
\begin{equation}\label{poincintr}
\delta_{\Delta}(q_{1},
q_{2})=\frac{1}{2}\log (\ratio(q_{1}, q_{2}, q_{3}, q_{4}))
\end{equation}
where 
the four points are arranged cyclically on the non-Euclidean line
through $q_{1}$ and $q_{2}$. Notice that on each complex plane
$L_{I}=\rr+I\rr$ (for any imaginary unit $I$) the quaternionic Poincar\'e
distance coincides with the classical Poincar\'e distance of $\Delta_{I}=\Delta_{\hh}\cap L_{I}$. 

The structure of the group
$\mathbb{M}$ of  M\"{o}bius transformations
 of  $\Delta_\mathbb{H}$
 is studied, for example,  in  \cite{CPW}, in terms of the (classical) group $Sp(1, 1)$.
If $H=\left[ \begin{array}{rr}

             1 & 0 \\
             0 & -1 \\
             \end{array} \right]$, the group $Sp(1,1)$ is defined (see, e.g., \cite{GOV}) as 
\begin{equation}\label{sp}
    Sp(1,1)=\left\{  A\in GL(2, \mathbb{H})\  :\ \  ^{t}\overline{A}HA=H 
    \right\}
\end{equation}
and it can be written equivalently as (see, e.g., \cite{CPW})
\begin{equation*}
    Sp(1,1)=\left\{  \left[ \begin{array}{ll}

             a & b \\
             c & d \\
             \end{array} \right] : |a|=|d|, \ \ |b|=|c|, \ \  |a|^2 -|c|^2 =1, 
	     \ \ 
\overline{a}b=\overline{c}d, \ \ a\overline{c}=b\overline{d}
    \right\}.
\end{equation*}
It allows to rephrase and complete a result of \cite{CPW} as
follows:
\begin{teo} \label{nice}
The quaternionic, fractional, linear transformation defined by $g(q)=(aq+b)(cq+d)^{-1}$  is a M\"obius 
transformation of $ \Delta_{\mathbb{H}}$ if and only if $\left[ \begin{array}{ll}

             a & b \\
             c & d \\
             \end{array} \right] \in Sp(1,1)$.
  Moreover the map 
\begin{eqnarray}
   &\phi: Sp(1,1) \to \mathbb{M} \nonumber \\
   &A=\left[ \begin{array}{ll}
             a & b \\
             c & d \\
             \end{array} \right]\mapsto
L_{A}(q)=(aq+b)\cdot(cq+d)^{-1}
    \end{eqnarray}
    is a group homomorphism 
    whose kernel is the center of $Sp(1,1)$, that is the
    subgroup 
    $$\left\{\pm \left[ \begin{array}{ll}

             1& 0 \\
             0 & 1\\
             \end{array} \right] 
\right\}.
	     $$
\end{teo}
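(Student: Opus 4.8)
The plan is to encode membership in $\mathbb{M}$ through the indefinite Hermitian form attached to $H$ and to reduce both implications to a single rigidity statement about its null cone. Set $A^{*}:={}^{t}\overline{A}$ and, for $v=(v_{1},v_{2})^{t}\in\hh^{2}$, put $Q(v):=v^{*}Hv=|v_{1}|^{2}-|v_{2}|^{2}$. The point is that in the affine chart $v=(q,1)^{t}$ one has $Q(v)=|q|^{2}-1$, so that $\Delta_{\hh}$, $\partial\Delta_{\hh}$ and the exterior correspond respectively to $Q<0$, $Q=0$ and $Q>0$; moreover $Q$ is well defined up to a positive factor on the quaternionic projective line, since $Q(v\mu)=|\mu|^{2}Q(v)$ for $\mu\in\hh\setminus\{0\}$. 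The matrix $A$ acts projectively by $v\mapsto Av$, and in the chart this action is exactly $q\mapsto L_{A}(q)=(aq+b)(cq+d)^{-1}$. Writing $aq+b=L_{A}(q)\,(cq+d)$ and using multiplicativity of the modulus gives the basic identity $Q(Av)=|cq+d|^{2}\big(|L_{A}(q)|^{2}-1\big)$, which will be the computational backbone of both directions.

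For the ``if'' part, assume $A\in Sp(1,1)$, i.e. $A^{*}HA=H$. Then $Q(Av)=v^{*}A^{*}HAv=Q(v)$ for every $v$. First I would check that $cq+d\neq0$ for $q\in\overline{\Delta_{\hh}}$: otherwise $Av=(aq+b,0)^{t}$ would give $0\le|aq+b|^{2}=Q(Av)=Q(v)=|q|^{2}-1\le0$, forcing $Av=0$ and contradicting the invertibility of $A$. Hence $L_{A}$ is defined on $\overline{\Delta_{\hh}}$, and the basic identity yields $|L_{A}(q)|^{2}-1=(|q|^{2}-1)/|cq+d|^{2}$, so $L_{A}$ preserves the sign of $|q|^{2}-1$ and maps $\Delta_{\hh}$ into itself. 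Since $A^{-1}\in Sp(1,1)$ as well, the same applies to $L_{A}^{-1}=L_{A^{-1}}$, whence $L_{A}(\Delta_{\hh})=\Delta_{\hh}$ and $L_{A}\in\mathbb{M}$.

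The harder ``only if'' part is where I expect the main obstacle. Assume $L_{A}\in\mathbb{M}$. Being the restriction of a homeomorphism of $\hh\cup\{\infty\}$ that preserves the open disc, $L_{A}$ must biject the boundary, so $A$ maps the null cone $\mathcal{N}=\{Q=0\}$ onto itself (note every nonzero null vector has $v_{2}\neq0$ and is a real multiple of some $(q,1)^{t}$ with $|q|=1$). Then $Q'(v):=v^{*}(A^{*}HA)v=Q(Av)$ is a real quadratic form on $\hh^{2}\cong\rr^{8}$ vanishing on $\mathcal{N}$. Now $Q$ has real signature $(4,4)$, hence is an irreducible, nondegenerate quadratic form, and its projective zero locus is an irreducible quadric whose ideal is generated by $Q$; a degree-two form vanishing on it must therefore be a real multiple of $Q$. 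This rigidity gives $A^{*}HA=\rho H$ for some $\rho\in\rr\setminus\{0\}$. The sign of $\rho$ is pinned down by the basic identity together with $L_{A}(\Delta_{\hh})\subseteq\Delta_{\hh}$: for $|q|<1$ one needs $|L_{A}(q)|^{2}-1<0$, i.e. $\rho>0$. Rescaling $A$ by the real factor $1/\sqrt{\rho}$ (which does not change $L_{A}$, by the Theorem of Section 3) we may assume $A^{*}HA=H$, that is $A\in Sp(1,1)$.

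Finally, to match the five explicit relations and to settle the homomorphism statement: expanding $A^{*}HA=H$ directly yields $|a|^{2}-|c|^{2}=1$ and $\overline{a}b=\overline{c}d$, and since $A$ is invertible one also has $AHA^{*}=H$ (because $A^{*}=HA^{-1}H$), which gives $|a|^{2}-|b|^{2}=1$ and $a\overline{c}=b\overline{d}$; combining the two sets of relations forces $|a|=|d|$ and $|b|=|c|$, recovering the characterization of $Sp(1,1)$ quoted above. The map $\phi$ is simply the restriction of the homomorphism $\Phi$ of Section 3 to the subgroup $Sp(1,1)$, so it is a homomorphism, and the equivalence just proved shows its image is all of $\mathbb{M}$. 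Its kernel is $Sp(1,1)\cap\ker\Phi=Sp(1,1)\cap\{tI:t\in\rr\setminus\{0\}\}$; imposing $(tI)^{*}H(tI)=t^{2}H=H$ gives $t=\pm1$, so $\ker\phi=\{\pm I\}$, which a short direct check identifies with the center of $Sp(1,1)$. The one delicate point throughout is the null-cone rigidity in the noncommutative, quaternionic Hermitian setting; everything else is bookkeeping built on the basic identity.
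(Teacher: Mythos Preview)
Your argument is correct. Note, however, that the paper does \emph{not} supply its own proof of this theorem: in Section~5 it is stated as a rephrasing and completion of a result from \cite{CPW}, and the paper then proceeds to use it (for Theorem~\ref{teofonda}) without further justification. So there is nothing in the paper to compare your approach against, and what you have written stands on its own as a self-contained proof.

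Two comments on the execution. First, the ``delicate'' rigidity step is in fact easier than the projective-quadric language you invoke. The matrix $B:=A^{*}HA$ is automatically quaternionic Hermitian, so you do not need the full statement that every real quadratic form vanishing on the null cone of a signature $(4,4)$ form is proportional to it; you only need it for Hermitian forms. Writing $B=\begin{pmatrix}\alpha&\beta\\ \overline{\beta}&\delta\end{pmatrix}$ with $\alpha,\delta\in\rr$, one has $v^{*}Bv=\alpha|v_{1}|^{2}+\delta|v_{2}|^{2}+2\Re(\overline{v_{1}}\beta v_{2})$. Evaluating on $v=(1,u)^{t}$ with $|u|=1$ and using $u\mapsto -u$ gives $\alpha+\delta=0$ and $\Re(\beta u)=0$ for all unit $u$, hence $\beta=0$ and $B=\alpha H$; no algebraic geometry is needed. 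Second, under the paper's standing convention (stated after Corollary~\ref{homoSl}) that $A\in SL(2,\hh)$, the rescaling step is automatic: taking $det_{\hh}$ of $A^{*}HA=\rho H$ and using $det_{\hh}(A^{*})=det_{\hh}(A)$ and Binet (Proposition~\ref{Binet}) forces $\rho^{2}=1$, so $\rho=1$.

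The remaining parts---surjectivity of $\phi$ onto $\mathbb{M}$ via the ``if/only if'' equivalence, the homomorphism property as a restriction of $\Phi$ from Theorem~\ref{homo}, and the kernel computation $Sp(1,1)\cap\{tI_{2}:t\in\rr\setminus\{0\}\}=\{\pm I_{2}\}$---are handled correctly. Your identification of the kernel with the center of $Sp(1,1)$ is indeed a short check: commuting with $H$ forces a diagonal matrix, commuting with $\mathrm{diag}(u,u)$ for $|u|=1$ forces real diagonal entries, and commuting with the real hyperbolic rotations $\begin{pmatrix}\cosh t&\sinh t\\ \sinh t&\cosh t\end{pmatrix}\in Sp(1,1)$ forces the two diagonal entries to agree.
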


\noindent By means of the statement of theorem \ref{nice} we are
able to obtain, for the
quaternionic 
M\"obius transformations, a characterization  which closely resembles
the classical
representation of the complex M\"obius transformations. A similar
result is stated without proof in \cite{HJ}.
\begin{teo}\label{ultimo}
Each quaternionic M\"{o}bius transformation
$g(q)=(aq+b)\cdot(cq+d)^{-1}\in \mathbb{M}$
can be
written uniquely as:
\begin{equation} 
g(q)=\alpha (q - q_0)(1-\overline{q_0}q)^{-1}  \beta^{-1}
\end{equation}
where $q_0 = -a^{-1} b \in \Delta_{\mathbb{H}}$ and
where
$\alpha =\dfrac{a}{|a|}\in \partial \Delta_{\mathbb{H}}$, 
$\beta = \dfrac{d}{|d|} \in \partial \Delta_{\mathbb{H}}.$ 
\end{teo}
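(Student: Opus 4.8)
The plan is to lean on Theorem~\ref{nice}: membership $g\in\mathbb{M}$ forces the associated matrix $A$ to lie in $Sp(1,1)$, which puts at our disposal the five relations $|a|=|d|$, $|b|=|c|$, $|a|^2-|c|^2=1$, $\overline{a}b=\overline{c}d$ and $a\overline{c}=b\overline{d}$. I would prove \emph{existence} as a single algebraic identity, starting from the proposed right-hand side $\alpha(q-q_0)(1-\overline{q_0}q)^{-1}\beta^{-1}$ and transforming it, factor by factor, into $(aq+b)(cq+d)^{-1}$, being careful throughout never to commute non-real quaternions.

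First I would check that the parameters sit where claimed. From $|a|^2-|c|^2=1$ we get $|c|<|a|$, so $a\neq0$ and $|q_0|=|a^{-1}b|=|b|/|a|=|c|/|a|<1$, giving $q_0\in\Delta_{\mathbb{H}}$; moreover $|\alpha|=|\beta|=1$, and $d\neq0$ since $|d|=|a|$. The numerator is the easy half: because $q-q_0=a^{-1}(aq+b)$ and $\alpha a^{-1}=\tfrac{1}{|a|}$, one finds $\alpha(q-q_0)=\tfrac{1}{|a|}(aq+b)$. The denominator is the crux. Since $|a|=|d|$, one has $|a|\beta=d$, so it suffices to show $d(1-\overline{q_0}q)=cq+d$, i.e. that $c=-d\,\overline{q_0}$. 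Computing $\overline{q_0}=-\overline{b}\,a/|a|^2$ reduces this to $c\overline{a}=d\overline{b}$, which upon conjugation is precisely the $Sp(1,1)$ relation $a\overline{c}=b\overline{d}$. This single relation is therefore the true engine of the argument.

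Combining the two computations gives $(cq+d)^{-1}=\tfrac{1}{|a|}(1-\overline{q_0}q)^{-1}\beta^{-1}$, and left multiplication by $(aq+b)$ yields the asserted identity $g(q)=\alpha(q-q_0)(1-\overline{q_0}q)^{-1}\beta^{-1}$. For \emph{uniqueness} I would argue as follows. Every element of $\mathbb{M}$ is a bijection of $\Delta_{\mathbb{H}}$ and the displayed form sends $q_0$ to $0$, so $q_0=g^{-1}(0)$ is forced. Writing a second representation with the same $q_0$ and setting $\psi(q)=(q-q_0)(1-\overline{q_0}q)^{-1}$, which is itself a surjection of $\Delta_{\mathbb{H}}$ onto itself, equality collapses to $u\,\psi=\psi\,v$ for all $\psi\in\Delta_{\mathbb{H}}$, with $u=\alpha'^{-1}\alpha$ and $v=\beta'^{-1}\beta$ unit quaternions. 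Taking $\psi$ real and nonzero gives $u=v$; then $u$ commutes with an open ball of quaternions, forcing $u=v\in\mathbb{R}$ and hence $u=v=\pm1$. This determines $(\alpha,\beta)$ up to a simultaneous sign, exactly matching the kernel $\{\pm I\}$ of $\phi$. The main obstacle I anticipate is the denominator step: since right multiplication by $\beta^{-1}$ is \emph{not} itself a fractional linear transformation, the factor $\beta^{-1}$ cannot be disposed of by composing matrices and must instead be absorbed globally through the identity $c=-d\,\overline{q_0}$, which is what makes the non-commutative bookkeeping delicate.
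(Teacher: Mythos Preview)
Your proof is correct and takes a genuinely different route from the paper. The paper offers two arguments: (i) a hyperbolic parametrization $a=\alpha\cosh t$, $b=\gamma\sinh t$, $c=\delta\sinh t$, $d=\beta\cosh t$ (forced by $|a|^2-|c|^2=|d|^2-|b|^2=1$), followed by the observation that $\overline{a}b=\overline{c}d$ gives $\alpha^{-1}\gamma=\delta^{-1}\beta=:u$, after which the formula falls out by factoring $\cosh t$ from each bracket; and (ii) a geometric reduction: first classify elements of $\mathbb{M}$ fixing $0$ as $q\mapsto aqd^{-1}$ with $|a|=|d|=1$, then for general $g$ compose with $h(q)=(q+p_0)(1+\overline{p_0}q)^{-1}$ where $p_0=-g(0)$, so that $h\circ g$ fixes $0$.

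Your approach is a third way: a direct algebraic verification, factoring numerator and denominator separately and reducing the denominator identity $c=-d\,\overline{q_0}$ to $c\overline{a}=d\overline{b}$, i.e.\ the conjugate of the $Sp(1,1)$ relation $a\overline{c}=b\overline{d}$. This is the most economical of the three: it isolates exactly which $Sp(1,1)$ relation does the real work and uses the others ($|a|=|d|$, $|b|=|c|$) only for scalar bookkeeping. The hyperbolic parametrization buys a cleaner picture of the moduli ($\tanh t=|q_0|$), while the isotropy argument is the one that transports most readily to other homogeneous domains. You also treat uniqueness explicitly (up to a simultaneous sign in $(\alpha,\beta)$, matching $\ker\phi=\{\pm I\}$), whereas the paper's proof leaves uniqueness implicit.

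One small inaccuracy in your closing remark: right multiplication $q\mapsto q\beta^{-1}=(1\cdot q+0)(0\cdot q+\beta)^{-1}$ \emph{is} a fractional linear transformation in the sense of Definition~\ref{defFLT}. This does not affect your proof, only your commentary on why the bookkeeping felt delicate.
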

\noindent The description of the group of all M\"obius 
transformations of $\Delta_{\hh}$ given in theorem \ref{ultimo} is different from the one
given in a more general setting in \cite{Ah3}.
\noindent Using propositions \ref{1.4} and \ref{1.5} we sum up by proving the following

\begin{prop} The Poincar\'e distance of
    $\Delta_{\mathbb{H}}$ is invariant under the action of the group
    of all  M\"obius transformations $\mathbb{M}$ and of the map $q
    \mapsto \overline{q}$. 
\end{prop}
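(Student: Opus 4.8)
The plan is to deduce the invariance of $\delta_\Delta$ from the invariance of the cross-ratio, via the defining formula (\ref{poincintr}). Fix $q_1,q_2\in\Delta_\mathbb{H}$ and let $C$ be the non-Euclidean line through them, with ends $q_3,q_4\in\partial\Delta_\mathbb{H}$. For either a M\"obius transformation $g\in\mathbb{M}$ or the conjugation $c(q)=\overline q$, I would first verify that the map sends $C$ to the non-Euclidean line through the two image points, carrying ends to ends; since $q_1,q_2,q_3,q_4$ are concyclic their cross-ratio is real by Proposition \ref{1.5}, so it then suffices to show that the real number $\ratio(q_1,q_2,q_3,q_4)$ is unchanged, whereupon (\ref{poincintr}) yields equal distances.

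For $g\in\mathbb{M}$ this is quick. By Theorem \ref{nice}, $g$ preserves $\Delta_\mathbb{H}$ and hence $\partial\Delta_\mathbb{H}$; by the transformation theorem of Section \ref{quattro} it maps the circle $C\in\mathcal{F}_1$ to an element of $\mathcal{F}_1$; and, being conformal (Section \ref{quattro}), it preserves the orthogonality with $\partial\Delta_\mathbb{H}$. Thus $g(C)$ is the non-Euclidean line through $g(q_1),g(q_2)$ with ends $g(q_3),g(q_4)$, in the same cyclic order. As $\mathbb{M}\subseteq\mathbb{G}$ and the cross-ratio is real, Proposition \ref{1.4} gives $\ratio(g(q_1),g(q_2),g(q_3),g(q_4))=\ratio(q_1,q_2,q_3,q_4)$, which is the claim.

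The conjugation is the delicate case, and I expect it to be the main obstacle. As a Euclidean isometry with $|\overline q|=|q|$, $c$ preserves $\Delta_\mathbb{H}$ and $\partial\Delta_\mathbb{H}$, maps circles to circles, and is anti-conformal, hence preserves orthogonality; so it carries $C$ to the non-Euclidean line through $\overline{q_1},\overline{q_2}$ with ends $\overline{q_3},\overline{q_4}$. What remains is to prove $\ratio(\overline{q_1},\overline{q_2},\overline{q_3},\overline{q_4})=\ratio(q_1,q_2,q_3,q_4)$. Here Proposition \ref{1.4} is unavailable, since $c\notin\mathbb{G}$; moreover, because $\mathbb{H}$ is noncommutative, the naive identity $\overline{a_1}\,\overline{a_2}\,\overline{a_3}\,\overline{a_4}=a_1a_2a_3a_4$ fails even when $a_1a_2a_3a_4\in\mathbb{R}$, so merely knowing the cross-ratio is real is not enough: one must use genuine concyclicity.

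I would resolve this by reducing to a complex slice. The non-Euclidean line lies in the real $2$-plane $\Pi=\mathrm{span}_\mathbb{R}(q_1,q_2)$: indeed the geodesic circle through $q_1,q_2$ orthogonal to $\partial\Delta_\mathbb{H}$ also passes through the reflected points $q_1/|q_1|^2,\,q_2/|q_2|^2$, so its supporting plane contains the origin (the degenerate case of a diameter being handled by taking any slice through it). Writing $\Pi=uL_I$ for a unit quaternion $u$ and an imaginary unit $I$, one has $q_k=uz_k$ with $z_k\in L_I$. The left translation $q\mapsto\overline u q$ lies in $\mathbb{G}$, so by Proposition \ref{1.4} it identifies $\ratio(q_1,q_2,q_3,q_4)$ with the cross-ratio of the concyclic points $z_k$ inside the slice $L_I\cong\mathbb{C}$, where it equals the classical complex cross-ratio, a real number $r$. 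For the conjugates, $\overline{q_k}=\overline u\,(u\,\overline{z_k}\,u^{-1})$ with $w_k:=u\,\overline{z_k}\,u^{-1}$ in the rotated slice $L_{I'}$, $I'=uIu^{-1}$; applying $p\mapsto\overline u p\in\mathbb{G}$ once more reduces $\ratio(\overline{q_1},\ldots,\overline{q_4})$ to the complex cross-ratio of the $w_k$ in $L_{I'}$. Since $w\mapsto uwu^{-1}$ is a field isomorphism $L_I\to L_{I'}$ fixing $\mathbb{R}$, and since in $L_I\cong\mathbb{C}$ conjugation sends the cross-ratio to its complex conjugate, this last value is $\overline r=r$. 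Hence both cross-ratios equal $r$ and $\delta_\Delta(\overline{q_1},\overline{q_2})=\delta_\Delta(q_1,q_2)$. I would close by checking that in both cases the ends are matched so that the cyclic arrangement (equivalently the normalization $\ratio\ge1$) agrees on the two sides, so the same branch of $\log$ is used.
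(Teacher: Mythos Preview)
Your argument follows the paper's proof closely: both show that the map in question carries non-Euclidean lines to non-Euclidean lines, ends to ends, and then invoke invariance of the real cross-ratio to conclude via the defining formula. For $g\in\mathbb{M}$ your treatment is identical to the paper's. For the conjugation $q\mapsto\overline q$, the paper simply \emph{asserts} that ``the cross-ratio, when real, is invariant \ldots\ with respect to $q\mapsto\overline q$'' without justification; you correctly recognize that this is not an immediate algebraic identity (since $\overline a\,\overline b\,\overline c\,\overline d\neq\overline{abcd}$ in $\mathbb{H}$) and supply a valid proof by reducing to a complex slice $L_I$, using that the non-Euclidean line lies in a $2$-plane through the origin (which the paper records in Theorem~\ref{quattropunti}). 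One small slip: when you write ``applying $p\mapsto\overline u\,p$ once more,'' the map that takes $\overline{q_k}=\overline u\,w_k$ to $w_k$ is $p\mapsto u\,p$, not $p\mapsto\overline u\,p$; with that correction the argument is sound.
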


It is now possible to mimic 
the definition of the classical, complex
Poincar\'e differential metric of $\mathbb{D} \subset \mathbb{C}$ 
to set the length of the vector $\tau
\in \hh$  for the Poincar\'e metric at $q\in \Delta_{\hh}$ to be the
number:
\begin{equation}\label{poinmetricintr}
\langle \tau \rangle_{q} = \frac{|\tau|}{1-|q|^{2}}.
\end{equation}
Formula (\ref{poinmetricintr}) leads now to 
the definition of the (square of the) Poincar\'e length element at $q\in
\Delta_{\hh}$:
\begin{equation*}
ds^2= \frac{|d_I q|^2}{(1-|q|^2)^2}
\end{equation*}
where $q=x+yI$ and $d_I q= dx + I dy$ (for $I\in \mathbb{S}$).
The quaternionic Poincar\'e differential metric given above can also
be obtained by specializing to the case of quaternions the definition 
given in the more general setting of the study of conformal geometry
of $\rr^{n}$ by Ahlfors, \cite{Ah3}.
At the end of section 5, the following results are proved:

\begin{teo} All the elements of
    the group $\mathbb{M}$ of  M\"obius transformations  of
    $\Delta_{\hh}$, as well as the map $q \mapsto \overline{q}$, 
    leave the Poincar\'e differential metric
    invariant.
\end{teo}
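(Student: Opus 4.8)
The plan is to verify directly that each generator preserves the Poincar\'e length of tangent vectors, i.e. that $\langle dg_q(\tau)\rangle_{g(q)}=\langle\tau\rangle_q$, which by (\ref{poinmetricintr}) amounts to
\begin{equation*}
\frac{|dg_q(\tau)|}{1-|g(q)|^2}=\frac{|\tau|}{1-|q|^2}
\end{equation*}
for every $q\in\Delta_{\hh}$ and every $\tau\in\hh$. Since this identity is stable under composition (by the chain rule, if two maps sending $\Delta_{\hh}$ into itself satisfy it, so does their composite), it suffices to check it on a convenient set of factors. By Theorem \ref{ultimo} every $g\in\mathbb{M}$ factors as $g=\Psi\circ M_{q_0}$, where $M_{q_0}(q)=(q-q_0)(1-\overline{q_0}q)^{-1}$ with $q_0\in\Delta_{\hh}$, and $\Psi(w)=\alpha w\beta^{-1}$ with $\alpha,\beta\in\partial\Delta_{\hh}$; both factors map $\Delta_{\hh}$ into itself. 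Thus I only need to treat $\Psi$, the basic map $M_{q_0}$, and the conjugation $q\mapsto\overline{q}$.

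The maps $\Psi$ and $q\mapsto\overline{q}$ are the easy cases. Each is an $\rr$-linear self-map of $\hh\cong\rr^4$ that preserves the quaternionic norm, since $|\alpha w\beta^{-1}|=|w|$ and $|\overline{w}|=|w|$ (because $|\alpha|=|\beta|=1$). Hence each is a Euclidean isometry whose differential, being the linear map itself, preserves $|\tau|$, and each preserves $|q|$; so both numerator and denominator of the displayed ratio are left unchanged and the metric is invariant. The entire content therefore lies in the map $M_{q_0}$.

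For $M_{q_0}$ I would establish two identities. The first is the norm identity
\begin{equation*}
1-|M_{q_0}(q)|^2=\frac{(1-|q|^2)(1-|q_0|^2)}{|1-\overline{q_0}q|^2},
\end{equation*}
obtained from the multiplicativity of the quaternionic norm, $|M_{q_0}(q)|=|q-q_0|/|1-\overline{q_0}q|$, by expanding $|1-\overline{q_0}q|^2-|q-q_0|^2$ and using $\Re e(ab)=\Re e(ba)$ together with $q\overline{q}=|q|^2$. The second is the differential identity. Writing $D=1-\overline{q_0}q$ and differentiating $M_{q_0}(q)=(q-q_0)D^{-1}$ in the direction $\tau$, with the quaternionic rules $dD=-\overline{q_0}\tau$ and $d(D^{-1})=-D^{-1}(dD)D^{-1}$, one gets $dM_{q_0}(\tau)=\tau D^{-1}+(q-q_0)D^{-1}\overline{q_0}\tau D^{-1}$. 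Since $\tau$ sits on the left and $D^{-1}$ on the right of both terms, this factors as $dM_{q_0}(\tau)=C\tau D^{-1}$ with $C=1+(q-q_0)D^{-1}\overline{q_0}$. Using $D^{-1}=\overline{D}/|D|^2$ and the expansion of $|D|^2$ from the first identity, a direct simplification reduces $C$ to $C=(1-|q_0|^2)(1-q_0\overline{q})/|D|^2$; since $|1-q_0\overline{q}|=|D|$, this gives $|C|=(1-|q_0|^2)/|D|$ and hence
\begin{equation*}
|dM_{q_0}(\tau)|=\frac{|C|\,|\tau|}{|D|}=\frac{(1-|q_0|^2)\,|\tau|}{|1-\overline{q_0}q|^2}.
\end{equation*}
Dividing this by the norm identity yields exactly $|dM_{q_0}(\tau)|/(1-|M_{q_0}(q)|^2)=|\tau|/(1-|q|^2)$, completing the argument.

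The step I expect to be the main obstacle is the differential computation for $M_{q_0}$: because $\hh$ is non-commutative, the product and quotient rules must be applied with attention to the order of the factors, and one must check that $\tau$ can indeed be pulled out on a single side so that $|dM_{q_0}(\tau)|$ depends on $\tau$ only through $|\tau|$. This isotropy is in any case guaranteed a priori by the conformality of the elements of $\mathbb{G}$ proved earlier, so the real work is the algebraic simplification of $|C|$, where the identity $|1-q_0\overline{q}|=|1-\overline{q_0}q|$ is precisely what makes the conformal factor collapse to the expected value $(1-|q_0|^2)/|1-\overline{q_0}q|^2$.
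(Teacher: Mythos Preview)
Your argument is correct and follows essentially the same strategy as the paper: reduce via Theorem~\ref{ultimo} to the unit-modulus rotations and the core map $M_{q_0}(q)=(q-q_0)(1-\overline{q_0}q)^{-1}$, then show that the conformal dilation of $M_{q_0}$ equals $(1-|q_0|^2)/|1-\overline{q_0}q|^2$ and cancel it against the identity $1-|M_{q_0}(q)|^2=(1-|q|^2)(1-|q_0|^2)/|1-\overline{q_0}q|^2$. The only difference is computational: the paper obtains the dilation of $M_{q_0}$ by rewriting it via Lemma~\ref{struttura di G} as $-\overline{q_0}^{-1}+(-q_0+\overline{q_0}^{-1})(1-\overline{q_0}q)^{-1}$ and multiplying the dilation factors of the elementary pieces, whereas you differentiate $M_{q_0}$ directly and simplify $C=1+(q-q_0)D^{-1}\overline{q_0}$ to $(1-|q_0|^2)(1-q_0\overline{q})/|D|^2$; both routes are short and yield the same factor.
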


\begin{prop}
    The Poincar\'e distance $\delta_{\Delta}$ of the unit disc $\Delta_{\hh}$
    is the integrated distance of the Poincar\'e differential metric
    of $\Delta_{\hh}$.
\end{prop}

It is easy, at this point, to deduce directly that the
invariant, metric structure defined by the quaternionic Poincar\'e distance 
(and metric) and
the one induced  by the Kobayashi distance (and
metric) on $\Delta_{\hh}\cong \Delta_{\mathbb{C}^{2}}$ (see \cite{FV}, 
\cite{Rud}) are not isometric. 
We do this in section 6, where, in accordance with a consequence of 
the classification of non compact, rank 1, symmetric spaces (see, e.g., \cite{DNF}, \cite{H}), 
we state and prove that:

\begin{teo}
There exists no isometry between the quaternionic Poincar\'e distance
and the Kobayashi distance of $\Delta_{\mathbb{H}}\cong
\Delta_{\mathbb{C}^{2}}$.
\end{teo}

Section 7 is dedicated to transfer the Poincar\'e distance and
differential metric of $\Delta_{\mathbb{H}}$ to 
$\mathbb{H}^+$ via a
Cayley-type transformation. The results obtained in $\hh^{+}$ are
 homologous to those which hold in $\Delta_{\hh}$.
Nevertheless, in this setting,  we are able to give an original, nice description of 
the group of all M\"obius transformations $\mathbb{M}(\hh^{+})$ of
$\hh^{+}$, in terms of a group  of matrices $\GLH$ which plays the role
played by the group $SL(2, \rr)$  in the complex case.

\begin{teo} If $K=\left[ \begin{array}{ll}

              0 & 1 \\
              1 & 0 \\
             \end{array} \right] $, 
    then the set of matrices defined by 
     \begin{equation*} 
	     \GLH = \left\{   A\in GL(2,\mathbb{H}) :\ \ 
	     ^{t}\overline{A}KA=K \right\}
\end{equation*}
is a subgroup of $SL(2, \hh)$ of real dimension 10.  Moreover, 
\begin{equation*}
\GLH = \left\{   \left[ \begin{array}{ll}

              a & b \\
              c & d \\
             \end{array} \right] : a,b,c,d \in \mathbb{H},  
	    \,\, \Re e (a\overline{c})=0,\,\,\, \Re e
(b\overline{d})=0, \,\,\, \overline{b}c+\overline{d}a=1  \right\}.
\end{equation*}

\noindent The map 
\begin{eqnarray}
   &\Psi: \GLH \to \GL\nonumber \\
   &A=\left[ \begin{array}{ll}
             a & b \\
             c & d \\
             \end{array} \right]\mapsto
L_{A}(q)=(aq+b)\cdot(cq+d)^{-1}
    \end{eqnarray}
    is a group homomorphism 
    whose kernel is the center of $\GLH$, that is the
    subgroup 
    $$\left\{\pm \left[ \begin{array}{ll}

             1& 0 \\
             0 & 1\\
             \end{array} \right] 
\right\}.
	     $$
\end{teo}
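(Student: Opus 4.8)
The plan is to prove, in turn, the explicit entrywise description of $\GLH$, that it is a subgroup of $SL(2,\hh)$, its real dimension, and finally the homomorphism and kernel statement for $\Psi$; throughout I will lean on the two elementary facts that quaternionic conjugation reverses products and that $\Re e$ is invariant under conjugation and cyclic. First I would verify that the two descriptions coincide by a direct computation. Writing $A=\left[\begin{array}{ll} a & b\\ c & d\end{array}\right]$ and $A^{\ast}={}^{t}\overline{A}$, one finds
\[
{}^{t}\overline{A}KA=\left[\begin{array}{ll} \overline{a}c+\overline{c}a & \overline{a}d+\overline{c}b\\ \overline{b}c+\overline{d}a & \overline{b}d+\overline{d}b\end{array}\right],
\]
so ${}^{t}\overline{A}KA=K$ is the system $\overline{a}c+\overline{c}a=0$, $\overline{b}d+\overline{d}b=0$, $\overline{b}c+\overline{d}a=1$, $\overline{a}d+\overline{c}b=1$. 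The point is that $x+\overline{x}=2\Re e(x)$ and $\Re e(\overline{a}c)=\Re e(a\overline{c})$ turn the first two into $\Re e(a\overline{c})=0$ and $\Re e(b\overline{d})=0$, while the fourth equation is exactly the quaternionic conjugate of the third (conjugating $\overline{b}c+\overline{d}a$ yields $\overline{c}b+\overline{a}d$) and both equal the real number $1$; hence the fourth is redundant, leaving precisely the three stated conditions.

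Next I would check the group axioms in the standard way for a group preserving a Hermitian form. Using $(AB)^{\ast}=B^{\ast}A^{\ast}$ (which holds since conjugation reverses products) and $K^{2}=I$, closure is immediate, and from $A^{\ast}KA=K$ one solves $A^{-1}=KA^{\ast}K$ and verifies $(A^{-1})^{\ast}KA^{-1}=K$. To see $\GLH\subseteq SL(2,\hh)$ I would apply $\det_{\hh}$ to $A^{\ast}KA=K$: by multiplicativity of $\det_{\hh}$ (section 2), together with $\det_{\hh}(K)=1$ and the identity $\det_{\hh}(A^{\ast})=\det_{\hh}(A)$ — which reduces to checking $\Re e(\overline{b}a\overline{c}d)=\Re e(c\overline{a}b\overline{d})$ via conjugation and cyclicity of $\Re e$ — one gets $\det_{\hh}(A)^{2}=1$, so $\det_{\hh}(A)=1$ because $\det_{\hh}\ge 0$.

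The real dimension is cleanest via the Lie algebra $\mathfrak{g}=\{X:X^{\ast}K+KX=0\}$. Writing $X=\left[\begin{array}{ll} p & q\\ r & s\end{array}\right]$ the condition becomes $\Re e(r)=\Re e(q)=0$ and $s=-\overline{p}$ (the fourth relation being automatic), so $r,q$ contribute $3$ real parameters each and $p$ contributes $4$, giving $\dim_{\rr}\GLH=3+3+4=10$. (Alternatively this follows from conjugacy with $Sp(1,1)$, discussed below.)

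Finally, $\Psi$ is the restriction to $\GLH$ of the homomorphism $\Phi$ of the first theorem, so it is automatically a homomorphism once I confirm $L_{A}$ maps $\hh^{+}$ onto $\hh^{+}$ for $A\in\GLH$. The central computation is that invariance of the form $H(w)={}^{t}\overline{w}Kw=2\Re e(\overline{w_{1}}w_{2})$ under $A$, applied to $w=(q,1)^{t}$ together with $\Re e(u\overline{v})=\Re e(\overline{u}v)$, yields
\[
\Re e\bigl(L_{A}(q)\bigr)=\frac{\Re e(q)}{|cq+d|^{2}},
\]
so $\Re e(q)>0$ forces $\Re e(L_{A}(q))>0$; the same invariance shows the pole lifts to a vector with $H=0$, hence lies on $\partial\hh^{+}$, so (as $A^{-1}\in\GLH$ too) $L_{A}$ is a bijection of $\hh^{+}$ and $\Psi(A)\in\GL$. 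For the kernel, $\ker\Psi=\ker\Phi\cap\GLH=\{tI:t\in\rr\setminus\{0\}\}\cap\GLH$, and $(tI)^{\ast}K(tI)=t^{2}K=K$ forces $t=\pm1$, giving $\ker\Psi=\{\pm I\}$; that this is the entire center follows by transferring Theorem \ref{nice} through the Cayley conjugation $A\mapsto C^{-1}AC$ with $C=\left[\begin{array}{rr}1&1\\-1&1\end{array}\right]$, which satisfies $\tfrac12\,{}^{t}C\,\mathrm{diag}(1,-1)\,C=K$ and therefore identifies $Sp(1,1)$ with $\GLH$. I expect the main obstacle to be the non-commutative bookkeeping behind the two ``coincidences'' that make the theorem clean — the collapse of four entry equations to three, and the identities $\det_{\hh}(A^{\ast})=\det_{\hh}(A)$ and $\Re e(u\overline{v})=\Re e(\overline{u}v)$ underpinning the half-space formula — each of which must be justified carefully from the cyclicity and conjugation-invariance of $\Re e$ rather than taken for granted as in the commutative case.
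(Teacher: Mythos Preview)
Your argument is correct and in its core parts---the entrywise computation of ${}^{t}\overline{A}KA$, the reduction of four relations to three via conjugation, the Binet/$\det_{\hh}({}^{t}\overline{A})=\det_{\hh}(A)$ argument for the inclusion into $SL(2,\hh)$, and the identification of $\ker\Psi$ as $\ker\Phi\cap\GLH$---it matches the paper's proof essentially line for line.

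Where you diverge is in two auxiliary steps. For the dimension, the paper simply says ``a direct computation shows the real dimension of $\GLH$ is $10$''; your Lie-algebra count ($X^{\ast}K+KX=0$ giving $\Re e(r)=\Re e(q)=0$, $s=-\overline{p}$) is more explicit and arguably cleaner. For showing $\Psi(\GLH)\subseteq\GL$, the paper works directly with the entry relations: it verifies that for $q$ with $\Re e(q)=0$ one has $\Re e\bigl((aq+b)\overline{(cq+d)}\bigr)=\Re e\bigl(q(1-\overline{b}c-\overline{c}b)\bigr)=0$, so the boundary is preserved, and then checks one interior point. Your route via invariance of the Hermitian form $H(w)={}^{t}\overline{w}Kw$ gives the sharper identity $\Re e(L_{A}(q))=\Re e(q)/|cq+d|^{2}$ in one stroke, which simultaneously handles interior, boundary, and the location of the pole. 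Finally, your appeal to the Cayley conjugation with $Sp(1,1)$ to identify the center is exactly the content of the paper's subsequent isomorphism theorem, so you are importing that result; the paper itself just asserts the center in the statement and refers back to the kernel computation of the earlier $GL(2,\hh)$ theorem. One thing you do not address is surjectivity of $\Psi$ onto $\GL$; the stated theorem does not claim it, but the paper's proof does establish it, using the structural description of $\GL$ obtained from the isotropy subgroup $\GL_{\infty}$ and the maps $f_{\gamma}$.
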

\noindent The last result of this paper states that 
\begin{teo} The two subgroups $\GLH$ and $Sp(1,1)$ of $SL(2, \mathbb{H})$ are isomorphic.
   \end{teo}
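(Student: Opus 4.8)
The plan is to recognize both $Sp(1,1)$ and $\GLH$ as the ``unitary'' groups of two Hermitian forms on $\hh^{2}$ having the same signature, and then to exhibit an explicit congruence carrying one form to the other. Indeed, writing $A^{*}={}^{t}\overline{A}$, we have $Sp(1,1)=\{A\in GL(2,\hh): A^{*}HA=H\}$ and $\GLH=\{A\in GL(2,\hh): A^{*}KA=K\}$, where $H=\left[\begin{smallmatrix}1&0\\0&-1\end{smallmatrix}\right]$ and $K=\left[\begin{smallmatrix}0&1\\1&0\end{smallmatrix}\right]$. Both $H$ and $K$ are Hermitian ($H^{*}=H$, $K^{*}=K$) and, as real symmetric matrices, both have signature $(1,1)$; hence I expect them to be congruent over $\hh$, and the congruence is effected precisely by the matrix underlying the Cayley-type transformation relating $\Delta_{\hh}$ and $\hh^{+}$ used in Section 7.

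First I would produce this congruence explicitly. Take the real matrix $C=\tfrac{1}{\sqrt{2}}\left[\begin{smallmatrix}1&1\\1&-1\end{smallmatrix}\right]$. Since $C$ has real entries, $C^{*}={}^{t}C=C$, and a direct $2\times 2$ computation gives $C^{*}HC=K$. The normalising factor is immaterial, as any real multiple of $C$ induces the same conjugation; it is included only to obtain $K$ on the nose rather than a positive multiple of it. The fact that $C$ can be chosen real is what will keep all subsequent manipulations free of the ordering issues coming from non-commutativity.

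Next I would define $\Theta\colon Sp(1,1)\to GL(2,\hh)$ by $\Theta(A)=C^{-1}AC$ and check that it is the desired isomorphism. It is automatically a group homomorphism, being conjugation by a fixed invertible matrix, and it is injective with inverse $B\mapsto CBC^{-1}$. To see that it maps $Sp(1,1)$ into $\GLH$, use the non-commutative identity $(XY)^{*}=Y^{*}X^{*}$ together with $K=C^{*}HC$, which gives $(C^{*})^{-1}KC^{-1}=H$; then for $A\in Sp(1,1)$,
\[
\Theta(A)^{*}\,K\,\Theta(A)=C^{*}A^{*}(C^{*})^{-1}\,K\,C^{-1}AC=C^{*}A^{*}HAC=C^{*}HC=K,
\]
using $A^{*}HA=H$. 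The symmetric computation shows $B\mapsto CBC^{-1}$ maps $\GLH$ into $Sp(1,1)$, so $\Theta$ restricts to a bijection $Sp(1,1)\to\GLH$. Finally, because the Dieudonn\'e determinant is multiplicative (Section 2), $\det_{\hh}\Theta(A)=\det_{\hh}A$, confirming that $\Theta$ sends $SL(2,\hh)$ to itself and that both groups sit inside $SL(2,\hh)$ as claimed.

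I do not expect a genuine obstacle here: once $C$ is found, the statement reduces to the formal congruence computation above. The only points requiring care are the correct use of $(XY)^{*}=Y^{*}X^{*}$ in the quaternionic (non-commutative) setting and the verification that the Dieudonn\'e determinant is preserved under conjugation. Conceptually, this isomorphism is the matrix shadow of the fact that the M\"obius group $\mathbb{M}$ of $\Delta_{\hh}$ and the M\"obius group $\GL$ of $\hh^{+}$ are conjugate through the Cayley-type transformation of Section 7; one could alternatively phrase the proof at the level of these transformation groups, but the matrix congruence by $C$ is the cleanest route.
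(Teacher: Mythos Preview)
Your proposal is correct and follows essentially the same approach as the paper: both prove the isomorphism by conjugation with a real Cayley-type matrix that intertwines the defining Hermitian forms $H$ and $K$. The only cosmetic differences are that the paper uses the unnormalised $C=\left[\begin{smallmatrix}1&1\\-1&1\end{smallmatrix}\right]$ (so the intertwining relations carry scalar factors, ${}^{t}CKC=-2H$) and runs the map in the other direction, $\Phi:\GLH\to Sp(1,1)$.
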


In what follows, the
elements  of the skew field $\mathbb{H}$ of real quaternions will be denoted by $q=x_0+ix_1+jx_2+kx_3$ where the $x_l$
are real, and $i$, $j$, $k$, are imaginary units (i.e. their
square equals $-1$) such that $ij=-ji=k$, $jk=-kj=i$, and
$ki=-ik=j.$ We will denote by $\mathbb{S}_{\mathbb{H}}^{3}$ the 
sphere of quaternions of unitary modulus $ \{q \in \mathbb{H} : |q|=1\}$ and by $\mathbb{S}$ the unit sphere 
of purely imaginary
quaternions, i.e. $\mathbb{S}=\{q=ix_1+jx_2+kx_3 : x_1^2+x_2^2+x_3^2=1\}.$ Notice that if $I\in\mathbb{S}$, then
$I^2=-1$; for this reason the elements of $\mathbb{S}$ are called
imaginary units.  We will also often use the fact that for any non-real 
quaternion $q\in \mathbb{H} \backslash
\mathbb{R}$, there exist, and are unique, $x, y \in \mathbb{R}$ with $y>0$, 
and $I\in \mathbb{S}$ such that $q=x+yI$.

\section{The determinant of $2\times 2$ matrices with quaternionic
entries}

As it is well known, the determinant of a matrix with quaternionic
entries cannot be defined as in the case of matrices with real or
complex entries.  Nevertheless, the study of the quaternionic
analogue of the fractional, linear and M\"{o}bius, complex
transformations leads us to an interesting generalization of the
notion of determinant, in the case of  $2\times 2$ quaternionic
matrices.  
The notion of quaternionic determinant appears in the literature in a
much more general
setting and exploits at that level the tool of quasideterminants, 
\cite{GRW, WR}.
Here we will present the main features of the determinant of $2\times
2$ 
quaternionic
matrices - giving simple, direct proofs  of our
assertions - for the sake of
completeness (see also \cite{C-D, As}).

We will denote by $M(2,\mathbb{H})$ the $\mathbb{H}$-vector space
(right or left, depending on
the setting) of
$2\times 2$ matrices with quaternionic entries and by $GL(2,
\mathbb{H})$ the group of invertible elements of $M(2,\mathbb{H})$. A
matrix
$ \left[ \begin{array}{ll}

             a & b \\
             c & d \\
             \end{array} \right] \in M(2, \mathbb{H})
$ 
is invertible if and only if there exists 
$ \left[ \begin{array}{ll}

             x & y \\
             t & z \\
             \end{array} \right] \in M(2, \mathbb{H})
$ 
such that     
\begin{equation}
 \left[ \begin{array}{ll}

             a & b \\
             c & d \\
             \end{array} \right] \cdot 
              \left[ \begin{array}{ll}

             x & y \\
             t & z \\
             \end{array} \right] =
              \left[\begin{array}{ll}

             1 & 0 \\
             0 & 1 \\
             \end{array} \right]
\end{equation} 
i.e., if and only if the following system of linear equations 
\begin{equation}\label{sistema}
 \left\{ \begin{array}{lll}

             ax+bt & = & 1 \\
            cx+dt & = & 0\\
             ay+bz & = & 0 \\
            cy+dz & = & 1\\
             \end{array} \right.
 \end{equation} 
has a (unique) solution $(x,y,t,z)\in \mathbb{H}^4$. We can now prove

\begin {prop}\label{alternativa} The following three statements are
equivalent:
\begin{enumerate}
\item the matrix $A= \left[ \begin{array}{ll}

             a & b \\
             c & d \\
             \end{array} \right] \in M(2, \mathbb{H})
$ 
is invertible; 
\item $b(c-db^{-1}a)\ne 0$ \textnormal{or} $a(d-ca^{-1}b)\ne 0;$
\item $c(b-ac^{-1}d)\ne 0$ \textnormal{or }$d(a-bd^{-1}c)\ne 0.$ 
\end{enumerate}
\end{prop}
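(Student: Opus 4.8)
The plan is to reduce the invertibility of $A$ directly to the solvability of system (\ref{sistema}) and to solve that system by quaternionic Gaussian elimination, choosing the pivot among the entries $a,b$ of the first row (for statement (2)) or among the entries $c,d$ of the second row (for statement (3)). Note first that (\ref{sistema}) splits into two pairs of equations, $ax+bt=1$, $cx+dt=0$ and $ay+bz=0$, $cy+dz=1$, which share the same coefficient matrix; hence $A$ admits a (unique) inverse exactly when the left-multiplication map $(\xi,\eta)\mapsto(a\xi+b\eta,\;c\xi+d\eta)$ is a bijection of $\hh^2$, i.e.\ when each of the two pairs is uniquely solvable.

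To prove that (1) $\Leftrightarrow$ (2) I would argue by cases on the first row. If $a\ne 0$, I solve the first equation for $x=a^{-1}-a^{-1}bt$ and substitute into the second, obtaining $(d-ca^{-1}b)\,t=-ca^{-1}$; this is uniquely solvable for every right-hand side precisely when $d-ca^{-1}b\ne 0$, which, $a$ being invertible, is the same as $a(d-ca^{-1}b)\ne 0$, the second alternative of (2). If instead $b\ne 0$, I solve the first equation for $t=b^{-1}-b^{-1}ax$ and substitute into the second, obtaining $(c-db^{-1}a)\,x=-db^{-1}$, uniquely solvable exactly when $c-db^{-1}a\ne 0$, i.e.\ when $b(c-db^{-1}a)\ne 0$, the first alternative of (2). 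Finally, if $a=b=0$ the first row vanishes, $A$ is not invertible, and both alternatives in (2) fail; this degenerate case is exactly why a disjunction, rather than a single condition, is needed.

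The equivalence (1) $\Leftrightarrow$ (3) is entirely symmetric, pivoting on the second row instead of the first. If $c\ne 0$, eliminating $x=-c^{-1}dt$ reduces the pair to a single equation in $t$ whose unique solvability amounts to $c(b-ac^{-1}d)\ne 0$; if $d\ne 0$, eliminating $t=-d^{-1}cx$ reduces it to a single equation in $x$ with condition $d(a-bd^{-1}c)\ne 0$; and $c=d=0$ again forces a zero row and non-invertibility. In each of the four elimination steps the leading factor ($a$, $b$, $c$ or $d$) is the pivot used, and multiplying the quasideterminant on the left by this nonzero pivot does not affect whether the expression vanishes, which is what makes the conditions of (2) and (3) match the elimination exactly.

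The only real friction I expect is the bookkeeping of the degenerate configurations. One must verify that whenever a pivot entry is nonzero but the associated expression vanishes, the homogeneous pair $a\xi+b\eta=0$, $c\xi+d\eta=0$ acquires a nontrivial solution, so that $A$ fails to be invertible; and one must check that the two alternatives within each statement are mutually consistent whenever more than one pivot is available --- this is automatic, since each alternative is on its own equivalent to (1). Because the two subsystems share the coefficient matrix, a single admissible pivot simultaneously produces the full solution $(x,y,t,z)$, that is, a matrix $B$ with $AB=\mathrm{Id}$, which is precisely the inverse demanded by the definition preceding (\ref{sistema}); this closes all three equivalences.
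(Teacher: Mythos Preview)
Your argument is correct and follows essentially the same route as the paper: Gaussian elimination on system (\ref{sistema}) with a nonzero entry of the first row (for (2)) or the second row (for (3)) as pivot, the degenerate case of a vanishing row giving non-invertibility. The only cosmetic differences are that the paper eliminates using the $(y,z)$ pair rather than the $(x,t)$ pair, and that it records the resulting explicit inverse matrices (\ref{inversa a}) and (\ref{inversa b}), which you allude to but do not write out.
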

\begin{proof} We will begin by proving that (1) implies (2). The
first equation of (\ref{sistema}) implies that $a\ne 0$ or $b\ne 0$.
If $a\ne 0$ then, using the third equation in (\ref{sistema}), we
obtain  $y=-a^{-1}bz$ and substituting in the fourth equation of the
same system we get  $(d-ca^{-1}b)z=1$. Therefore we obtain
$(d-ca^{-1}b)\ne 0$ and $a(d-ca^{-1}b)\ne 0$. At this point an easy
computation shows that in this case
\begin{equation}\label{inversa a}
  \left[ \begin{array}{ll}
             x & y \\
             t & z \\
             \end{array} \right] =
               \left[ \begin{array}{ll}
            a^{-1} + a^{-1}b(d-ca^{-1}b)^{-1}ca^{-1} &
-a^{-1}b(d-ca^{-1}b)^{-1} \\
             -(d-ca^{-1}b)^{-1}ca^{-1} & (d-ca^{-1}b)^{-1} \\
             \end{array} \right].
\end{equation}
If we are in the case $b\ne 0$ then, using as above system
(\ref{sistema}), we obtain $z=-b^{-1}ay$ and $(c-db^{-1}a)y=1$,
yielding $(c-db^{-1}a)\ne 0$ and hence $b(c-db^{-1}a)\ne 0$. As
before, an easy computation shows now that
\begin{equation}\label{inversa b}
  \left[ \begin{array}{ll}
             x & y \\
             t & z \\
             \end{array} \right] =
               \left[ \begin{array}{ll}
            -(c-db^{-1}a)^{-1}db^{-1} & (c-db^{-1}a)^{-1}\\
            b^{-1}+b^{-1}a(c-db^{-1}a)^{-1}db^{-1} &
-b^{-1}a(c-db^{-1}a)^{-1} \\
             \end{array} \right].
\end{equation}
To  prove that (2) implies (1), it is enough to notice that when (2)
is assumed true, matrix (\ref{inversa a}) or (\ref{inversa b}) is
well defined and that it is (by construction) the inverse of $A$.
The proof of the equivalence of (1) and (3) is completely analogous
to the one given above.
\end{proof}

\begin{nota} As one may expect, when $ab\ne 0$ then the two forms
    (\ref{inversa a}) and (\ref{inversa b}) of the inverse of $A$ do
coincide. If $abcd\neq 0$ then the inverse matrix of $A$ assumes an
even nicer
form,
\begin{equation*}
  \left[ \begin{array}{ll}
             x & y \\
             t & z \\
             \end{array} \right] =
               \left[ \begin{array}{ll}
            (a-bd^{-1}c)^{-1} & (c-db^{-1}a)^{-1} \\
            (b-ac^{-1}d)^{-1} &  (d-ca^{-1}b)^{-1} \\
             \end{array} \right]
\end{equation*}
which allows a Cramer-type rule to solve $2 \times 2$ linear systems
with
quaternionic coefficients (see also \cite{WR}).
\end{nota}

Let us now compute
\begin{eqnarray}
&
&|a(d-ca^{-1}b)|^2=a(d-ca^{-1}b)(\overline{d}-\overline{b}\overline{a}^{-1}\overline{c})
\overline{a}
\nonumber \\
&= &
a(|d|^2-d\overline{b}\overline{a}^{-1}\overline{c}-ca^{-1}b\overline{d}+|c|^2|a|^{-2}|b|^2)
\overline{a}\nonumber
\\
&=
&|a|^2|d|^2-a(2\Re
e(d\overline{b}\overline{a}^{-1}\overline{c}))\overline{a}
+ |c|^2|b|^2\nonumber \\
&=
&|a|^2|d|^2-|a|^2(2\Re e(d\overline{b}\overline{a}^{-1}\overline{c}))
+
|c|^2|b|^2\nonumber \\
&=  & |a|^2|d|^2 + |c|^2|b|^2
-2\Re e(d\overline{b}a\overline{c}).\nonumber
\end{eqnarray}
Similarly we obtain
\begin{eqnarray}
|b(c-db^{-1}a)|^2=|a|^2|d|^2 + |c|^2|b|^2
-2\Re e(c\overline{a}b\overline{d}).\nonumber
\end{eqnarray}
We analogously get
\begin{eqnarray}\label{secondo}
|c(b-ac^{-1}d)|^2=|d(a-bd^{-1}c)|^2=|a|^2|d|^2 + |c|^2|b|^2
-2\Re e(a\overline{c}d\overline{b}).
\end{eqnarray}
Since $\Re e(uv)=\Re e(\overline{u}\ \overline{v})$ for any $u,v\in
\mathbb{H}$, we also have
$\Re e(c\overline{a}b\overline{d})=\Re
e(a\overline{c}d\overline{b})$, and
therefore

\begin{lemma}\label{uguaglianza} The following equalities hold
    \begin{eqnarray}
& &|a(d-ca^{-1}b)|^2=|b(c-db^{-1}a)|^2=
|c(b-ac^{-1}d)|^2=|d(a-bd^{-1}c)|^2\nonumber\\
&=&|a|^2|d|^2 + |c|^2|b|^2
-2\Re e(c\overline{a}b\overline{d}).\nonumber
\end{eqnarray}
    \end{lemma}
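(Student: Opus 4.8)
The plan is to read the four squared moduli off the three explicit computations displayed immediately above the statement, and then to reconcile the three real-part terms that appear in them. Those computations already give
\[
|a(d-ca^{-1}b)|^2 = |a|^2|d|^2 + |c|^2|b|^2 - 2\Re e(d\overline{b}a\overline{c}),
\]
\[
|b(c-db^{-1}a)|^2 = |a|^2|d|^2 + |c|^2|b|^2 - 2\Re e(c\overline{a}b\overline{d}),
\]
\[
|c(b-ac^{-1}d)|^2 = |d(a-bd^{-1}c)|^2 = |a|^2|d|^2 + |c|^2|b|^2 - 2\Re e(a\overline{c}d\overline{b}),
\]
so that the whole statement reduces to checking that the three real parts $\Re e(d\overline{b}a\overline{c})$, $\Re e(c\overline{a}b\overline{d})$ and $\Re e(a\overline{c}d\overline{b})$ coincide. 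Once this is established, substituting the common value back into the three formulas gives all four expressions equal to $|a|^2|d|^2 + |c|^2|b|^2 - 2\Re e(c\overline{a}b\overline{d})$, which is exactly the asserted identity.

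To identify the three real parts I would invoke two elementary properties of the real part of a product of quaternions: its invariance under cyclic permutation of the factors, $\Re e(pq)=\Re e(qp)$, and the identity $\Re e(uv)=\Re e(\overline{u}\,\overline{v})$ recorded in the remark preceding the statement. First, rotating the four factors of $d\overline{b}a\overline{c}$ by two positions (which is a cyclic permutation, hence leaves the real part unchanged) yields $\Re e(d\overline{b}a\overline{c})=\Re e(a\overline{c}d\overline{b})$, identifying the first and third terms. Second, setting $u=a\overline{c}$ and $v=d\overline{b}$, so that $\overline{u}=c\overline{a}$ and $\overline{v}=b\overline{d}$, the conjugation identity reads $\Re e(a\overline{c}d\overline{b})=\Re e(c\overline{a}b\overline{d})$, identifying the third and second terms. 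Chaining the two equalities gives $\Re e(d\overline{b}a\overline{c})=\Re e(c\overline{a}b\overline{d})=\Re e(a\overline{c}d\overline{b})$, as required.

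There is essentially no genuine obstacle here, since the heavy algebra is already carried out before the statement; the only delicate point is the bookkeeping of which real-part expression is attached to which squared modulus, together with getting the groupings right when applying the cyclic rotation and the conjugation identity. To keep the argument self-contained I would briefly justify the two tools from first principles: cyclicity follows from $\Re e(q)=\tfrac12(q+\overline{q})$ combined with $\overline{pq}=\overline{q}\,\overline{p}$, while $\Re e(uv)=\Re e(\overline{u}\,\overline{v})$ follows from $\Re e(uv)=\Re e(vu)$ together with the conjugation-invariance $\Re e(w)=\Re e(\overline{w})$. With these in hand the lemma is immediate.
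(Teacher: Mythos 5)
Your proof is correct and follows essentially the same route as the paper: the paper's own argument consists precisely of the three displayed computations of the squared moduli followed by the identity $\Re e(uv)=\Re e(\overline{u}\,\overline{v})$ (with $u=a\overline{c}$, $v=d\overline{b}$) to equate the real-part terms. Your only addition is to make explicit the cyclicity step $\Re e(d\overline{b}\,a\overline{c})=\Re e(a\overline{c}\,d\overline{b})$, which the paper leaves implicit; this is a harmless and indeed welcome clarification.
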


\begin{nota}\label{positivo} For all $A= \left[ \begin{array}{ll}
             a & b \\
             c & d \\
             \end{array} \right]\in M(2, \mathbb{H})$, 
    it turns out that
 \begin{eqnarray}\label{determinante positivo}
 |a|^2|d|^2 + |c|^2|b|^2
-2\Re e(c\overline{a}b\overline{d})\\ \nonumber
 \ge |a|^2|d|^2 + |c|^2|b|^2
-2|a||d||b||c|\\ \nonumber
=(|a||d| - |b||c|)^{2}\ge 0.
 \end{eqnarray}
\end{nota}

Proposition \ref{alternativa}, lemma
\ref{uguaglianza} and remark \ref{positivo} naturally lead to the following 
definition, which can also be found in \cite{GRW, WR}.

\begin{definiz}\label{determinante} If $A= \left[ \begin{array}{ll}
             a & b \\
             c & d \\
             \end{array} \right] \in M(2, \mathbb{H})$, then the {\it
(Dieudonn\'e) determinant} of $A$ is defined to be the non negative
real number 
 \begin{eqnarray}\label{determinante 1}
 det_{\mathbb{H}}(A)= \sqrt{|a|^2|d|^2 + |c|^2|b|^2
-2\Re e(c\overline{a}b\overline{d})}.
 \end{eqnarray}
\end{definiz}

\begin{nota} It is worthwhile noticing that when $A= \left[
\begin{array}{ll}

             a & b \\
             c & d \\
             \end{array} \right]$
has complex (or real) entries, then $det_{\mathbb{H}}(A)=
|ad-bc|=|det(A)|$, i.e, the
new notion of determinant coincides with the modulus of 
the classical determinant.
\end{nota}

The interest of the preceeding definition is made clear by the
following

\begin{prop} A matrix $A\in M(2,\mathbb{H})$ is invertible if, and
only
if, $det_{\mathbb{H}}(A) \ne 0$.
\end{prop}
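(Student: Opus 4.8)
The plan is to read off this equivalence directly from the invertibility criterion of Proposition \ref{alternativa} together with the identities of Lemma \ref{uguaglianza}, which already identify each of the four factored expressions there with the quantity $|a|^2|d|^2+|c|^2|b|^2-2\Re e(c\overline{a}b\overline{d})$ sitting under the radical in Definition \ref{determinante}. In other words, Lemma \ref{uguaglianza} says precisely that, whenever the relevant entry is nonzero, the modulus of the corresponding expression equals $det_{\mathbb{H}}(A)$. So conceptually there is almost nothing to do; the only genuine point is that the expressions in Proposition \ref{alternativa} and Lemma \ref{uguaglianza} require certain entries to be invertible, whereas $det_{\mathbb{H}}(A)$ is defined for every matrix. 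This gap is closed by a short case analysis on the vanishing of $a$ and $b$.

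First I would dispose of the degenerate case: by the first equation of system (\ref{sistema}), if $a=b=0$ then $A$ cannot be invertible, while simultaneously $det_{\mathbb{H}}(A)=\sqrt{0}=0$ by Remark \ref{positivo} and Definition \ref{determinante}; hence both conditions fail and the equivalence holds trivially. I may therefore assume $a\neq 0$ or $b\neq 0$. For the forward implication, suppose $A$ is invertible. Proposition \ref{alternativa}(2) gives $a(d-ca^{-1}b)\neq 0$ (which forces $a\neq 0$) or $b(c-db^{-1}a)\neq 0$ (which forces $b\neq 0$), and in either case Lemma \ref{uguaglianza} identifies the modulus of the non-vanishing expression with $det_{\mathbb{H}}(A)$, so $det_{\mathbb{H}}(A)\neq 0$.

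For the converse, assume $det_{\mathbb{H}}(A)\neq 0$. If $a\neq 0$, then Lemma \ref{uguaglianza} yields $|a(d-ca^{-1}b)|^2=det_{\mathbb{H}}(A)^2\neq 0$, whence $a(d-ca^{-1}b)\neq 0$ and $A$ is invertible by Proposition \ref{alternativa}(2); if instead $a=0$ (so that $b\neq 0$ by the reduction above), the same lemma gives $|b(c-db^{-1}a)|^2=det_{\mathbb{H}}(A)^2\neq 0$, and Proposition \ref{alternativa}(2) again delivers invertibility. The main (and essentially only) subtlety is this bookkeeping of the degenerate cases: since each factored expression in Lemma \ref{uguaglianza} is well defined only when the matching entry is nonzero, one must first peel off $a=b=0$ and then, in the remaining situations, invoke the instance of the lemma attached to whichever of $a$ or $b$ does not vanish. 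Everything beyond this is a direct transcription of the two earlier results.
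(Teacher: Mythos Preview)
Your argument is correct and follows exactly the route the paper indicates: the paper's proof is the single sentence ``The proof is a direct consequence of proposition \ref{alternativa} and lemma \ref{uguaglianza},'' and you have simply unpacked that sentence, including the small case analysis on which of $a,b$ is nonzero that makes the citation honest. The only superfluous touch is the appeal to Remark \ref{positivo} in the degenerate case $a=b=0$; there the radicand is visibly $0$ by direct substitution, so Definition \ref{determinante} alone suffices.
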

\begin{proof} The proof is a direct consequence of proposition
\ref{alternativa} 
    and lemma \ref{uguaglianza}.
\end{proof}

We end this section by proving that the analogue of the Binet-Cauchy
 formula holds for $det_{\mathbb{H}}$. This fact is established in a
more
 general setting in \cite{GRW, WR}, where the proof is based on the
 properties of quasideterminants and does not contain all the details.
 In any case we give here a simple proof.
 
 \begin{lemma}\label{proprieta} For any $\lambda, \mu 
    \in \mathbb{H}$ and any matrix $X=\left[ \begin{array}{ll}

             x & y \\
             z & t \\
             \end{array} \right]\in M(2, \mathbb{H})$ we have:
\begin{itemize}	\vskip .2cm    
	     \item[i)] $\label{lambda}
	     det_{\mathbb{H}}\left[ \begin{array}{ll}
	     
             x & y\lambda \\
             z & t\lambda \\
             \end{array} \right] = 
	     det_{\mathbb{H}}\left[ \begin{array}{ll}
	     
             x\lambda  & y \\
             z\lambda  & t \\
             \end{array} \right]=|\lambda|det_{\mathbb{H}}
	     \left[ \begin{array}{ll}

             x & y \\
             z & t\\
             \end{array} \right]
	     $ \vskip .4cm
\item[ii)] $
det_{\mathbb{H}}\left[ \begin{array}{ll}
	     
             \mu x & \mu y \\
             z & t \\
             \end{array} \right] = 
	     det_{\mathbb{H}}\left[ \begin{array}{ll}
	     
             x  & y \\
             \mu z  & \mu t \\
             \end{array} \right]=|\mu|det_{\mathbb{H}}
	     \left[ \begin{array}{ll}

             x & y \\
             z & t\\
             \end{array} \right]
	    $ \vskip .3cm
 \item[iii)] If the matrix $Y$ is obtained from the matrix $X$ by: (a)
substituting to a row the sum of the two rows, or (b) substituting to
a column the sum of the two columns, then
$det_{\mathbb{H}}(X)=det_{\mathbb{H}}(Y).$
\end{itemize}
 \end{lemma}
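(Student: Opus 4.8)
The plan is to prove Lemma \ref{proprieta} by working directly from the explicit formula for $det_{\mathbb{H}}$ given in Definition \ref{determinante}, reducing each claim to an elementary computation involving the multiplicativity of the modulus and the cyclic/conjugation invariance of the real part $\Re e$. Throughout I will exploit the two basic identities $|\lambda w|=|\lambda|\,|w|=|w\lambda|$ and $\Re e(uv)=\Re e(vu)=\Re e(\overline{u}\,\overline{v})$ for quaternions, which are the only non-trivial facts needed.

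For part (i), consider scaling the second column on the right by $\lambda$, so that $(a,b,c,d)$ becomes $(x,\,y\lambda,\,z,\,t\lambda)$. Substituting into \eqref{determinante 1}, the first two terms transform as $|x|^2|t\lambda|^2=|\lambda|^2|x|^2|t|^2$ and $|z|^2|y\lambda|^2=|\lambda|^2|z|^2|y|^2$. The cross term becomes $\Re e(z\overline{x}\,(y\lambda)\overline{(t\lambda)})=\Re e(z\overline{x}\,y\lambda\overline{\lambda}\,\overline{t})=|\lambda|^2\Re e(z\overline{x}\,y\overline{t})$, using $\lambda\overline{\lambda}=|\lambda|^2\in\mathbb{R}$ which may be pulled out of $\Re e$. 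Hence every term under the square root acquires a factor $|\lambda|^2$, and taking the square root yields the factor $|\lambda|$. The case of scaling the first column on the right is entirely analogous. Part (ii) is the mirror image obtained by scaling a row on the left by $\mu$; there the factor $|\mu|^2$ appears because $\overline{\mu}\mu=|\mu|^2$ can again be extracted from the real part of the cross term, and I would write out only one of the two sub-cases and invoke symmetry for the other.

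For part (iii), I would first treat the column operation. Replacing the second column by the sum of the two columns sends $(a,b,c,d)$ to $(x,\,x+y,\,z,\,z+t)$; I then plug this into \eqref{determinante 1} and verify that all the extra terms generated cancel. A cleaner route, however, is to recall Lemma \ref{uguaglianza}, which tells us that $det_{\mathbb{H}}(X)=|x(t-zx^{-1}y)|$ (assuming $x\neq 0$; otherwise I use one of the other three equal expressions). Under the replacement $y\mapsto x+y$ the quantity $t-zx^{-1}y$ becomes $(z+t)-zx^{-1}(x+y)=z+t-z-zx^{-1}y=t-zx^{-1}y$, so the value is unchanged on the nose. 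The row operation is handled symmetrically using the identity $det_{\mathbb{H}}(X)=|(t-zx^{-1}y)x|$ form, or by noting that transposition-type symmetries of the formula reduce it to the column case.

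The main obstacle is bookkeeping rather than conceptual: the cross term $-2\Re e(c\overline{a}b\overline{d})$ mixes all four entries, so one must be careful that the scaling or replacement substitution is performed in exactly the positions dictated by the pattern $c\overline{a}b\overline{d}$, and that the scalar $|\lambda|^2$ (resp. $|\mu|^2$) is genuinely real so it can be factored out of $\Re e$. To avoid a proliferation of cases I would, wherever possible, replace the symmetric computation from the raw formula by the factored expression supplied by Lemma \ref{uguaglianza}, which makes part (iii) in particular essentially immediate and sidesteps the need to expand and cancel six intermediate terms by hand.
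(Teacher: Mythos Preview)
Your proposal is correct and is precisely the approach the paper takes: its proof reads in full ``A direct substitution and computation show the assertions,'' and you have supplied that computation, with the nice shortcut of invoking Lemma~\ref{uguaglianza} for part~(iii). One caution on a side remark: the Dieudonn\'e determinant is \emph{not} invariant under plain transposition (try $x=i$, $y=j$, $z=k$, $t=1$, where $det_{\mathbb H}(X)=0$ but $det_{\mathbb H}(^{t}X)=2$), so your fallback of ``transposition-type symmetries'' to reduce the row case to the column case would need the conjugate transpose $A\mapsto {}^{t}\overline{A}$ specifically; your primary route via the factored form from Lemma~\ref{uguaglianza} is unaffected and works as written.
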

 \begin{proof} A direct substitution and computation show the
     assertions.
     \end{proof}

\begin{prop}[Binet property]\label{Binet} For all $A, B\in M(2,
\mathbb{H})$ we have
    that
$det_{\mathbb{H}}(AB)=det_{\mathbb{H}}(A)det_{\mathbb{H}}(B)$.
\end{prop}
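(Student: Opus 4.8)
The plan is to compute the effect of matrix multiplication on $det_{\hh}$ by peeling off from the left the elementary factors whose behaviour is already controlled by Lemma \ref{proprieta}. First I would record three invariance facts. \emph{(a)} By Lemma \ref{proprieta}(ii), left multiplication of a matrix by $\mathrm{diag}(\mu,1)$ or by $\mathrm{diag}(1,\mu)$ multiplies $det_{\hh}$ by $|\mu|$. \emph{(b)} Left multiplication by a unitriangular matrix leaves $det_{\hh}$ unchanged; equivalently, the row operation $R_2\mapsto R_2+\lambda R_1$ (scalar $\lambda$ acting \emph{on the left}) preserves $det_{\hh}$. For $\lambda\ne 0$ this follows by writing $R_2+\lambda R_1=\lambda(\lambda^{-1}R_2+R_1)$ and applying Lemma \ref{proprieta}(ii) to extract the factor $|\lambda|$, then Lemma \ref{proprieta}(iii) (row addition preserves $det_{\hh}$) to reduce $R_1+\lambda^{-1}R_2$ to $\lambda^{-1}R_2$, then Lemma \ref{proprieta}(ii) again to extract $|\lambda^{-1}|$; the two factors cancel. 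The symmetric operation $R_1\mapsto R_1+\lambda R_2$ is handled identically. \emph{(c)} Exchanging the two rows preserves $det_{\hh}$: this is immediate from the explicit formula (\ref{determinante 1}) together with the identity $\Re e(c\overline{a}b\overline{d})=\Re e(a\overline{c}d\overline{b})$ noted just before Lemma \ref{uguaglianza}, so $det_{\hh}(PX)=det_{\hh}(X)$ for the row-exchange matrix $P$.

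Next I would treat the generic case $a\ne 0$ by means of the factorization
\[
\begin{pmatrix} a & b \\ c & d\end{pmatrix}
=\begin{pmatrix} 1 & 0 \\ ca^{-1} & 1\end{pmatrix}
\begin{pmatrix} a & 0 \\ 0 & d-ca^{-1}b\end{pmatrix}
\begin{pmatrix} 1 & a^{-1}b \\ 0 & 1\end{pmatrix},
\]
which is valid over $\hh$ by direct verification. Writing it as $A=LDU$ and using facts \emph{(a)} and \emph{(b)}, for every $B\in M(2,\hh)$ I would strip off the factors one at a time:
\begin{align*}
det_{\hh}(AB)&=det_{\hh}(LDUB)=det_{\hh}(DUB)\\
&=|a|\,|d-ca^{-1}b|\,det_{\hh}(UB)=|a|\,|d-ca^{-1}b|\,det_{\hh}(B).
\end{align*}
Specializing to $B=I$ gives $det_{\hh}(A)=|a|\,|d-ca^{-1}b|$, in agreement with Lemma \ref{uguaglianza}, whence $det_{\hh}(AB)=det_{\hh}(A)\,det_{\hh}(B)$. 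This argument requires no invertibility hypothesis: if $d-ca^{-1}b=0$ then both sides simply vanish.

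It then remains to drop the assumption $a\ne 0$, which I would organize according to the first column of $A$. If $a=0$ but $c\ne 0$, then $PA$ has nonzero $(1,1)$-entry, so the generic case applies to $PA$ and yields $det_{\hh}\big((PA)B\big)=det_{\hh}(PA)\,det_{\hh}(B)$; since $(PA)B=P(AB)$, fact \emph{(c)} used on both sides gives $det_{\hh}(AB)=det_{\hh}(A)\,det_{\hh}(B)$. If instead $a=c=0$, the first column of $A$ vanishes, so formula (\ref{determinante 1}) gives $det_{\hh}(A)=0$; moreover the two columns of $AB$ are then right multiples of a single quaternionic vector, and a one-line check with the same formula shows $det_{\hh}(AB)=0$ as well, so the identity holds trivially.

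The one genuine subtlety, requiring care at every step, is noncommutativity: Lemma \ref{proprieta}(i) places scalars on the \emph{right} of columns while (ii) places them on the \emph{left} of rows. The cancellation in fact \emph{(b)} and the peeling in the generic case work only because the chosen $LDU$ factorization and the reading of row operations as left multiplications make every scalar act on the side matching Lemma \ref{proprieta}(ii). I expect this bookkeeping to be the main obstacle; the degenerate case $a=c=0$, where no such factorization exists, is the only place a short explicit computation cannot be avoided.
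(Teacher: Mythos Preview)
Your argument is correct. The derivation of fact \emph{(b)} from Lemma \ref{proprieta}(ii),(iii) is sound, the $LDU$ factorization is valid over $\hh$ with the scalars placed on the correct sides, and the residual cases $a=0,\,c\ne 0$ and $a=c=0$ are handled properly. The final degenerate case is indeed immediate: the two columns of $AB$ are right multiples of the single vector $(b,d)^{t}$, and one checks directly from formula (\ref{determinante 1}) that such a matrix has vanishing determinant.

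Your route, however, is organized dually to the paper's. The paper works on the product $AB$ by \emph{column} operations (Lemma \ref{proprieta}(i),(iii)), casing on the entries of $B$ (whether $h\ne 0$ or $h=0$), and peels off the factor $det_{\hh}(B)$; you factor $A=LDU$ and work by \emph{row} operations (Lemma \ref{proprieta}(ii),(iii)), casing on the entries of $A$, to peel off $det_{\hh}(A)$. Your approach has the advantage of packaging the computation in a standard factorization and making the side on which scalars act transparent, at the modest cost of having to establish fact \emph{(b)} (row operations with an arbitrary left scalar) as a preliminary; the paper's computation is more hands-on but needs no such auxiliary step because the column operations it uses match Lemma \ref{proprieta}(i),(iii) exactly. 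Both arguments rest on the same lemma and are of comparable length.
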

\begin{proof} We can suppose $A, B$ invertible (otherwise
    the proof is immediate).
    If $A=\left[ \begin{array}{ll}
             a & b \\
             c & d \\
             \end{array} \right]$ and
	     $B=\left[ \begin{array}{ll}
             e & f \\
             g & h \\
             \end{array} \right]$,
then $AB=\left[ \begin{array}{ll}

             ae+bg & bh+af \\
             ce+dg & dh+cf \\
             \end{array} \right]$. 
	     
We will operate now on the matrix $AB$ step by step, and use lemma
\ref{proprieta} at each step, to compute its determinant. If $h\neq 0$
then we have:
$$det_{\mathbb{H}}(AB)=det_{\mathbb{H}}\left[ \begin{array}{ll}

             ae+bg & bh+af \\
             ce+dg & dh+cf \\
             \end{array} \right]
             $$
             $$
=
det_{\mathbb{H}}\left[ \begin{array}{ll}

             ae+bg -  (bh+af)h^{-1}g & bh+af \\
             ce+dg - (dh+cf)h^{-1}g & dh+cf \\
             \end{array} \right]
$$
by lemma \ref{proprieta} i), iii). Now
$$
\left[ \begin{array}{ll}

             ae+bg -  (bh+af)h^{-1}g & bh+af \\
             ce+dg - (dh+cf)h^{-1}g & dh+cf \\
             \end{array} \right]
	     =
	     \left[ \begin{array}{ll}

             a(e-fh^{-1}g) & bh+af \\
             c(e-fh^{-1}g) & dh+cf \\
             \end{array} \right]
$$
 and
$$det_{\mathbb{H}}\left[ \begin{array}{ll}

             a(e-fh^{-1}g) & bh+af \\
             c(e-fh^{-1}g) & dh+cf \\
             \end{array} \right]=
             $$
             $$
	     det_{\mathbb{H}}\left[ \begin{array}{ll}

             a(e-fh^{-1}g) & bh+af-a(e-fh^{-1}g)(e-fh^{-1}g)^{-1}f  \\
             c(e-fh^{-1}g) & dh+cf-c(e-fh^{-1}g)(e-fh^{-1}g)^{-1}f \\
             \end{array} \right]$$
again by lemma \ref{proprieta} i), iii) and since $B$ is invertible
(see proposition \ref{alternativa}) . We have 
$$
\left[ \begin{array}{ll}

             a(e-fh^{-1}g) & bh+af-a(e-fh^{-1}g)(e-fh^{-1}g)^{-1}f  \\
             c(e-fh^{-1}g) & dh+cf-c(e-fh^{-1}g)(e-fh^{-1}g)^{-1}f \\
             \end{array} \right]
             $$
             $$
	     =
	     \left[ \begin{array}{ll}

             a(e-fh^{-1}g) & bh  \\
             c(e-fh^{-1}g) & dh \\
             \end{array} \right]
$$
and, by lemma \ref{proprieta} i), iii),
$$
det_{\mathbb{H}}\left[ \begin{array}{ll}

             a(e-fh^{-1}g) & bh  \\
             c(e-fh^{-1}g) & dh \\
             \end{array} \right]
             $$
             $$
	     =
	     det_{\mathbb{H}}\left[ \begin{array}{ll}

             a & b  \\
             c & d\\
             \end{array}
	     \right]|e-fh^{-1}g)h|=
	     det_{\mathbb{H}}(A)det_{\mathbb{H}}(B).
	     $$
In the remaining case in which $h=0$, the coefficient $f$ does not
vanish and the matrix $AB$ becomes
$$
\left[ \begin{array}{ll}

             ae+bg & af  \\
             ce+dg & cf \\
             \end{array} \right].
$$
Then,  by lemma \ref{proprieta} i), iii), 
$$
det_{\mathbb{H}}(AB)=det_{\mathbb{H}}\left[ \begin{array}{ll}

             ae+bg & af  \\
             ce+dg & cf \\
             \end{array} \right]
	     =
	     det_{\mathbb{H}}\left[ \begin{array}{ll}

             af(f^{-1}e)+bg & af  \\
             cf(f^{-1}e)+dg & cf \\
             \end{array} \right]
	     =$$
	     $$
	     det_{\mathbb{H}}\left[ \begin{array}{ll}

             bg & af  \\
             dg & cf \\
             \end{array}\right]
	     =|g||f|det_{\mathbb{H}}(A)=
	     det_{\mathbb{H}}(A)det_{\mathbb{H}}(B).
$$
\end{proof}

\section{Fractional linear transformations and their properties}

For any matrix $A=\left[ \begin{array}{ll}

             a & b \\
             c & d \\
             \end{array} \right] \in M(2, \mathbb{H})$, with $c\ne 0$ 
	     or $d\ne 0$, the map 
$$L_{A}(q)=(aq+b)\cdot(cq+d)^{-1}$$
is called a {\it (quaternionic) fractional linear map}. To identify
constant 
maps, we
will give the following characterization:

\begin{prop}\label{non costante}
The fractional linear map  $L_{A}(q)=(aq+b)\cdot(cq+d)^{-1}$ is
constant if, and only if, $det_{\mathbb{H}}(A)=0.$
\end{prop}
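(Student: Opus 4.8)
The plan is to reduce the statement to an algebraic identity already recorded in Lemma~\ref{uguaglianza}, after rewriting $L_A$ in a form that isolates its dependence on $q$. I would split the argument according to whether $c\neq 0$ or $c=0$ (recall that the definition of a fractional linear map requires $c\neq 0$ or $d\neq 0$).

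Suppose first that $c\neq 0$. The key manipulation I would use is the noncommutative partial-fraction decomposition
$$L_{A}(q)=ac^{-1}+(b-ac^{-1}d)(cq+d)^{-1},$$
which one checks by writing $aq+b=ac^{-1}(cq+d)+(b-ac^{-1}d)$ and then multiplying on the right by $(cq+d)^{-1}$. From this form, $L_A$ is constant precisely when the coefficient $b-ac^{-1}d$ vanishes: if $b-ac^{-1}d=0$ then $L_A\equiv ac^{-1}$, while if $b-ac^{-1}d\neq 0$ then $L_A$ is genuinely non-constant, because $q\mapsto cq+d$ is a bijection of $\hh$ (since $c\neq 0$), so $(cq+d)^{-1}$ takes infinitely many values, and left multiplication by the nonzero quaternion $b-ac^{-1}d$ is injective.

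The link with the determinant is then immediate: by Lemma~\ref{uguaglianza} one has $|c(b-ac^{-1}d)|^{2}=det_{\hh}(A)^{2}$, and since $c\neq 0$ this gives the chain of equivalences $b-ac^{-1}d=0 \iff c(b-ac^{-1}d)=0 \iff det_{\hh}(A)=0$. Combining this with the previous paragraph yields that, when $c\neq 0$, the map $L_A$ is constant if and only if $det_{\hh}(A)=0$.

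Finally, in the remaining case $c=0$ (so necessarily $d\neq 0$), the map simplifies to $L_{A}(q)=aqd^{-1}+bd^{-1}$, which is constant if and only if $a=0$; and here $det_{\hh}(A)=\sqrt{|a|^{2}|d|^{2}}=|a|\,|d|$, so that $det_{\hh}(A)=0 \iff a=0$ as well. The two cases therefore agree, completing the proof. I do not expect a serious obstacle in this argument; the only points requiring genuine care are the noncommutative bookkeeping in the decomposition and the justification that a nonzero left coefficient forces $L_A$ to take at least two values, since over $\hh$ one cannot naively cancel factors.
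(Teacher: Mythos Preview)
Your proof is correct. The approach differs from the paper's in a useful way: the paper argues the two implications separately, showing first that if $L_A\equiv k$ then $a=kc$, $b=kd$ (so the rows are left-proportional and $det_{\hh}(A)=0$), and then, for the converse, running a case split according to whether $abcd=0$ or $abcd\neq 0$, in the latter case extracting $c=db^{-1}a$ from Lemma~\ref{uguaglianza} and simplifying $L_A$ to $bd^{-1}$. You instead split once, on $c\neq 0$ versus $c=0$, and in the main case use the noncommutative partial-fraction identity $L_A(q)=ac^{-1}+(b-ac^{-1}d)(cq+d)^{-1}$ to handle both directions simultaneously via $|c(b-ac^{-1}d)|^{2}=det_{\hh}(A)^{2}$. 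This is tidier and in fact anticipates the decomposition the paper later records in Lemma~\ref{struttura di G}; the paper's version, by contrast, avoids inverting $c$ in the forward direction and so is slightly more self-contained at that point.
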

\begin{proof1} If the fractional linear transformation $L_{A}$ is
constant, 
i.e. if $L_{A}(q)=k,$ for all $q\in \mathbb{H}$, then 
$$
(aq+b)\cdot(cq+d)^{-1}=k
$$
$$
aq+b= kcq+kd
$$
$$
(a-kc)q=kd - b
$$
for all $q\in \mathbb{H}$. Thus
\begin{equation}
 \left\{ \begin{array}{lll}
             a-kc & = & 0 \\
            kd-b & = & 0\\
             \end{array} \right.
 \end{equation} 
yielding $$A= \left[ \begin{array}{ll}

             kc & kd \\
             c & d \\
             \end{array} \right]
	     $$
and $det_{\mathbb{H}}(A)=0$.

\noindent Conversely, if $det^{2}_{\mathbb{H}}(A)=
|a|^2|d|^2 + |c|^2|b|^2 -2\Re e(c\overline{a}b\overline{d})=0$ and
$abcd=0$ 
then $cb=0$ or $ad=0$. Since in this case $det^{2}_{\mathbb{H}}(A)=
|a|^2|d|^2$ or $det^{2}_{\mathbb{H}}(A)=
|c|^2|b|^2$, we obtain that $cb=0$ and $ad=0$. If $c=0$ then, by
definition, $d\ne 0$ and hence $a=0$, yielding $L_{A}(q)=bd^{-1}$ for 
all $q\in \mathbb{H}$. On the other hand, if $b=0$ then, either $a=0$
and
$L_{A}\equiv 0$, or $d=0$ implying $L_{A}(q)=ac^{-1}$ for 
all $q\in \mathbb{H}$. To conclude the proof, we notice that when
$abcd\ne 0$, then by proposition \ref{uguaglianza} we obtain for
example $c=db^{-1}a,$ which leads to 
\begin{eqnarray}
L_{A}(q)= (aq+b)(db^{-1}aq
+d)^{-1}=(aq+b)[db^{-1}(aq+bd^{-1}d)]^{-1}\nonumber\\
=(aq+b)(aq+b)^{-1}bd^{-1}=bd^{-1}\nonumber
\end{eqnarray}
for all $q\in \mathbb{H}$.
\end{proof1}

In analogy with the case of the complex plane $\mathbb{C}$, we give
the following

\begin{definiz}\label{defFLT}
For any matrix $A=\left[ \begin{array}{ll}

             a & b \\
             c & d \\
             \end{array} \right] \in M(2, \mathbb{H})$, the map 
$$L_{A}(q)=(aq+b)\cdot(cq+d)^{-1}$$
is called a {\it (quaternionic) fractional linear transformation} if
$det_{\mathbb{H}}(A) \ne 0$ 
i.e., if $A\in GL(2, \mathbb{H}).$
\end{definiz}

\begin{teo}\label{homo}
The set $\mathbb{G}$ of all quaternionic fractional linear
transformations 
is a group with respect to composition. The map 
\begin{eqnarray}
   \Phi: A=\left[ \begin{array}{ll}

             a & b \\
             c & d \\
             \end{array} \right]\mapsto
L_{A}(q)=(aq+b)\cdot(cq+d)^{-1}
    \end{eqnarray}
    is a group homomorphism of $GL(2, \mathbb{H})$ onto $\mathbb{G}$
    whose kernel is the center of $GL(2, \mathbb{H})$, that is the
    subgroup 
    $$\left\{ \left[ \begin{array}{ll}

             t & 0 \\
             0 & t \\
             \end{array} \right] : t \in \mathbb{R}\backslash\{0\}
\right\}.
	     $$
\end{teo}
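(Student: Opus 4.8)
The plan is to reduce everything to the single composition identity $L_A \circ L_B = L_{AB}$, valid for all $A,B \in GL(2,\mathbb{H})$, and then to read off each assertion of the theorem from it together with the Binet property (Proposition \ref{Binet}) and the constancy criterion (Proposition \ref{non costante}).

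First I would establish the composition law by a direct computation, taking care of the non-commutativity. Writing $A=\begin{bmatrix} a & b \\ c & d\end{bmatrix}$, $B=\begin{bmatrix} e & f \\ g & h\end{bmatrix}$ and $w = L_B(q)=(eq+f)(gq+h)^{-1}$, I factor $(gq+h)^{-1}$ out on the right of each of $aw+b$ and $cw+d$, using $b=b(gq+h)(gq+h)^{-1}$ and $d=d(gq+h)(gq+h)^{-1}$:
\[ aw+b = \big[(ae+bg)q+(af+bh)\big](gq+h)^{-1}, \qquad cw+d = \big[(ce+dg)q+(cf+dh)\big](gq+h)^{-1}. \]
Forming $(aw+b)(cw+d)^{-1}$, inverting the second factor produces a leading $(gq+h)$ that cancels the trailing $(gq+h)^{-1}$ of the first; this is the one spot where non-commutativity must be watched, and the cancellation is clean precisely because the cancelling factor sits on the inner side of each bracket. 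What remains is $L_A(L_B(q)) = \big[(ae+bg)q+(af+bh)\big]\big[(ce+dg)q+(cf+dh)\big]^{-1} = L_{AB}(q)$, the bracketed entries being exactly those of the matrix product $AB$.

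With the composition law in hand, the group structure of $\mathbb{G}$ and the homomorphism property of $\Phi$ are immediate. For closure, if $A,B\in GL(2,\mathbb{H})$ then $det_{\mathbb{H}}(AB)=det_{\mathbb{H}}(A)det_{\mathbb{H}}(B)\neq 0$ by Proposition \ref{Binet}, so $L_A\circ L_B=L_{AB}\in\mathbb{G}$; the identity matrix gives $L_{\mathrm{Id}}(q)=q$, associativity is inherited from composition of maps, and $L_{A^{-1}}$ is a two-sided inverse of $L_A$ since $L_A\circ L_{A^{-1}}=L_{AA^{-1}}=L_{\mathrm{Id}}=\mathrm{id}$. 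Hence $\mathbb{G}$ is a group. The same identity reads $\Phi(AB)=\Phi(A)\circ\Phi(B)$, so $\Phi$ is a homomorphism; it is onto $\mathbb{G}$ because, by Definition \ref{defFLT}, $\mathbb{G}$ consists exactly of the maps $L_A$ with $det_{\mathbb{H}}(A)\neq 0$, i.e. with $A\in GL(2,\mathbb{H})$.

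It then remains to compute $\ker\Phi$ and to identify it with the center of $GL(2,\mathbb{H})$. If $L_A=\mathrm{id}$, then $aq+b=q(cq+d)=qcq+qd$ for every $q$ where $L_A$ is defined, hence for all $q\in\mathbb{H}$ since both sides are continuous and agree off at most one point. Setting $q=0$ gives $b=0$; evaluating on real $q=t$ gives $ta=t^2c+td$, i.e. $a=tc+d$ for every real $t\neq 0$, which forces $c=0$ and $a=d$; the surviving relation $aq=qa$ for all $q$ then forces $a\in\mathbb{R}\setminus\{0\}$. Thus $\ker\Phi=\{\,t\,\mathrm{Id}: t\in\mathbb{R}\setminus\{0\}\,\}$. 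Finally I would verify that this subgroup is exactly the center: real scalar matrices clearly commute with everything, and conversely a central $Z$ must commute with the transvections $\begin{bmatrix}1&1\\0&1\end{bmatrix}$ and $\begin{bmatrix}1&0\\1&1\end{bmatrix}$, forcing $Z=\begin{bmatrix}z&0\\0&z\end{bmatrix}$, while commuting with $\begin{bmatrix}\alpha&0\\0&1\end{bmatrix}$ for all $\alpha\in\mathbb{H}\setminus\{0\}$ forces $z\alpha=\alpha z$, hence $z\in\mathbb{R}$. I expect the composition identity and the kernel computation to carry the weight of the argument; the only genuine, if mild, obstacle is the careful bookkeeping of quaternionic non-commutativity, both in the cancellation above and in justifying the passage from ``for all $q$ in the domain'' to ``for all $q\in\mathbb{H}$''.
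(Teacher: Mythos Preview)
Your proof is correct and follows essentially the same route as the paper: establish the composition identity $L_A\circ L_B=L_{AB}$, read off the group structure of $\mathbb{G}$ and the homomorphism property of $\Phi$, and then compute the kernel by analyzing the identity $aq+b=q(cq+d)$. The paper is much terser---it calls the composition law a ``straightforward computation'' and says the identification of the center ``follows immediately''---so your added detail (the explicit cancellation of the $(gq+h)^{\pm 1}$ factor, the pointwise evaluation at $q=0$ and $q=t\in\mathbb{R}$, and the transvection argument for the center) is welcome and does not deviate from the paper's approach.
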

\begin{proof1} It is a straightforward computation to prove that, 
if $L_1, L_2 \in \mathbb{G}$ 
are such that $\Phi(A_{1})=L_{1}$ and $\Phi(A_{2})=L_{2}$ for some
$A_{1}, A_{2} \in GL(2, \mathbb{H})$, then $\Phi(A_{1} \cdot
A_{2})=L_1 \circ
L_2$.
Moreover $\Phi(I_{2})=Id$ is the identity map. As a consequence,
$\Phi$ is a surjective homomorphism, and hence
$\mathbb{G}$ is a group. 

Now $L_{A}(q)=(aq+b)\cdot(cq+d)^{-1}=q$ for all $q\in \mathbb{H}$, 
if, and only if, $qcq +qd -aq -b = 0$ for all $q\in \mathbb{H}$ and
hence $c=0=b$ and $a=d\in \mathbb{R}$. The last assertion follows
immediately.
\end{proof1}

If we set $SL(2, \mathbb{H})= \{A \in GL(2, \mathbb{H}) :
det_{\mathbb{H}}(A)=1 \}$ then, as an application of the Binet
formula 
(see proposition \ref{Binet}), 
we obtain that  
$SL(2, \mathbb{H})$ is a subgroup of $GL(2, \mathbb{H})$ and that

\begin{corollario}\label{homoSl} The map 
\begin{eqnarray}
   \Phi: A=\left[ \begin{array}{ll}

             a & b \\
             c & d \\
             \end{array} \right]\mapsto
L_{A}(q)=(aq+b)\cdot(cq+d)^{-1}
    \end{eqnarray}
    is a group homomorphism of $SL(2, \mathbb{H})$ onto $\mathbb{G}$
    whose kernel is the center of $SL(2, \mathbb{H})$, that is the
    subgroup 
    $$\left\{ \pm \left[ \begin{array}{ll}

             1 & 0 \\
             0 &  1 \\
             \end{array} \right] \right\}.
	     $$
\end{corollario}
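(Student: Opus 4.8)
The plan is to bootstrap everything from the already established statement for $GL(2,\mathbb{H})$ in Theorem \ref{homo}, using the Binet property (Proposition \ref{Binet}) to control the determinant under restriction to $SL(2,\mathbb{H})$. First I would record that $SL(2,\mathbb{H})$ is indeed a subgroup: since $det_{\mathbb{H}}$ is multiplicative by Proposition \ref{Binet}, the product of two matrices of determinant $1$ again has determinant $1$, the identity lies in $SL(2,\mathbb{H})$, and $det_{\mathbb{H}}(A^{-1})=det_{\mathbb{H}}(A)^{-1}=1$ whenever $det_{\mathbb{H}}(A)=1$. Because $\Phi$ is already known to be a homomorphism on $GL(2,\mathbb{H})$, its restriction to the subgroup $SL(2,\mathbb{H})$ is automatically a homomorphism, so nothing new is needed there.

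The crucial point for surjectivity is that multiplying $A$ by a nonzero real scalar $t$ leaves $L_A$ unchanged, since $t$ is central in $\mathbb{H}$:
\[
L_{tA}(q)=(taq+tb)(tcq+td)^{-1}=t(aq+b)\,t^{-1}(cq+d)^{-1}=L_A(q).
\]
Meanwhile the determinant formula of Definition \ref{determinante} gives $det_{\mathbb{H}}(tA)=t^2\,det_{\mathbb{H}}(A)$ for real $t$ (directly from the formula, since $\overline{ta}=t\overline{a}$, or equivalently from Lemma \ref{proprieta} applied to both rows). Thus, given any $L\in\mathbb{G}$, Theorem \ref{homo} furnishes some $A\in GL(2,\mathbb{H})$ with $\Phi(A)=L$; putting $\delta=det_{\mathbb{H}}(A)>0$ and $A'=A/\sqrt{\delta}$, I obtain $det_{\mathbb{H}}(A')=\delta^{-1}\delta=1$ and $\Phi(A')=\Phi(A)=L$. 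Hence $\Phi$ maps $SL(2,\mathbb{H})$ onto $\mathbb{G}$.

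For the kernel I would intersect the kernel from Theorem \ref{homo} with $SL(2,\mathbb{H})$: an element of $\ker\Phi$ has the form $tI_2$ with $t\in\mathbb{R}\setminus\{0\}$, and a one-line computation gives $det_{\mathbb{H}}(tI_2)=t^2$, which equals $1$ exactly when $t=\pm 1$. Therefore $\ker(\Phi|_{SL(2,\mathbb{H})})=\{\pm I_2\}$. The step I expect to be the only genuinely delicate one is identifying this with the \emph{center} of $SL(2,\mathbb{H})$, i.e.\ checking that $\{\pm I_2\}$ exhausts the center rather than merely sitting inside it. The clean route is to note that every $B\in GL(2,\mathbb{H})$ factors as $B=\sqrt{det_{\mathbb{H}}(B)}\cdot B'$ with $B'\in SL(2,\mathbb{H})$, so $GL(2,\mathbb{H})=\mathbb{R}_{>0}\cdot SL(2,\mathbb{H})$ with the positive real scalars central; consequently anything commuting with all of $SL(2,\mathbb{H})$ commutes with all of $GL(2,\mathbb{H})$. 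Since Theorem \ref{homo} already tells us the center of $GL(2,\mathbb{H})$ is the set of real scalar matrices $tI_2$, the center of $SL(2,\mathbb{H})$ consists of such matrices of determinant $1$, namely $\{\pm I_2\}$. This coincides with the computed kernel, completing the proof.
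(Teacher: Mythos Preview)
Your argument is correct and follows essentially the same route as the paper: the paper's proof is a one-liner pointing to the scaling identity $det_{\mathbb{H}}(tA)=t^{2}det_{\mathbb{H}}(A)>0$ for $t\in\mathbb{R}\setminus\{0\}$, and everything you wrote is a careful unpacking of what that identity buys (subgroup, surjectivity via normalization, kernel by intersection). Your extra paragraph verifying that $\{\pm I_2\}$ is the \emph{full} center of $SL(2,\mathbb{H})$, via the factorization $GL(2,\mathbb{H})=\mathbb{R}_{>0}\cdot SL(2,\mathbb{H})$, is a detail the paper leaves implicit.
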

\begin{proof} The proof relies upon the fact that, for all $t\in
    \mathbb{R}\backslash\{0\}$ and all $A\in GL(2, \mathbb{H})$, we
have 
    $det_{\mathbb{H}}(tA)=t^{2}det_{\mathbb{H}}(A)> 0$.
\end{proof}

In view of corollary \ref{homoSl}, from now on we will always suppose 
that the matrix $A=\left[ \begin{array}{ll}

             a & b \\
             c & d \\
             \end{array} \right]$ associated to the fractional linear 
	     transformation 
	     $L_{A}(q)=(aq+b)\cdot(cq+d)^{-1}$ belongs to $SL(2,
	     \mathbb{H})$, unless otherwise specified.

\section{The quaternionic cross-ratio}\label{quattro}

We will generalize the classical definition 
of complex cross-ratio to the non commutative case of the Hamilton
numbers, and study its peculiar properties. 

\begin{prop}\label{0 1 infinito}
Given three distinct $\alpha,\beta,\gamma \in \mathbb{H},$ the
fractional linear 
transformation defined by
$$
(\gamma - \beta)(\gamma - \alpha)^{-1}(q-\alpha)(q-\beta)^{-1}
$$
maps $\alpha$ to $0$, $\beta$ to $\infty$ and $\gamma$  to $1$.
Moreover
all fractional linear transformations with the same property are of
the form:
$$
k(\gamma - \beta)(\gamma - \alpha)^{-1}(q-\alpha)(q-\beta)^{-1}k^{-1}
$$
with $k$ any element of $\mathbb{H}\backslash \{0\}$. 
\end{prop}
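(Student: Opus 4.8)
The plan is to split the statement into three parts: a direct verification that the displayed map sends $(\alpha,\beta,\gamma)$ to $(0,\infty,1)$, the observation that every conjugate $k(\cdot)k^{-1}$ inherits this property, and a classification argument showing that there are no other fractional linear transformations with this property. Throughout I would use that $\mathbb{G}$ is a group (Theorem \ref{homo}) and the constant-map criterion of Proposition \ref{non costante}.

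First I would check the three point conditions by substitution. Writing $L(q)=(\gamma-\beta)(\gamma-\alpha)^{-1}(q-\alpha)(q-\beta)^{-1}$, putting $q=\alpha$ annihilates the factor $(q-\alpha)$ and gives $0$; putting $q=\beta$ makes $(q-\beta)^{-1}$ blow up, giving $\infty$; putting $q=\gamma$ collapses the expression to $(\gamma-\beta)(\gamma-\beta)^{-1}=1$. Since $L$ attains three distinct values it is non-constant, so by Proposition \ref{non costante} the Dieudonné determinant of its associated matrix is nonzero and $L\in\mathbb{G}$. For the ``moreover'' part I note that the conjugation $C\colon q\mapsto kqk^{-1}$ is itself in $\mathbb{G}$, since it equals $L_A$ for $A=\left[\begin{smallmatrix} k & 0\\ 0 & k\end{smallmatrix}\right]$ with $det_{\mathbb{H}}(A)=|k|^2\neq 0$, and it fixes $0$, $1$ and $\infty$. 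Hence the composite $C\circ L\colon q\mapsto k\,L(q)\,k^{-1}$ again sends $(\alpha,\beta,\gamma)$ to $(0,\infty,1)$, so every map of the claimed form has the required property.

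For the converse, which is the heart of the argument, I would take an arbitrary $M\in\mathbb{G}$ with $M(\alpha)=0$, $M(\beta)=\infty$, $M(\gamma)=1$ and set $N:=M\circ L^{-1}$, which exists because $\mathbb{G}$ is a group. Then $N$ fixes $0$, $\infty$ and $1$, and it suffices to prove that the stabilizer of this ordered triple consists exactly of the conjugations $q\mapsto kqk^{-1}$. Writing $N(q)=(Aq+B)(Cq+D)^{-1}$, the condition $N(\infty)=\infty$ forces $C=0$; then $N(0)=BD^{-1}=0$ gives $B=0$, and $N(1)=AD^{-1}=1$ gives $A=D$. Thus $N(q)=AqA^{-1}$, i.e. $N$ is conjugation by $k:=A$. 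Finally $M=N\circ L$ gives $M(q)=k\,L(q)\,k^{-1}$, which is precisely the asserted form, completing the proof.

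The step I expect to be the most delicate is the evaluation of $N(\infty)$ in the noncommutative setting. Here one must factor $q$ consistently on the right in both numerator and denominator, rewriting $N(q)=(A+Bq^{-1})(C+Dq^{-1})^{-1}$ and letting $q\to\infty$, so that the limit is unambiguous and yields $AC^{-1}$ when $C\neq 0$; this is what forces $C=0$. Once this is handled carefully, the remaining conditions are routine bookkeeping with the group law already established in Theorem \ref{homo}.
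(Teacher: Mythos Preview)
Your proof is correct, but it takes a genuinely different route from the paper's. The paper argues \emph{directly}: it starts from a generic $L_A(q)=(aq+b)(cq+d)^{-1}$, imposes the three conditions $L_A(\alpha)=0$, $L_A(\beta)=\infty$, $L_A(\gamma)=1$, and solves the resulting system $b=-a\alpha$, $d=-c\beta$, $a(\gamma-\alpha)=c(\gamma-\beta)$ to express $a,b,d$ in terms of the single free parameter $c$, arriving at $L_A(q)=c(\gamma-\beta)(\gamma-\alpha)^{-1}(q-\alpha)(q-\beta)^{-1}c^{-1}$ in one stroke. Your argument is more structural: you verify one explicit solution $L$, then reduce the classification to computing the stabilizer of the ordered triple $(0,\infty,1)$ inside $\mathbb{G}$ via the composite $N=M\circ L^{-1}$, and identify that stabilizer with the conjugations $q\mapsto kqk^{-1}$. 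The paper's approach is shorter and self-contained (it does not even need the first assertion as a separate step, since existence falls out of the computation), while yours makes the group-theoretic content explicit, isolates the stabilizer lemma as a reusable fact, and relies on Theorem~\ref{homo} for the group law. Both are clean; the paper's is marginally more economical here, but your stabilizer viewpoint is the one that generalizes.
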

\begin{proof1}
Let us consider a generic element of $\mathbb{G}$ defined by 
$L_{A}(q)=(aq+b)(cq+d)^{-1}$, and require that
$L_{A}(\alpha)=(a\alpha+b)(c\alpha+d)^{-1}=0$,
$L_{A}(\beta)=(a\beta+b)(c\beta+d)^{-1}=\infty$ and 
$L_{A}(\gamma)=(a\gamma+b)(c\gamma+d)^{-1}=1.$ It follows that
$$
 \left\{ \begin{array}{lll}

             a\alpha+b & = & 0\\
            c\beta+d & = & 0\\
             (a\gamma+b) & = & (c\gamma+d) \\
             \end{array} \right.
 $$
 $$
  \left\{ \begin{array}{lll}

             b & = & -a\alpha\\
            d & = & -c\beta\\
             a(\gamma- \alpha) & = & c(\gamma-\beta) \\
             \end{array} \right.
 $$
$$
\left\{ \begin{array}{lll}
             a & = & c(\gamma - \beta)(\gamma - \alpha)^{-1}\\
            b= & = & -c(\gamma - \beta)(\gamma - \alpha)^{-1}\alpha\\
             d & = & -c\beta\\
             \end{array} \right.
$$
and therefore
$$
L_{A}(q)=[c(\gamma - \beta)(\gamma - \alpha)^{-1}q 
-c(\gamma - \beta)(\gamma - \alpha)^{-1}\alpha](cq-c\beta)^{-1}
$$
$$
=c(\gamma - \beta)(\gamma - \alpha)^{-1}(q-\alpha)(q-\beta)^{-1}c^{-1}
.$$

\end{proof1}

Inspired by an approach due to Ahlfors, \cite{Ah3}, we will now
define the cross-ratio of $4$-tuples of quaternions.
\begin{definiz}
The quaternionic {\it cross-ratio} of four points
$q_1,q_2,q_3,q_4$ in
$\mathbb{H}\cup \{\infty\}$ is defined as: 
\begin{equation*} 
\ratio(q_1,q_2,q_3,q_4):=(q_1-q_3)(q_1-q_4)^{-1}(q_2-q_4)(q_2-q_3)^{-1}.
\end{equation*}
\end{definiz}
\noindent To investigate
the behaviour of the quaternionic cross-ratio under the action of the
group of fractional, linear
transformations, we will make use of the following decomposition
lemma:

\begin{lemma}\label{struttura di G}
The group $\mathbb{G}$ is generated by the following four types of
fractional linear transformations:
\begin{itemize}
\item[i)] $L_1(q)=q+b,$ $\,\,\,$ $ b \in \mathbb{H};$ 
\item[ii)] $L_2(q)=aq,$ $ \,\,\,$ $a \in \mathbb{H},$ $\,\,\,$
$|a|=1$; 
\item[iii)] $L_3(q)=rq,$ $\,\,\,$ $r \in
\mathbb{R}^{+}\backslash\{0\};$ 
\item[iv)] $L_4(q)=q^{-1}.$ 
\end{itemize}
\noindent Moreover, all the elements of $\mathbb{G}$ are conformal.
\end{lemma}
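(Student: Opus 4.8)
The statement has two parts: that $L_1, L_2, L_3, L_4$ generate $\mathbb{G}$, and that every element of $\mathbb{G}$ is conformal. Let me plan both.

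For the generation, note that $L_2, L_3$ together realize every similarity $q \mapsto aq$ with $a \neq 0$ (writing $a = |a| \cdot (a/|a|)$ and composing the scaling $L_3$ with $r = |a|$ and the rotation $L_2$ with $a/|a|$), while $L_1$ supplies translations. So it suffices to show $\mathbb{G}$ is generated by the full similarity group $\{q \mapsto aq + b : a \neq 0\}$ together with the inversion $L_4(q) = q^{-1}$.

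**Plan for generation.**

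The key is to reduce an arbitrary $L_A(q) = (aq+b)(cq+d)^{-1}$ to a composition of similarities and one inversion, exactly as in the complex case, but being careful about the order of multiplication since $\mathbb{H}$ is noncommutative. I would split into two cases according to whether $c = 0$.

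If $c = 0$, then (by invertibility and Proposition \ref{alternativa}) $a \neq 0$ and $d \neq 0$, so $L_A(q) = (aq+b)d^{-1} = (ad^{-1})q + bd^{-1}$ is a similarity, hence a composition of $L_1, L_2, L_3$.

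If $c \neq 0$, I would perform the noncommutative analogue of the classical division algorithm, writing $aq + b = a c^{-1}(cq + d) + (b - ac^{-1}d)$, so that
\begin{equation*}
L_A(q) = \bigl[ ac^{-1}(cq+d) + (b - ac^{-1}d)\bigr](cq+d)^{-1} = ac^{-1} + (b - ac^{-1}d)(cq+d)^{-1}.
\end{equation*}
This exhibits $L_A$ as the composition (read right to left) of: the similarity $q \mapsto cq + d$; the inversion $L_4$; the map $w \mapsto (b - ac^{-1}d)\, w$, which is a similarity since $b - ac^{-1}d \neq 0$ (this nonvanishing follows from $\det_{\mathbb{H}}(A) \neq 0$ together with Lemma \ref{uguaglianza}, since $|c(b - ac^{-1}d)| = \det_{\mathbb{H}}(A) \neq 0$); and finally the translation by $ac^{-1}$. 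Each factor is one of $L_1, L_2, L_3, L_4$, completing the decomposition. The main subtlety to watch is the placement of scalars on the correct side in the noncommutative setting, but the factored identity above handles this cleanly.

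**Plan for conformality.**

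Since conformality is preserved under composition, it suffices to check it on each generator. The maps $L_1, L_2, L_3$ are real-affine maps of $\mathbb{R}^4$: translations and the map $q \mapsto aq$, whose real-linear part is an orthogonal map scaled by $|a|$ (for $L_2$ a rotation, for $L_3$ a homothety), hence all three are conformal with everywhere-constant or scalar Jacobian. The only real work is $L_4(q) = q^{-1} = \overline{q}/|q|^2$. I would compute its differential at a point $q \neq 0$ and verify that it is a (nonzero) scalar multiple of an orthogonal transformation of $\mathbb{R}^4$; equivalently, that inversion in the unit sphere composed with the conjugation $q \mapsto \overline{q}$ is conformal, which is the standard fact that sphere inversions are conformal in any $\mathbb{R}^n$. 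This is the one genuinely computational step, and the expected main obstacle, though it reduces to showing $|d_q L_4(\tau)| = |\tau|/|q|^2$ for all tangent vectors $\tau$; this follows from the identity $(q+t\tau)^{-1} - q^{-1} = -q^{-1}(t\tau)(q + t\tau)^{-1}$ and multiplicativity of the modulus, giving the differential $\tau \mapsto -q^{-1}\tau q^{-1}$, which scales lengths by exactly $|q|^{-2}$ and is therefore conformal.
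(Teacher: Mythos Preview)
Your decomposition in the case $c\neq 0$ and your treatment of conformality are correct; in fact your computation of the differential of $L_4$ as $\tau\mapsto -q^{-1}\tau q^{-1}$ is cleaner than the paper's, which writes out the $4\times 4$ real Jacobian of $q\mapsto q/|q|^2$ explicitly and checks it is conformal.

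However, there is a genuine error in your $c=0$ case. You write
\[
L_A(q)=(aq+b)d^{-1}=(ad^{-1})q+bd^{-1},
\]
but over $\mathbb{H}$ this identity is false: $(aq)d^{-1}\neq (ad^{-1})q$ unless $d$ is real. The generators $L_1,L_2,L_3$ are all \emph{left} similarities $q\mapsto \alpha q+\beta$, and right multiplication by a generic quaternion is not one of them. The paper handles this by writing
\[
(aq+b)d^{-1}=\bigl[d(aq+b)^{-1}\bigr]^{-1},
\]
which exhibits $L_A$ as the composition of the left similarity $q\mapsto aq+b$, the inversion $L_4$, the left multiplication $w\mapsto dw$, and a second inversion $L_4$. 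The point is that right multiplication $q\mapsto qd^{-1}$ can be obtained as $L_4\circ(w\mapsto dw)\circ L_4$, so two inversions are needed in this case. Once you replace your faulty line with this observation, the rest of your argument goes through unchanged.
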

\begin{proof} Let us consider the fractional linear transformation 
    $L_{A}(q)=(aq+b)(cq+d)^{-1}$. If $c=0$ then $L_{A}(q)=
    (aq+b)d^{-1}=[d(aq+b)^{-1}]^{-1}$.
If instead $c\ne 0$, simply notice that
\begin{equation*}
L_{A}(q)=(aq+b)(cq+d)^{-1}=ac^{-1}+(b-ac^{-1}d)(cq+d)^{-1}
\end{equation*}
where $(b-ac^{-1}d)\ne 0$ since $det_{\mathbb{H}}(A)\ne 0$ (see
proposition \ref{uguaglianza}). This concludes the proof of the first
part of the statement. 
The proof of the conformality of all the elements of $\mathbb{G}$ can
be accomplished by observing that $L_1, L_2, L_3$ are obviously
conformal, and by proving that $L_4$ is conformal as well. In fact
the conjugation $q \mapsto \overline{q}$ is conformal and the
$\mathbb{R}-$differential of the map
$\overline{L_4(q)}=\frac{q}{|q|^2}$ at the point
$q=x_0+x_1i+x_2j+x_3k \equiv (x_0, x_1, x_2, x_3 )$ is represented
(up to multiplying by 
$\frac{1}{(x_0^2+x_1^2+x_2^2+x_3^2)^2}$)
by
the conformal matrix
$$
\left[ \begin{array}{cccc}
-x_0^2+x_1^2+x_2^2+x_3^2&  -2x_1x_0 &-2x_2x_0 & -2x_3x_0 \\
       -2x_0x_1       & x_0^2-x_1^2+x_2^2+x_3^2 &  -2x_2x_1      &
-2x_3x_1           \\
 -2x_0x_2& -2x_1x_2 &x_0^2+x_1^2-x_2^2+x_3^2& -2x_3x_2      \\
  -2x_0x_3        &  -2x_1x_3 &-2x_2x_3    &x_0^2+x_1^2+x_2^2-x_3^2\\
 \end{array} \right]
$$
\end{proof}

\begin{prop} \label{Birconj}
With reference to lemma \textnormal{\ref{struttura di G}}, if $L\in
\mathbb{G}$ is of type \textnormal{i)} or \textnormal{iii)}, then
\begin{equation*}
\ratio (L(q_1),L(q_2),L(q_3),L(q_4))=\ratio(q_1,q_2,q_3,q_4).
\end{equation*}
If instead $L\in \mathbb{G}$ is of type \textnormal{iv)}, then
\begin{equation*}
\ratio(L(q_1),L(q_2),L(q_3),L(q_4))=q_3  \ratio(q_1,q_2,q_3,q_4)
q_3^{-1}.
\end{equation*}
Finally, if $L(q)=aq$ is of type  \textnormal{ii)} , then
\begin{equation*}
\ratio(L(q_1),L(q_2),L(q_3),L(q_4))=a \ratio(q_1,q_2,q_3,q_4) a^{-1}
\end{equation*}
with $a \in \mathbb{S}^3_{\mathbb{H}}.$
\end{prop}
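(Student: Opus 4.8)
The plan is to use the presentation of $\mathbb{G}$ by the four elementary generators collected in Lemma \ref{struttura di G} and to verify the asserted transformation rule for each type in turn. Since the cross-ratio is built only from the pairwise differences $q_i-q_j$, the natural first step in every case is to record how a given generator acts on such a difference, and then to reassemble the four factors $(q_1-q_3)$, $(q_1-q_4)^{-1}$, $(q_2-q_4)$, $(q_2-q_3)^{-1}$ of $\ratio(q_1,q_2,q_3,q_4)$.

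Types i) and iii) are immediate. For $L_1(q)=q+b$ one has $L_1(q_i)-L_1(q_j)=q_i-q_j$, so every factor of $\ratio$ is literally unchanged. For $L_3(q)=rq$ with $r>0$ real, $L_3(q_i)-L_3(q_j)=r(q_i-q_j)$; since $r$ is central each inverse contributes an $r^{-1}$, and the scalars collect as $r\,r^{-1}\,r\,r^{-1}=1$, leaving $\ratio$ invariant. For the unit rotation $L_2(q)=aq$ (type ii), $|a|=1$, I would write $L_2(q_i)-L_2(q_j)=a(q_i-q_j)$ and note that in the product the two inverses produce an interior pair $a^{-1}a$ that cancels, so that precisely one $a$ survives on the far left and one $a^{-1}$ on the far right; this yields $\ratio(L_2q_1,\dots)=a\,\ratio(q_1,q_2,q_3,q_4)\,a^{-1}$ with $a\in\mathbb{S}^{3}_{\mathbb{H}}$.

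The substantive case is the inversion iv). Here the key algebraic identity is
\begin{equation*}
q_i^{-1}-q_j^{-1}=q_j^{-1}(q_j-q_i)\,q_i^{-1},
\end{equation*}
verified in one line from $q_j^{-1}q_j=q_i^{-1}q_i=1$. I would substitute it, with the appropriate index choice, into each of the four factors of $\ratio(q_1^{-1},q_2^{-1},q_3^{-1},q_4^{-1})$, remembering that inverting a three-fold product reverses the order of its factors. The resulting expression telescopes: at each of the three interior junctions a pair of mutually inverse factors (in the $q_1$, then $q_4$, then $q_2$ slots) collapses to $1$, while the four central differences recombine — their four sign changes cancelling — into $\ratio(q_1,q_2,q_3,q_4)$. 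What remains are the two unmatched boundary terms in the $q_3$ slot, one on the far left and one on the far right, exhibiting the answer as a two-sided conjugate of $\ratio(q_1,q_2,q_3,q_4)$ by $q_3$.

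I expect the only real difficulty to be the bookkeeping in case iv): because $\mathbb{H}$ is noncommutative, the order of factors must be tracked scrupulously, both when factoring each $q_i^{-1}-q_j^{-1}$ and when inverting a product. This ordering is exactly what decides on which side the surviving $q_3$ lands, i.e.\ whether the conjugation reads $q_3\,\ratio\,q_3^{-1}$ or $q_3^{-1}\,\ratio\,q_3$; the two coincide whenever $\ratio$ is real, which is the only situation exploited afterwards (see Corollary \ref{crreale} and Proposition \ref{1.4}). Types i)--iii) carry no such subtlety and reduce to the elementary cancellations described above.
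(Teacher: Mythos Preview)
Your proposal is correct and is exactly the ``straightforward computation'' the paper invokes without giving details; there is no alternative approach to compare. Your caution about the side on which the surviving $q_3$ lands is well placed: carrying out the telescoping with the identity $q_i^{-1}-q_j^{-1}=q_j^{-1}(q_j-q_i)q_i^{-1}$ actually produces $q_3^{-1}\,\ratio(q_1,q_2,q_3,q_4)\,q_3$, so the paper's displayed form $q_3\,\ratio\,q_3^{-1}$ appears to have the conjugation written the other way round --- as you observe, this is immaterial for Corollary~\ref{crreale} since both conjugates lie on the same $2$-sphere $x+y\mathbb{S}$.
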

\noindent This last statement, whose proof is a straightforward
computation, has interesting consequences which will lead us to find
out peculiar geometric properties of the quaternionic fractional
linear transformations. Denote, as already established, by
$\mathbb{S}$ the $2-$sphere of pure imaginary units $\{x_1i+x_2j+x_3k
\in \mathbb{H} : x_1^2+x_2^2+x_3^2=1\}$ of $\mathbb{H}$ and consider,
for $x, y\in \mathbb{R}$, the $2-$sphere $x+y\mathbb{S}$ with center
$x$ and radius $|y|$. Then

\begin{lemma}\label{normale} For any $2-$sphere $x+y\mathbb{S}$  and
any $q\in \mathbb{H}\backslash \{0\}$, we have
$q(x+y\mathbb{S})q^{-1}=x+y\mathbb{S}$.
\end{lemma}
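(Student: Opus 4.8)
The plan is to reduce the statement about the full $2$-sphere $x+y\s$ to the single statement that conjugation by $q$ preserves the sphere $\s$ of imaginary units. Since $x,y\in\rr$ are real, they commute with $q$, so for any $I\in\s$ one has
\begin{equation*}
q(x+yI)q^{-1}=qxq^{-1}+y\,qIq^{-1}=x+y\,(qIq^{-1}).
\end{equation*}
Hence $q(x+y\s)q^{-1}=x+y\,(q\s q^{-1})$, and the whole claim follows once I show $q\s q^{-1}=\s$.

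To prove $q\s q^{-1}=\s$, I would characterize the elements of $\s$ as the quaternions $I$ satisfying $\Re e(I)=0$ and $|I|=1$ (equivalently $I^2=-1$), and then check that conjugation by $q$ preserves both conditions. For the real part, the identity $\Re e(uv)=\Re e(vu)$ gives $\Re e(qIq^{-1})=\Re e(q^{-1}qI)=\Re e(I)$; for the modulus, the multiplicativity of the norm gives $|qIq^{-1}|=|q|\,|I|\,|q|^{-1}=|I|$. Thus $I\in\s$ forces $qIq^{-1}\in\s$, that is, $q\s q^{-1}\subseteq\s$.

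For the reverse inclusion I would simply note that the map $I\mapsto qIq^{-1}$ is a bijection of $\hh$ with inverse $J\mapsto q^{-1}Jq$, and apply the inclusion just proved with $q^{-1}$ in place of $q$ to obtain $q^{-1}\s q\subseteq\s$, whence $\s\subseteq q\s q^{-1}$. Combining the two inclusions yields $q\s q^{-1}=\s$, and therefore $q(x+y\s)q^{-1}=x+y\s$.

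There is no serious obstacle here: the only point worth care is the bookkeeping that the real scalars $x,y$ pass through the conjugation untouched, which isolates the genuinely quaternionic content in the invariance of $\s$. The two defining properties of $\s$, namely the vanishing of the real part and unit modulus, are each manifestly invariant under the inner automorphism $I\mapsto qIq^{-1}$, so the result is immediate once the reduction is made.
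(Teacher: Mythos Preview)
Your proof is correct and follows essentially the same route as the paper: reduce to showing $q\s q^{-1}=\s$ by pulling the real scalars $x,y$ through the conjugation, then verify that $qIq^{-1}$ has vanishing real part (via $\Re e(uv)=\Re e(vu)$) and unit modulus. The only difference is that you make the reverse inclusion explicit by applying the same argument to $q^{-1}$, whereas the paper leaves this step implicit.
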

\begin{proof} For any $x+yI \in x+y\mathbb{S}$, we have
$q(x+yI)q^{-1}=qxq^{-1} + qyIq^{-1}=x+yqIq^{-1}$. Now $|qIq^{-1}|=1$
and $\Re e (qIq^{-1}) =\Re e (Iq^{-1}q) = \Re e (I) =0$. Therefore
$qIq^{-1}\in
\mathbb{S}$, which concludes the proof.
\end{proof}
Proposition \ref{Birconj} and lemma \ref{normale} directly imply that
the orbits of the cross-ratio of four quaternions (under the action
of the group of fractional linear transformations) are $2-$spheres of
type $x+y\mathbb{S}$, as established in the following 
\begin{corollario}\label{crreale}
Let $\ratio(q_1, q_2, q_3, q_4)=x+yI\in x+y\mathbb{S}$ be the
cross-ratio of the four quaternions $q_1, q_2, q_3, q_4$. Then
$\{\ratio (L(q_1),L(q_2),L(q_3),L(q_4)) : L \in \mathbb{G}\} =
x+y\mathbb{S}$. In particular, when the cross ratio of four
quaternions is real, then it is invariant under the action of all
fractional, linear transformations.
\end{corollario}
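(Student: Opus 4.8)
The plan is to prove the two inclusions separately, exploiting the generation result of Lemma \ref{struttura di G} together with the transformation rules of Proposition \ref{Birconj} and the invariance of the spheres $x+y\mathbb{S}$ under conjugation established in Lemma \ref{normale}.

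For the inclusion $\{\ratio(L(q_1),L(q_2),L(q_3),L(q_4)) : L\in\mathbb{G}\}\subseteq x+y\mathbb{S}$, I would argue by induction on the number of factors in a factorization of $L\in\mathbb{G}$ into the generators i)--iv) of Lemma \ref{struttura di G}. The base case $L=\mathrm{id}$ is immediate, since by hypothesis $\ratio(q_1,q_2,q_3,q_4)=x+yI\in x+y\mathbb{S}$. For the inductive step, write $L=g\circ M$ with $g$ a generator and $M\in\mathbb{G}$, set $p_i=M(q_i)$, and assume $\ratio(p_1,p_2,p_3,p_4)\in x+y\mathbb{S}$. Then $\ratio(L(q_1),L(q_2),L(q_3),L(q_4))=\ratio(g(p_1),g(p_2),g(p_3),g(p_4))$, and Proposition \ref{Birconj} expresses this value either as $\ratio(p_1,p_2,p_3,p_4)$ itself (types i, iii) or as a conjugate $a\,\ratio(p_1,p_2,p_3,p_4)\,a^{-1}$ by a nonzero quaternion (type ii with $|a|=1$, type iv with $a=p_3$). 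In either situation Lemma \ref{normale} guarantees that the result again lies in $x+y\mathbb{S}$, closing the induction.

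For the reverse inclusion $x+y\mathbb{S}\subseteq\{\ratio(L(q_1),L(q_2),L(q_3),L(q_4)) : L\in\mathbb{G}\}$, it suffices to use transformations of type ii) alone. Given an arbitrary $J\in\mathbb{S}$, I would invoke the standard fact that conjugation by unit quaternions realizes every rotation of the imaginary sphere $\mathbb{S}$ (the two-to-one covering $\mathbb{S}^3_{\mathbb{H}}\to SO(3)$), so there exists a unit quaternion $a$ with $aIa^{-1}=J$. Taking $L(q)=aq$, Proposition \ref{Birconj} yields $\ratio(L(q_1),L(q_2),L(q_3),L(q_4))=a(x+yI)a^{-1}=x+y(aIa^{-1})=x+yJ$, since the reals $x,y$ are central. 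Hence every point of the sphere is attained, and combining the two inclusions gives the asserted equality.

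The "in particular" clause then follows at once: if the cross-ratio is real, then $y=0$, so $x+y\mathbb{S}=\{x\}$ collapses to a single point and the orbit is constant, i.e. the cross-ratio is invariant under all of $\mathbb{G}$. The only point requiring some care---and the main (mild) obstacle---is the bookkeeping in the inductive step, namely checking that the conjugating factor produced by Proposition \ref{Birconj} is a nonzero quaternion, so that Lemma \ref{normale} applies; the degenerate cases in which an intermediate point $p_3=M(q_3)$ is sent to $\infty$ are absorbed by the usual conventions for the cross-ratio on $\mathbb{H}\cup\{\infty\}$.
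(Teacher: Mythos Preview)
Your proof is correct and follows essentially the same route the paper indicates: the corollary is stated there as a direct consequence of Proposition \ref{Birconj} and Lemma \ref{normale}, with no further argument given. Your write-up simply fleshes out those two ingredients---the induction on generators for the forward inclusion, and the explicit appeal to the transitivity of the conjugation action of $\mathbb{S}^3_{\mathbb{H}}$ on $\mathbb{S}$ for the reverse inclusion---so there is no substantive difference in approach, only in level of detail.
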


Let us set $\mathcal{S}_3 =\{ q+r\mathbb{S}_\mathbb{H}^3 : q \in
\mathbb{H}, r \in \mathbb{R}^+ \setminus \{ 0 \} \}$ to be the family
of all $3-$(real)-dimensional spheres of $\mathbb{H},$ and
denote by $\mathcal{P}_3$ the family of all $3-$(real)-dimensional
affine spaces of $\mathbb{H}.$ If $\mathcal{F}_3= \mathcal{S}_3 \cup
\mathcal{P}_3,$ then we can state the following result, which closely
resembles the classical statement that holds for all fractional
linear transformations of $\mathbb{C}.$ 
\begin{prop}\label{sfretdim3}
The group $\mathbb{G}$ of all fractional, linear transformations maps
elements of $\mathcal{F}_3$ onto elements of $\mathcal{F}_3$, i.e.
it transforms the family of all $3-$spheres and $3-$dimensional,
affine planes of $\mathbb{H}$ onto itself.
\end{prop}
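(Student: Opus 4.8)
The plan is to reduce everything to the four generating transformations of Lemma \ref{struttura di G} and to encode each element of $\mathcal{F}_3$ by a single scalar equation that transforms predictably under these generators. The key device is the observation that a subset of $\mathbb{H}$ belongs to $\mathcal{F}_3=\mathcal{S}_3\cup\mathcal{P}_3$ precisely when it is the nonempty, nondegenerate locus of
\[
\alpha|q|^2 + 2\Re e(q\overline{\beta}) + \gamma = 0,\qquad \alpha,\gamma\in\mathbb{R},\ \beta\in\mathbb{H},
\]
where the case $\alpha\neq 0$ gives a $3$-sphere in $\mathcal{S}_3$ (completing the square yields $|q+\alpha^{-1}\beta|^{2}=\mathrm{const}$) and the case $\alpha=0$ gives a $3$-dimensional affine plane in $\mathcal{P}_3$. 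Since $\mathbb{G}$ is generated by the four maps $L_1,L_2,L_3,L_4$ of Lemma \ref{struttura di G}, and since this family of generators is stable under taking inverses ($L_1^{-1},L_2^{-1},L_3^{-1}$ are again of types i), ii), iii), while $L_4^{-1}=L_4$), it suffices to show that each $L_i$ sends such a locus to a locus of the same form. Surjectivity onto $\mathcal{F}_3$ then follows by applying the statement to $L_i^{-1}$.

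The generators $L_1(q)=q+b$, $L_2(q)=aq$ with $|a|=1$, and $L_3(q)=rq$ with $r>0$ are Euclidean similarities of $\mathbb{R}^4\cong\mathbb{H}$: indeed $|aq|=|q|$ shows that $L_2$ is an isometry, while $L_1$ is a translation and $L_3$ a dilation. Similarities carry $3$-spheres to $3$-spheres and affine hyperplanes to affine hyperplanes, so these three cases are immediate; alternatively one substitutes $q=L_i^{-1}(w)$ into the displayed equation and simply reads off the new coefficients $\alpha,\beta,\gamma$, which remain real/quaternionic of the required type.

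The substantive case, and the main obstacle, is the inversion $L_4(q)=q^{-1}$. Here I would write $q^{-1}=\overline{q}/|q|^2$ and substitute $q=w^{-1}$ into the unified equation. Using $|w^{-1}|^2=|w|^{-2}$ together with $\Re e(w^{-1}\overline\beta)=|w|^{-2}\Re e(\overline{w}\,\overline\beta)=|w|^{-2}\Re e(w\beta)$, where the last equality comes from $\Re e(\overline{z})=\Re e(z)$ and the cyclicity $\Re e(uv)=\Re e(vu)$, and then multiplying through by $|w|^2$, the equation becomes
\[
\gamma|w|^2 + 2\Re e(w\,\overline{\overline{\beta}}) + \alpha = 0,
\]
which is again of the canonical form with coefficients $\alpha'=\gamma$, $\beta'=\overline{\beta}$, $\gamma'=\alpha$. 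Thus $L_4$ maps $\mathcal{F}_3$ into itself. I expect the only delicate point to be the bookkeeping at $\infty$: the computation shows that $\alpha$ and $\gamma$ exchange roles under inversion, so a sphere through the origin ($\gamma=0$) is sent to a hyperplane ($\alpha'=0$) and a hyperplane not through the origin is sent to a sphere through the origin; reading the statement in $\mathbb{H}\cup\{\infty\}$, with affine hyperplanes regarded as spheres through $\infty$, makes this passage consistent and completes the proof.
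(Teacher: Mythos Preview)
Your proof is correct and follows essentially the same strategy as the paper: encode every element of $\mathcal{F}_3$ by a single real scalar equation $\alpha|q|^2+2\Re e(\beta q)+\gamma=0$ (the paper writes it as $\alpha q\overline{q}+\beta q+\overline{q}\,\overline{\beta}+\gamma=0$), then verify invariance under the four generators of Lemma~\ref{struttura di G}. The only cosmetic difference is that you dispose of $L_1,L_2,L_3$ in one line by observing they are Euclidean similarities, whereas the paper carries out the explicit substitution for each; your treatment of the inversion $L_4$ and the swap $\alpha\leftrightarrow\gamma$, $\beta\mapsto\overline{\beta}$ matches the paper's computation.
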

\begin{proof}
Indeed the family of sets $\mathcal{F}_3$ is the family of zero-sets
of the quadratic equations
\begin{equation} \label{trisfret}
\alpha (q \overline{q})+\beta q + \overline{q} \overline{\beta} +
\gamma = 0 
\end{equation}
where $\alpha, \gamma \in \mathbb{R}$ and $\beta \in \mathbb{H}.$ In
fact, if we set $q=x_0+x_1i+x_2j+x_3k$ and $\beta=\beta_0 + \beta_1 i
+\beta_2 j + \beta_3 k,$ equation (\ref{trisfret}) becomes
$$
\alpha (x_0^2+x_1^2+x_2^2+x_3^2)+ 2\Re e (\beta q) + \gamma = 0
$$
i.e.
\begin{equation}\label{esplicita}
\alpha (x_0^2+x_1^2+x_2^2+x_3^2)+ 2(\beta_0x_0-\beta_1x_1-\beta_2x_2
-\beta_3x_3) + \gamma = 0.
\end{equation}
By varying $\alpha, \gamma$ in $\mathbb{R}$ and $ \beta$ in
$\mathbb{H}$, we obtain the entire family  $\mathcal{P}_3$ as the
family of zero-sets of (\ref{esplicita}) when $\alpha = 0$, and the
entire family $\mathcal{S}_3$ when $\alpha \neq 0$.  
At this point,  it is enough to prove that the elements of
$\mathbb{G}$ transform an equation of type (\ref{trisfret}) in an
equation of the same type. If $L_1(q)=q+b,$ with $b\in \mathbb{H}$,
then equation (\ref{trisfret}) becomes
\begin{eqnarray}
\alpha( (q+b)\overline{(q+b)})+\beta (q+b) + \overline{(q+b)}\,\,
\overline{\beta} + \gamma = 0 \\ \nonumber
\alpha (q \overline{q}) + \alpha (2 \Re e (q \overline{b})) + 2
\Re e (\beta q) +\alpha |b|^2 + 2 \Re e (\beta b) +\gamma =0\\
\nonumber
\alpha (q \overline{q}) + \alpha (2 \Re e (\overline{b} q)) + 2
\Re e(\beta q) +\alpha |b|^2 + 2 \Re e (\beta b) +\gamma=0 \\
\nonumber
\alpha (q \overline{q}) + 2 \Re e ((\alpha \overline{b} +\beta)q)
+\alpha |b|^2 + 2 \Re e (\beta b) +\gamma=0 \\\nonumber
\alpha (q \overline{q}) + (\alpha \overline{b}
+\beta)q+\overline{q}\overline{(\alpha \overline{b} +\beta)}
+\alpha |b|^2 + 2 \Re e (\beta b) +\gamma=0 \nonumber
\end{eqnarray}
which is still an equation of the same type. If  $L_2(q)=aq,$ with $a
\in \mathbb{S}^3_\mathbb{H}$, then (\ref{trisfret}) becomes
\begin{eqnarray}
\alpha (aq)( \overline{aq})+ \beta (aq) + (\overline{aq})
\overline{\beta} + \gamma =0\\ \nonumber
\alpha (q \overline{q}) + (\beta a)q + \overline{q} (\overline{a}
\overline{\beta}) + \gamma =0 \\ \nonumber
\alpha (q \overline{q}) + (\beta a)q + \overline{q} (\overline{\beta
a}) + \gamma =0. \nonumber
\end{eqnarray}
Under the action of $L_3(q)=rq$, with $r\in
\mathbb{R}^+\backslash\{0\}$, equation (\ref{trisfret}) transforms
into
\begin{equation}
r^2\alpha (q \overline{q}) + (r\beta) q
+\overline{q}(\overline{r\beta})+ \gamma =0
\end{equation}
while, for $L_4(q)=q^{-1}$, it becomes
\begin{eqnarray}
\alpha + \beta \overline{q} + q \overline{\beta} + \gamma (q
\overline{q}) =0\\ \nonumber
|\beta|^2\alpha + |\beta|^2 \overline{q}\beta + \overline{\beta}q
|\beta|^2 + |\beta|^2\gamma (q \overline{q}) =0\\ \nonumber
\gamma (q \overline{q}) + \overline{\beta}q +\overline{q}\beta
+\alpha =0.\nonumber
\end{eqnarray}
What is established in lemma \ref{struttura di G} leads to the
conclusion of the proof.
\end{proof}

The geometrical properties of the elements of the group $\mathbb{G}$
are quite interesting: they are a generalization, and an extension to
higher dimensions, of the geometrical properties of the classical
group of complex fractional linear transformations. To give a clear
idea of what we mean by this, we will denote by $\mathcal{F}_i,$ for
$i=1,2,$
the family of all $i-$(real)-dimensional spheres and $i-$(real)-dimensional
affine spaces of $\mathbb{H}$ and state the following:
\begin{corollario} \label{cfreret}
The group $\mathbb{G}$ of all fractional, linear transformations maps
elements of $\mathcal{F}_2$ onto elements of  $\mathcal{F}_2$ and
elements of $\mathcal{F}_1$ onto elements of  $\mathcal{F}_1$, 
i.e. it transforms the family of all $2-$spheres and $2-$dimensional, 
affine planes of $\mathbb{H}$ onto itself and the family of all
circles and affine lines of $\mathbb{H}$ onto itself.
\end{corollario}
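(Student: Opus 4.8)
The plan is to reduce the statement to Proposition \ref{sfretdim3} by expressing the two lower-dimensional families as intersections of elements of $\mathcal{F}_3$, and then to exploit that every $L\in\mathbb{G}$ is a bijection of $\mathbb{H}\cup\{\infty\}$ which, being conformal by Lemma \ref{struttura di G}, is a homeomorphism and therefore preserves the topological dimension of the sets it moves. Throughout I would regard an affine subspace as the member of the corresponding sphere-family that passes through $\infty$, so that $\mathcal{F}_3,\mathcal{F}_2,\mathcal{F}_1$ become the families of $3$-, $2$- and $1$-dimensional spheres of $S^4=\mathbb{H}\cup\{\infty\}$ and the cases ``sphere'' and ``affine plane'' are treated uniformly.

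The key step I would establish first is a geometric characterization: (a) a set belongs to $\mathcal{F}_2$ if and only if it is a $2$-dimensional intersection $F_1\cap F_2$ of two elements of $\mathcal{F}_3$; and (b) a set belongs to $\mathcal{F}_1$ if and only if it is a $1$-dimensional intersection of an element of $\mathcal{F}_2$ with an element of $\mathcal{F}_3$. For the ``only if'' direction of (a) I would note that a $2$-plane is cut out by two $3$-planes, while a $2$-sphere $S$ lies in its affine hull $P$ (a $3$-plane) and equals $\Sigma\cap P$ for the $3$-sphere $\Sigma$ with the same centre and radius. For the ``if'' direction I would run through the three possibilities: two non-parallel $3$-planes meet in a $2$-plane; a $3$-sphere and a $3$-plane meet, when the intersection is $2$-dimensional, in a $2$-sphere; and two distinct $3$-spheres $\Sigma_1,\Sigma_2$ satisfy $\Sigma_1\cap\Sigma_2=\Sigma_1\cap P$, where $P$ is the radical $3$-plane obtained by subtracting the two equations of the form (\ref{trisfret}) after normalising their $q\overline q$-coefficients, so the intersection is again a $2$-sphere. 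Statement (b) would then follow by intersecting an element of $\mathcal{F}_2$ once more with an element of $\mathcal{F}_3$ and repeating the same case analysis one dimension down.

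Granting (a) and (b), the conclusion is immediate. Given $L\in\mathbb{G}$ and $S\in\mathcal{F}_2$, I would write $S=F_1\cap F_2$ as in (a); since $L$ is a bijection, $L(S)=L(F_1)\cap L(F_2)$, both images lie in $\mathcal{F}_3$ by Proposition \ref{sfretdim3}, and $L(S)$ is still $2$-dimensional because $L$ is a homeomorphism, so the ``if'' direction of (a) forces $L(S)\in\mathcal{F}_2$. This gives $L(\mathcal{F}_2)\subseteq\mathcal{F}_2$, and applying the same argument to $L^{-1}\in\mathbb{G}$ upgrades the inclusion to the equality $L(\mathcal{F}_2)=\mathcal{F}_2$, which is the asserted ``onto''. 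The family $\mathcal{F}_1$ is handled identically using (b).

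I expect the main obstacle to be the characterizations (a) and (b), and in particular their converses: one must verify that a genuinely lower-dimensional intersection of the codimension-one objects is again a sphere or a plane, rather than some other quadric. The device that makes this manageable is the radical-plane identity, which converts the intersection of two $3$-spheres into that of a $3$-sphere with a $3$-plane and thereby reduces every case to the single model of a $3$-sphere meeting a $3$-plane.
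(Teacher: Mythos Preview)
Your proposal is correct and follows essentially the same approach as the paper, which simply observes that every element of $\mathcal{F}_2$ and $\mathcal{F}_1$ arises as a finite intersection of elements of $\mathcal{F}_3$ and then invokes Proposition~\ref{sfretdim3}. You have fleshed out the details---the characterizations (a) and (b), the radical-plane reduction, the dimension-preservation via homeomorphism, and the ``onto'' upgrade via $L^{-1}$---that the paper's one-line proof leaves implicit.
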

\begin{proof}
Since all the elements of $\mathcal{F}_2$ and $\mathcal{F}_1$ are
obtained as finite intersections of element of $\mathcal{F}_3$, the
proof is a consequence of proposition \ref{sfretdim3}.
\end{proof}

The above Corollary  will play a key role while, in the sequel, we
will define the Poincar\'e distance on the open unit disc
$\Delta_\mathbb{H}$ of $\mathbb{H}$. To prepare the tools to be able
to give such a definition, we will study the characterizing
properties of the quaternionic cross-ratio.

\begin{teo}\label{birreal}
Four pairwise distinct points $q_1,$$q_2,$$q_3,$$q_4 \in \mathbb{H}$
lie on a same
(one-dimensional) circle if, and only if, their cross-ratio
is real. The two pairs of points $q_1,q_2$ and $q_3, q_4$ lying on a
same
circle separate each other if, and only if,
$\ratio(q_1,q_2,q_3,q_4)<0.$ 
\end{teo}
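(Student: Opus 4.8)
The plan is to normalize three of the four points to the standard positions $0,1,\infty$ by a transformation in $\mathbb{G}$ and then read off both assertions on the real axis, where they become elementary. By Proposition \ref{0 1 infinito} there is a fractional linear transformation $T\in\mathbb{G}$ with $T(q_3)=0$, $T(q_4)=\infty$ and $T(q_2)=1$; set $w=T(q_1)$. First I would record how the cross-ratio relates to $w$. Writing $P=(q_1-q_3)(q_1-q_4)^{-1}$ and $Q=(q_2-q_4)(q_2-q_3)^{-1}$, the definition of the cross-ratio gives $\ratio(q_1,q_2,q_3,q_4)=PQ$, while the explicit form of $T$ in Proposition \ref{0 1 infinito} gives $w=T(q_1)=QP=Q(PQ)Q^{-1}$. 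Thus $w$ and $\ratio(q_1,q_2,q_3,q_4)$ are conjugate quaternions; this is exactly what Proposition \ref{Birconj} and Corollary \ref{crreale} predict, namely that the transformed value $\ratio(w,1,0,\infty)$ (which a direct substitution, using $(q_1-q_4)^{-1}(q_2-q_4)\to 1$ as $q_4\to\infty$, shows equals $w$) lies in the sphere $x+y\mathbb{S}$ attached to the original value $\ratio(q_1,q_2,q_3,q_4)=x+yI$.

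Since conjugation preserves both the real part and the modulus of a quaternion, $w\in\mathbb{R}$ if and only if $\ratio(q_1,q_2,q_3,q_4)\in\mathbb{R}$, and when this happens the two values coincide, a real quaternion being central. This reduces the first assertion to the claim that $q_1,q_2,q_3,q_4$ are concircular exactly when $w\in\mathbb{R}$. By Corollary \ref{cfreret} the map $T$ carries $\mathcal{F}_1$ onto itself, so $q_1,q_2,q_3,q_4$ lie on a common circle or line precisely when their images $w,1,0,\infty$ do. But $0,1,\infty$ already determine a unique element of $\mathcal{F}_1$, the extended real axis $\mathbb{R}\cup\{\infty\}$, on which the fourth point $w$ lies if and only if $w\in\mathbb{R}$. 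Combining these observations yields the first equivalence.

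For the separation statement I would transport the circle through the four points by $T$. The elements of $\mathbb{G}$ are conformal by Lemma \ref{struttura di G}, hence homeomorphisms of $\mathbb{H}\cup\{\infty\}$, so $T$ restricts to a homeomorphism of the circle through $q_1,q_2,q_3,q_4$ onto $\mathbb{R}\cup\{\infty\}$; since ``the pair $\{q_1,q_2\}$ separates the pair $\{q_3,q_4\}$'' is a topological property of four points on a circle, it is preserved. On $\mathbb{R}\cup\{\infty\}$ the pair $\{q_3,q_4\}$ has become $\{0,\infty\}$, which splits the extended real axis into the arc of positive and the arc of negative reals, while $\{q_1,q_2\}$ has become $\{w,1\}$. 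The two pairs separate each other if and only if $w$ and $1$ lie on opposite arcs, and as $1>0$ this occurs exactly when $w<0$. Because $w=\ratio(q_1,q_2,q_3,q_4)$ whenever the cross-ratio is real, this is precisely the condition $\ratio(q_1,q_2,q_3,q_4)<0$.

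The step that requires the most care, and which is the genuinely quaternionic point of the argument, is the passage from $\ratio(q_1,q_2,q_3,q_4)$ to $w=T(q_1)$: because $\mathbb{H}$ is noncommutative these two quantities are only conjugate, not equal, so the reduction works only thanks to the observation that conjugation leaves invariant exactly the two features being tested, namely being real and, among reals, being negative. Everything else is either the elementary geometry of three marked points on the real line or a direct appeal to the invariance results already established in Corollaries \ref{crreale} and \ref{cfreret}.
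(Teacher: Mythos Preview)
Your argument is correct and follows essentially the same route as the paper: normalize $(q_2,q_3,q_4)$ to $(1,0,\infty)$ via Proposition~\ref{0 1 infinito}, observe that the transported cross-ratio $\ratio(w,1,0,\infty)=w$ is conjugate to the original one, and then read off both conclusions on the real axis using Corollary~\ref{cfreret}. Your direct verification that $w=QP=Q(PQ)Q^{-1}$ is conjugate to $PQ$ is slightly more explicit than the paper's appeal to Proposition~\ref{Birconj}, and your use of the fact that a homeomorphism of a circle preserves separation handles both directions of the second equivalence more cleanly than the paper's one-sided continuity argument, but these are cosmetic differences.
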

\begin{proof1}
The three pairwise distinct points $q_2, q_3, q_4$ determine a unique
circle (or line) $C \subset \mathbb{H}$.
In view of proposition \ref{0 1 infinito}, take $L\in \mathbb{G}$
that maps $q_2, q_3, q_4$ respectively
to $1,0,\infty$ and let $q_0=L(q_1).$ 
Then, by Corollary \ref{cfreret}, $L$ carries $C$ onto the real axis
$\mathbb{R}$ of $\mathbb{H}$ and,
by proposition \ref{Birconj}, it is such that
$\ratio(L(q_1),L(q_2),L(q_3),L(q_4))= \ratio(q_0,1,0 , \infty)=q_0$
is 
conjugated to $\ratio(q_1, q_2, q_3, q_4)$. We conclude that $q_0\in
\mathbb{R}$ if, and only if, $q_1\in C$. 
Equivalently $\ratio(L(q_1),L(q_2),L(q_3),L(q_4))= q_0 \in
\mathbb{R}$ if, and only if, $\ratio(q_1, q_2, q_3, q_4)\in
\mathbb{R}$ if, and only if, $q_1,$$q_2,$$q_3,$$q_4 \in C$.
This proves the first part of our assertion.  

\noindent To complete the proof notice that
$\ratio(q_0,1,0,\infty)=q_0<0$ if, and only if, the two pairs of
points $q_0,1$ and $0,\infty$ separate each other on the real axis.
Since $L$ maps the circular arc $A$ from
$q_3$ to $q_4$ through $q_2$ onto the positive real half axis,
then (by the continuity of $L$) the pre-image $q_1$ of $q_0$ cannot
belong to $A$. Therefore $q_1,q_2$ and $q_3, q_4$ separate
each other.
\end{proof1}
When defining the Poincar\'e distance on the open, unit disc of
$\mathbb{H}$, we will be interested in the case in which the two
pairs of points $q_1,q_2$ and $q_3,q_4$ lie on a same circle and do
not separate
each other. In this case, if we keep $q_2,q_3,q_4$ fixed and move
$q_1$ from
$q_3$ to $q_4,$ by way of $q_2,$ then,  within the environment
established in the proof of theorem \ref{birreal},  the point $q_0$
moves from $0$ to
$\infty$ by way of $1$. In view of 
\begin{equation} \label{birapp}
\ratio(q_1,q_2,q_3,q_4)=\ratio(q_0,1,0,\infty)=q_0
\end{equation}
during this procedure the cross ratio $\ratio(q_1,q_2,q_3,q_4)$ takes
on all
the positive values; in particular the value $1$ comes up when
$q_1=q_2.$ Moreover

\begin{prop} \label{arrcyc}
Let $q_1,q_2,q_3,q_4 \in \mathbb{H}$ be pairwise distinct points, 
arranged cyclically on a circle. Then 
$\ratio(q_1,q_2,q_3,q_4)>1$. Moreover, $q_{1}=q_{2}$ if, and only
 if,
$\ratio(q_1,q_2,q_3,q_4)=1.$
\end{prop}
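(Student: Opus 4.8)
The plan is to transfer the configuration to the normal form already exploited in the proof of Theorem \ref{birreal}. First I would choose, via Proposition \ref{0 1 infinito}, a transformation $L\in\mathbb{G}$ carrying $q_2,q_3,q_4$ to $1,0,\infty$ respectively, and set $q_0=L(q_1)$. By Corollary \ref{cfreret} the map $L$ sends the circle $C$ through $q_1,q_2,q_3,q_4$ onto the real axis $\mathbb{R}\cup\{\infty\}$, so $q_0\in\mathbb{R}$, and by (\ref{birapp}) one has $\ratio(q_1,q_2,q_3,q_4)=\ratio(q_0,1,0,\infty)=q_0$. Thus the whole statement is reduced to locating the single real number $q_0$ on the line.

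Next I would use the hypothesis that $q_1,q_2,q_3,q_4$ are arranged cyclically. Since $L$ is a homeomorphism of $\mathbb{H}\cup\{\infty\}$, it preserves the cyclic order along circles, so $q_0,1,0,\infty$ occur in this cyclic order on $\mathbb{R}\cup\{\infty\}$. Concretely, I would invoke the monotone motion recorded in the paragraph preceding the statement: as the first entry runs along the arc from $q_3$ to $q_4$ by way of $q_2$, the value $q_0$ increases continuously from $0$ (at $q_3$), through $1$ (at $q_2$), to $\infty$ (at $q_4$), covering every positive value exactly once. In the cyclic arrangement $q_1,q_2,q_3,q_4$ the point $q_1$ lies strictly between $q_2$ and $q_4$ on precisely this arc; hence its image $q_0$ lies strictly between the corresponding values $1$ and $\infty$, giving $q_0>1$ and therefore $\ratio(q_1,q_2,q_3,q_4)=q_0>1$.

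For the final clause the distinctness of $q_1$ and $q_2$ is dropped. Here I would simply note that $L$ is injective, so $q_0=L(q_1)=1=L(q_2)$ holds exactly when $q_1=q_2$; combined with (\ref{birapp}) this yields $\ratio(q_1,q_2,q_3,q_4)=1$ if and only if $q_1=q_2$, which also identifies the boundary value of the one-parameter family above.

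The one point requiring care is the orientation bookkeeping in the second paragraph: one must be sure that the cyclic order $q_0,1,0,\infty$ places $q_0$ on the arc of reals exceeding $1$ rather than on the arc between $0$ and $1$. The cleanest way to settle this, and to obtain the strict inequality together with the equality case simultaneously, is to let the monotone one-parameter motion of $q_0$ do the work, since the cyclic position of $q_1$ relative to $q_2$ and $q_4$ then reads off the sign of $q_0-1$ directly.
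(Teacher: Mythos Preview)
Your argument is correct and follows essentially the same route as the paper: you use the normalization $L$ from Theorem~\ref{birreal} to reduce to the identity $\ratio(q_1,q_2,q_3,q_4)=q_0$ from (\ref{birapp}), then read off $q_0>1$ from the cyclic order $q_0,1,0,\infty$ on $\mathbb{R}\cup\{\infty\}$, which is exactly the paper's proof. Your treatment is more explicit about the orientation bookkeeping and about the equality case $q_1=q_2$, but the underlying idea is identical.
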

\begin{proof1}
If $q_1,q_2,q_3,q_4$ are arranged cyclically, so are
$q_0,1,0,\infty$. Therefore, in view of (\ref{birapp}),
 we have $q_0 = \ratio(q_0,1,0,\infty)>1.$
\end{proof1}

\section{M\"obius transformations and the Poincar\'e distance on
$\Delta_{\hh}$}

We are now ready to construct a Poincar\'e-type distance, which we
will simply 
call Poincar\'e distance, on the open unit disc $\Delta_\mathbb{H}$
of $\mathbb{H}$. We will do this by developing, in the quaternionic
case, a variation
of an approach adopted by Ahlfors in a different algebraic situation
placed in the
$n-$dimensional real vector space $\mathbb{R}^{n}$, \cite{Ah3}.

\noindent We will start by defining the  non-euclidean line through
any two points
$q_1,q_2 \in \Delta_{\mathbb{H}}$. We will use a ``slicewise''
approach.

\begin{definiz}\label{noneuclidean} If $q_1\neq q_2\in
\Delta_{\mathbb{H}}$ are $\mathbb{R}-$linearly 
dependent, i.e. if they
lie on a same diameter of the disc
$\Delta_{\mathbb{H}} \subset \mathbb{H} \cong \mathbb{R}^4,$
then we define the {\it non-Euclidean line} through $q_1$
and $q_2$ to be this diameter. When $q_1,q_2$ are
$\mathbb{R}-$linearly 
independent, then they belong to a unique circle that intersects 
$\mathbb{S}^{3}_\mathbb{H}=\partial \Delta_{\mathbb{H}}$ orthogonally
and that will be
defined to be the {\it non-Euclidean line} through $q_1$
and $q_2$.
\end{definiz}

To clarify the geometrical significance of the above definition, let
us remark that any circle $C$ which intersects 
$\mathbb{S}^{3}_{\mathbb{H}}$ orthogonally 
belongs to the $2-$dimensional, real vector space $\Pi(C)$ 
spanned by the two vectors
obtained as $\mathbb{S}^{3}_\mathbb{H}\cap C$. Now, when $q_1, q_2\in
C$ are
$\mathbb{R}-$linearly independent, they span a
$2-$dimensional, real vector space $\Pi(q_{1}, q_{2})\subset
\mathbb{H}$, which obviously must coincide with $\Pi(C)$. Therefore
$C$ is the classical non-Euclidean line of the
$2-$(real)-dimensional, open,
unit disc
$\Delta_{\mathbb{H}}\cap \Pi(q_{1}, q_{2})$ passing through $q_1,
q_2$.

\begin{teo}\label{quattropunti}
For any given $q_1,q_2 \in  \Delta_{\mathbb{H}}$,  with $q_1 \neq
q_2,$ the unique 
non-Euclidean line $l$ containing $q_1$ and $q_2$ is
the circle or the straight line determined by the four points
$q_1,q_2,\overline{q_1}^{-1},\overline{q_2}^{-1}.$
\end{teo}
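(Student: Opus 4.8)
The plan is to reduce the statement to the classical two–dimensional picture of the Poincar\'e disc inside the real $2$–plane spanned by $q_1$ and $q_2$. The first observation I would make is that for any $q\in\Delta_{\mathbb{H}}\setminus\{0\}$ one has $\overline{q}^{-1}=q/|q|^2\in\mathbb{R}^{+}q$, so $\overline{q_1}^{-1}$ and $\overline{q_2}^{-1}$ lie on the real lines $\mathbb{R}q_1$ and $\mathbb{R}q_2$. If $q_1,q_2$ are $\mathbb{R}$--linearly dependent (the diameter case, which also absorbs $q_1=0$ or $q_2=0$, where $\overline{q_i}^{-1}=\infty$), then all four points lie on the single line $\mathbb{R}q_1=\mathbb{R}q_2$, which is exactly the diameter of Definition \ref{noneuclidean}, and there is nothing further to prove. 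So I would then assume $q_1,q_2$ are $\mathbb{R}$--linearly independent and set $\Pi=\Pi(q_1,q_2)$ to be the $2$--plane they span; since $\overline{q_i}^{-1}\in\mathbb{R}q_i\subset\Pi$, all four points lie in $\Pi$.

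Next I would prove concyclicity by checking that $\ratio(q_1,q_2,\overline{q_1}^{-1},\overline{q_2}^{-1})$ is real and invoking Theorem \ref{birreal}. Writing $\overline{q_i}^{-1}=q_i|q_i|^{-2}$ and factoring out the real scalars $(|q_i|^2-1)/|q_i|^2$, the differences $q_i-\overline{q_i}^{-1}$ become real multiples of $q_i$; using the identities $q_1-\overline{q_2}^{-1}=(q_1\overline{q_2}-1)\overline{q_2}^{-1}$ and $q_2-\overline{q_1}^{-1}=(q_2\overline{q_1}-1)\overline{q_1}^{-1}$, the cross-ratio collapses, after setting $u=q_1\overline{q_2}$ and noting $q_2\overline{q_1}=\overline{u}$, to a positive real multiple of $u(u-1)^{-1}\overline{u}(\overline{u}-1)^{-1}$. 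The crucial point is that $u$ and $\overline{u}$ generate a commutative subfield $\mathbb{R}+\mathbb{R}I_u$ of $\mathbb{H}$, so this last product may be computed exactly as in $\mathbb{C}$ and equals $|u|^2/|u-1|^2\in\mathbb{R}^{+}$. Hence the cross-ratio is a positive real number and the four (distinct) points lie on a common circle $C'\subset\Pi$.

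It then remains to identify $C'$ with the non-Euclidean line. The inversion $\sigma(q)=\overline{q}^{-1}$ in $\mathbb{S}^3_{\mathbb{H}}$ preserves $\Pi$ (it multiplies by a real scalar) and permutes the four points, swapping $q_1\leftrightarrow\overline{q_1}^{-1}$ and $q_2\leftrightarrow\overline{q_2}^{-1}$; since these four points are distinct ($|\overline{q_i}^{-1}|>1>|q_j|$) and determine $C'$ uniquely, we get $\sigma(C')=C'$, and a circle distinct from $\mathbb{S}^3_{\mathbb{H}}$ that is invariant under inversion in it meets it orthogonally. Thus $C'$ passes through $q_1,q_2$ and is orthogonal to $\mathbb{S}^3_{\mathbb{H}}$, so by the uniqueness in Definition \ref{noneuclidean} it is the non-Euclidean line. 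Equivalently, identifying $\Pi$ with $\mathbb{C}$ by an orthonormal basis sends $q_i\mapsto w_i$ and $\overline{q_i}^{-1}\mapsto 1/\overline{w_i}$, so the remark following Definition \ref{noneuclidean} reduces the whole assertion to the classical fact that the geodesic through $w_1,w_2$ in the Poincar\'e disc is the circle through $w_1,w_2,1/\overline{w_1},1/\overline{w_2}$.

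The two delicate points, which I expect to be the main obstacle, are the following. First, the degenerate cases ($q_1$ or $q_2$ equal to $0$, and the $\mathbb{R}$--linearly dependent case) must be folded uniformly into the ``straight line'' alternative, treating $\overline{q_i}^{-1}=\infty$ correctly in $\mathbb{H}\cup\{\infty\}$. Second, and more substantively, one must justify that orthogonality of $C'$ to the $3$--sphere $\mathbb{S}^3_{\mathbb{H}}$ coincides with orthogonality of $C'$ to the boundary circle $\mathbb{S}^3_{\mathbb{H}}\cap\Pi$ of the slice, i.e.\ that passing to the $2$--plane $\Pi$ loses no geometric information; this is exactly the content of the remark after Definition \ref{noneuclidean} and is what legitimately transports the classical planar result into $\mathbb{H}$.
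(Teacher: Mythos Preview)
Your proof is correct and follows essentially the same route as the paper: reduce to the real $2$--plane $\Pi(q_1,q_2)$, show that the cross-ratio of the four points is real (hence they are concyclic by Theorem~\ref{birreal}), and then identify $\Pi$ with $\mathbb{C}$ to invoke the classical planar fact that the hyperbolic geodesic through $w_1,w_2$ is the circle through $w_1,w_2,1/\overline{w_1},1/\overline{w_2}$. Your explicit reduction of the cross-ratio via the commutative slice $\mathbb{R}+\mathbb{R}I_u$ (with $u=q_1\overline{q_2}$) and your alternative orthogonality argument by invariance under the inversion $q\mapsto\overline{q}^{-1}$ are pleasant enrichments not present in the paper's terser treatment, which simply records the real value of the cross-ratio and then appeals to the complex case.
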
 
\begin{proof1}
Suppose $q_1 ,q_2$ are $\mathbb{R}-$linearly independent. Since
$\overline{q_1}^{-1} =q_1|q_1|^{-2}$ and
$\overline{q_2}^{-1}=q_2|q_2|^{-2}$,
the four given points determine the $2-$dimensional real subspace
$\Pi(q_{1}, q_{2})$
spanned by $q_1, q_2$.
An easy computation shows that
\begin{equation*}
\ratio(q_1,q_2,\overline{q_1}^{-1},\overline{q_2}^{-1})=|q_1|^2|q_2|^2-q_2\overline{q_1}-q_1\overline{q_2}
+ 1 \in \mathbb{R}.
\end{equation*} 
Thus $q_1,q_2,\overline{q_1}^{-1},\overline{q_2}^{-1}$ lie on a same
circle $l\subset \Pi(q_{1}, q_{2}).$ Finally, to prove that the circle
$l$ is orthogonal to $\mathbb{S}^3_{\mathbb{H}} ,$ we notice that the
points 
$q_1,q_2, q_1|q_1|^{-2}, q_2|q_2|^{-2}\in \Pi(q_{1}, q_{2})\equiv
\mathbb{R}^2$, when placed on the complex plane $\mathbb{C}$ via the
identification $\mathbb{R}^2 \cong \mathbb{C},$ can still be written
as
$q_1,q_2,\overline{q_1}^{-1},\overline{q_2}^{-1}$.
Therefore the proof reduces to the classical proof for the complex
plane (see, e.g., \cite{Siegel}).  The remaining case in which $q_1,
q_2$ are $\mathbb{R}-$linearly dependent is straightforward.
\end{proof1}  

We now turn our attention to investigate the structure of the group
$\mathbb{M}$ of {\it M\"{o}bius transformations},
i.e. of the subgroup $\mathbb{M}$ of $\mathbb{G}$ consisting of all
fractional linear transformations 
 mapping the quaternionic, open, unit disc $\Delta_\mathbb{H}$ onto
itself. First of all we recall that, once
	     named $H=\left[ \begin{array}{rr}

             1 & 0 \\
             0 & -1 \\
             \end{array} \right]$, the (classical) group of matrices 
(with quaternionic entries)
$Sp(1, 1)$ is defined as (see, e.g., \cite{GOV})
\begin{equation}\label{sp11}
    Sp(1,1)=\left\{  A\in M(2, \mathbb{H})\  :\ \  ^{t}\overline{A}HA=H 
    \right\}
\end{equation}
and it can be written equivalently as (see, e.g., \cite{CPW})
\begin{equation*}
    Sp(1,1)=\left\{  \left[ \begin{array}{ll}

             a & b \\
             c & d \\
             \end{array} \right] : |a|=|d|, \ \ |b|=|c|, \ \  |a|^2 -|c|^2 =1, 
	     \ \ 
\overline{a}b=\overline{c}d, \ \ a\overline{c}=b\overline{d}
    \right\}.
\end{equation*}
In terms of $Sp(1,1)$, we can rephrase and complete a result of \cite{CPW} as
follows:
\begin{teo} \label{discmoeb} 
The quaternionic, fractional linear transformation defined by $g(q)=(aq+b)(cq+d)^{-1}$  is a M\"obius 
transformation of $ \Delta_{\mathbb{H}}$ if and only if $\left[ \begin{array}{ll}

             a & b \\
             c & d \\
             \end{array} \right] \in Sp(1,1)$.
  Moreover the map 
\begin{eqnarray}
   &\phi: Sp(1,1) \to \mathbb{M} \nonumber \\
   &A=\left[ \begin{array}{ll}
             a & b \\
             c & d \\
             \end{array} \right]\mapsto
L_{A}(q)=(aq+b)\cdot(cq+d)^{-1}
    \end{eqnarray}
    is a group homomorphism 
    whose kernel is the center of $Sp(1,1)$, that is the
    subgroup 
    $$\left\{\pm \left[ \begin{array}{ll}

             1& 0 \\
             0 & 1\\
             \end{array} \right] 
\right\}.
	     $$
\end{teo}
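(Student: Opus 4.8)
The plan is to realise the unit disc through the Hermitian form attached to $H$ and to read off the defining relation of $Sp(1,1)$ from the way $L_A$ moves that form. For $q\in\hh$ put $v=\left[\begin{array}{l} q \\ 1\end{array}\right]$; then ${}^{t}\overline{v}Hv=|q|^{2}-1$, so $q$ lies in $\Delta_{\hh}$, on $\partial\Delta_{\hh}$, or outside $\overline{\Delta_{\hh}}$ according to the sign of ${}^{t}\overline{v}Hv$. Since $Av=\left[\begin{array}{l} aq+b \\ cq+d\end{array}\right]$, the identity ${}^{t}\overline{(Av)}H(Av)={}^{t}\overline{v}\,({}^{t}\overline{A}HA)\,v$ together with $|aq+b|^{2}-|cq+d|^{2}=|cq+d|^{2}(|L_A(q)|^{2}-1)$ yields the master formula $|cq+d|^{2}\,(|L_A(q)|^{2}-1)={}^{t}\overline{v}\,({}^{t}\overline{A}HA)\,v$, valid whenever $cq+d\neq 0$. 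Everything follows from analysing this single equation.

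For the implication $A\in Sp(1,1)\Rightarrow L_A\in\mathbb{M}$ I would substitute ${}^{t}\overline{A}HA=H$ into the master formula to get $|L_A(q)|^{2}-1=(|q|^{2}-1)/|cq+d|^{2}$. First $L_A$ must be defined on all of $\overline{\Delta_{\hh}}$: the equivalent description of $Sp(1,1)$ gives $|d|^{2}=1+|c|^{2}$, so either $c=0$ (whence $b=0$, $|d|=1$, $cq+d=d\neq0$) or the pole $-c^{-1}d$ has modulus $|d|/|c|>1$ and lies outside $\overline{\Delta_{\hh}}$. The master formula then gives $|L_A(q)|<1\Leftrightarrow|q|<1$, so $L_A$ maps $\Delta_{\hh}$ into itself; applying the same to $A^{-1}\in Sp(1,1)$, so that $L_A^{-1}=L_{A^{-1}}$ also preserves $\Delta_{\hh}$, shows $L_A$ is onto, i.e. $L_A\in\mathbb{M}$.

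The converse is the heart of the matter. Assume $L_A\in\mathbb{M}$; by continuity and bijectivity $L_A$ carries $\partial\Delta_{\hh}$ onto itself, so $|aq+b|^{2}=|cq+d|^{2}$ whenever $|q|=1$. Expanding with $\overline q=q^{-1}$ gives $|a|^{2}+|b|^{2}+2\Re e(aq\overline{b})=|c|^{2}+|d|^{2}+2\Re e(cq\overline{d})$ for all unit $q$. The map $q\mapsto\Re e(aq\overline{b})-\Re e(cq\overline{d})=\Re e\big(q(\overline{b}a-\overline{d}c)\big)$ is a real-linear functional, and a linear functional is constant on the unit sphere only if it vanishes identically; testing on $q\in\{1,i,j,k\}$ forces $\overline{b}a=\overline{d}c$, i.e. $\overline{a}b=\overline{c}d$, while the constant parts give $|a|^{2}+|b|^{2}=|c|^{2}+|d|^{2}$. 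These are exactly the statements that the Hermitian matrix ${}^{t}\overline{A}HA=\left[\begin{array}{ll} |a|^{2}-|c|^{2} & \overline{a}b-\overline{c}d \\ \overline{b}a-\overline{d}c & |b|^{2}-|d|^{2}\end{array}\right]$ equals $\mu H$, where $\mu:=|a|^{2}-|c|^{2}=|d|^{2}-|b|^{2}$. Invertibility of $A$ excludes $\mu=0$, and feeding $\mu<0$ back into the master formula would send the interior of the disc to the exterior, contradicting $L_A\in\mathbb{M}$; hence $\mu>0$. Replacing $A$ by $A/\sqrt{\mu}$, which leaves $L_A$ unchanged, produces a representative with ${}^{t}\overline{A}HA=H$, i.e. an element of $Sp(1,1)$.

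It remains to treat $\phi$. The Binet property (Proposition \ref{Binet}) applied to ${}^{t}\overline{A}HA=H$, with $det_{\hh}(H)=1$ and the transpose--conjugation invariance of $det_{\hh}$ (immediate from Definition \ref{determinante}), gives $det_{\hh}(A)^{2}=1$, so $Sp(1,1)\subset SL(2,\hh)$. Thus $\phi$ is just the restriction to $Sp(1,1)$ of the homomorphism $\Phi$ of Corollary \ref{homoSl}, and it is a homomorphism onto $\mathbb{M}$ by the two implications above. Its kernel is $Sp(1,1)\cap\ker\Phi=Sp(1,1)\cap\{\pm I_{2}\}=\{\pm I_{2}\}$, and a direct check shows $\{\pm I_{2}\}$ is central in $Sp(1,1)$. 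The step I expect to require the most care is the converse direction: passing correctly from the scalar boundary condition to the matrix identity ${}^{t}\overline{A}HA=\mu H$, and in particular ruling out the orientation-reversing case $\mu<0$ before normalising.
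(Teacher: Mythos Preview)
The paper does not give its own proof of this theorem: it is presented as a rephrasing and completion of a result of Cao--Parker--Wang \cite{CPW}, with no proof environment following the statement. So there is nothing to compare against line by line; your proposal stands as an independent proof, and it is correct. The Hermitian-form approach via ${}^{t}\overline{v}Hv$ and the master formula $|cq+d|^{2}(|L_A(q)|^{2}-1)={}^{t}\overline{v}\,({}^{t}\overline{A}HA)\,v$ is precisely the natural mechanism behind this kind of result, and your reduction of the boundary condition to ${}^{t}\overline{A}HA=\mu H$ is clean and accurate.

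Two small refinements. First, the paper's standing convention (stated immediately after Corollary~\ref{homoSl}) is that matrices representing fractional linear transformations are taken in $SL(2,\hh)$. Under this convention you do not need to rescale: Binet applied to ${}^{t}\overline{A}HA=\mu H$ gives $\mu^{2}=det_{\hh}(\mu H)=det_{\hh}({}^{t}\overline{A})\,det_{\hh}(H)\,det_{\hh}(A)=1$, and your sign argument forces $\mu=1$, so $A\in Sp(1,1)$ on the nose, matching the literal ``if and only if'' in the statement. Second, you identify $\ker\phi=\{\pm I_{2}\}$ and check that $\{\pm I_{2}\}$ is central, but the theorem also asserts that $\{\pm I_{2}\}$ \emph{equals} the center of $Sp(1,1)$. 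That needs one more line: a central element must commute with every diagonal matrix $\mathrm{diag}(u,v)$, $|u|=|v|=1$, forcing all four entries to be real; it must then commute with (say) $\left[\begin{smallmatrix}\cosh t & \sinh t\\ \sinh t & \cosh t\end{smallmatrix}\right]\in Sp(1,1)$, forcing it to be scalar, and the $Sp(1,1)$ relation then gives $\pm I_{2}$.
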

\noindent Notice, in particular, that for all  $A\in Sp(1,1),$ we have 
\begin{equation}
det_{\mathbb{H}}(A)=\sqrt{|a|^{4}+|c|^{4}-2|a|^{2}|c|^{2}}=(|a|^{2}-|c|^{2})=1
\end{equation}	       
and hence $Sp(1,1)\subset SL(2, \mathbb{H})$.
\vskip 0.2cm
	     By means of the statement of theorem \ref{discmoeb} we are
able to obtain, for the
quaternionic 
M\"obius transformations, a characterization  which closely resembles
the classical
representation of the complex M\"obius transformations. A similar
result is stated without proof in \cite{HJ}.

\begin{teo}\label{teofonda}
Each quaternionic M\"{o}bius transformation
$g(q)=(aq+b)\cdot(cq+d)^{-1}\in \mathbb{M}$
can be
written uniquely as:
\begin{equation} \label{formafond}
g(q)=\alpha (q - q_0)(1-\overline{q_0}q)^{-1}  \beta^{-1}
\end{equation}
where $q_0 = -u \tanh(t) = -a^{-1} b \in \Delta_{\mathbb{H}}$ and
where
$\alpha =\dfrac{a}{|a|}\in \mathbb{S}^{3}_{\mathbb{H}}$, 
$\beta = \dfrac{d}{|d|} \in \mathbb{S}^3_{\mathbb{H}}.$ 
\end{teo}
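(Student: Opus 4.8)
The plan is to work entirely within the matrix picture supplied by Theorem~\ref{discmoeb}: since $g\in\mathbb{M}$, we may write $g(q)=(aq+b)(cq+d)^{-1}$ where the matrix $A$ with entries $a,b,c,d$ lies in $Sp(1,1)$, and I would then massage numerator and denominator separately using the defining relations $|a|=|d|$, $|b|=|c|$, $|a|^2-|c|^2=1$, $\overline a b=\overline c d$, $a\overline c=b\overline d$. First I would factor the numerator on the left as $aq+b=a(q-q_0)$ with $q_0=-a^{-1}b$. For the denominator, I would invoke the relation $a\overline c=b\overline d$, which on taking conjugates gives $c=d\,\overline b\,\overline a^{-1}$; since $\overline{q_0}=-\overline b\,\overline a^{-1}$, this reads $c=-d\,\overline{q_0}$, whence $cq+d=d(1-\overline{q_0}q)$. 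Substituting both factorizations yields $g(q)=a(q-q_0)(1-\overline{q_0}q)^{-1}d^{-1}$. Writing $a=|a|\alpha$ and $d=|d|\beta$ with $\alpha=a/|a|$, $\beta=d/|d|$, and pulling the real (hence central) scalars $|a|,|d|$ through the product, the factor $|a|/|d|$ collapses to $1$ by $|a|=|d|$, giving the asserted form $g(q)=\alpha(q-q_0)(1-\overline{q_0}q)^{-1}\beta^{-1}$.

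Next I would verify that the resulting data are admissible. That $\alpha,\beta\in\mathbb{S}^{3}_{\mathbb{H}}$ is immediate from $|\alpha|=|\beta|=1$. For $q_0\in\Delta_{\hh}$ I would compute $|q_0|=|a^{-1}b|=|b|/|a|=|c|/|a|$, using $|b|=|c|$; since $|a|^2-|c|^2=1$ forces $|c|<|a|$, we obtain $|q_0|<1$, as required.

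For uniqueness, the key observation is that $q_0$ is forced by $g$ alone: evaluating the canonical form at $q=q_0$ gives $g(q_0)=0$, and since $g$ is a bijection of $\Delta_{\hh}$ onto itself with $0\in\Delta_{\hh}$, the point $q_0=g^{-1}(0)$ is uniquely determined. Given $q_0$, set $\psi(q)=(q-q_0)(1-\overline{q_0}q)^{-1}$, itself a M\"obius transformation carrying $\Delta_{\hh}$ bijectively onto $\Delta_{\hh}$. If two representations agree, then $\alpha\psi(q)\beta^{-1}=\alpha'\psi(q)\beta'^{-1}$ for all $q$; as $\psi(q)$ ranges over an open subset of $\hh$ and the identity $\alpha w\beta^{-1}=\alpha' w\beta'^{-1}$ is $\rr$-linear in $w$, it then holds for every $w\in\hh$. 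Taking $w=1$ gives $\alpha\beta^{-1}=\alpha'\beta'^{-1}=:\gamma$; substituting $\beta^{-1}=\alpha^{-1}\gamma$ and $\beta'^{-1}=\alpha'^{-1}\gamma$ reduces the identity to $\alpha w\alpha^{-1}=\alpha' w\alpha'^{-1}$ for all $w$, so that $\alpha'^{-1}\alpha$ commutes with every quaternion and is therefore real of modulus one. Hence $\alpha'=\pm\alpha$, with $\beta'=\pm\beta$ under the same sign.

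The existence half is essentially bookkeeping, so the delicate points are purely non-commutative: one must factor the numerator on the left as $a(q-q_0)$ but also recognize the denominator as $d(1-\overline{q_0}q)$ with $d$ again on the left, and the relation that makes this work is precisely $a\overline c=b\overline d$ rather than $\overline a b=\overline c d$. The genuinely subtle point is the sign in the uniqueness statement: the representation is determined only up to replacing $(\alpha,\beta)$ by $(-\alpha,-\beta)$, which is exactly the ambiguity coming from the kernel $\{\pm I\}$ of $\phi$ in Theorem~\ref{discmoeb}. I would therefore phrase uniqueness modulo this common sign, or else fix it by a normalization such as $\Re e(\alpha)\ge 0$.
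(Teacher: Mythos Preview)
Your argument is correct. The paper takes a slightly different route for existence: it first parametrizes the $Sp(1,1)$ entries via hyperbolic functions, writing $a=\alpha\cosh t$, $b=\gamma\sinh t$, $c=\delta\sinh t$, $d=\beta\cosh t$ with $\alpha,\beta,\gamma,\delta\in\mathbb{S}^3_{\mathbb{H}}$, then uses $\overline a b=\overline c d$ to introduce $u=\alpha^{-1}\gamma=\delta^{-1}\beta$ and pulls out the factors $\gamma u^{-1}\cosh t$ and $\delta u\cosh t$ to reach the canonical form with $q_0=-u\tanh t$. It also sketches an alternative proof by first handling the isotropy at $0$ (where $b=c=0$ forces $g(q)=aqd^{-1}$) and then composing with $(q+p_0)(1+\overline{p_0}q)^{-1}$. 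Your approach is more direct than either: you factor numerator and denominator in one stroke, pinpointing that it is the relation $a\overline c=b\overline d$ (conjugated) rather than $\overline a b=\overline c d$ that yields $c=-d\,\overline{q_0}$ and hence the left factorization $cq+d=d(1-\overline{q_0}q)$.

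You also go beyond the paper on uniqueness: the paper asserts it but does not argue it in either proof. Your observation that $q_0=g^{-1}(0)$ is forced, followed by the linear identity $\alpha w\beta^{-1}=\alpha' w\beta'^{-1}$ for all $w\in\hh$, is clean and correct, and your identification of the residual sign ambiguity $(\alpha,\beta)\mapsto(-\alpha,-\beta)$ as precisely the kernel $\{\pm I\}$ of $\phi$ in Theorem~\ref{discmoeb} is the right diagnosis. Strictly speaking the theorem's ``uniquely'' should be read modulo this sign, as you note.
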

\begin{proof1} \\
By $|a|^2-|c|^2=1$ and by $|d|^2-|b|^2=1$ stated in Proposition
\ref{discmoeb}, we obtain:
$$ a= \alpha \cosh(t) \,\,\, , \,\,\, b=\gamma \sinh(t)$$
$$ d=\beta \cosh(t)    \,\,\, , \,\,\, c= \delta \sinh(t)$$
with $\alpha, \beta, \gamma, \delta \in \mathbb{S}^3_{\mathbb{H}}.$
By $\overline{a}b=\overline{c}d,$ we have:
$$\overline{\alpha}\gamma =\overline{\delta}\beta. $$
We recall that $\overline{\alpha}=\alpha^{-1}$ because $|\alpha|=1,$
hence:
$$ \alpha^{-1} \gamma = \delta ^{-1} \beta .$$ 
If $u:=\alpha^{-1} \gamma=\delta^{-1} \beta,$ then
$$\alpha = \gamma u^{-1}$$
$$\beta =\delta u.$$
Finally 
$$G= \left[ \begin{array}{ll}

             \gamma u^{-1} \cosh(t) & \gamma \sinh(t) \\
             \delta \sinh(t) & \delta u \cosh(t) \\
             \end{array} \right] $$
and the fractional linear map associated to $G$ becomes:
$$ g(q) = ((\gamma u^{-1} \cosh(t))q + \gamma \sinh(t))(\delta
\sinh(t))q +\delta u \cosh(t))^{-1}.$$
We extract $\gamma u^{-1} \cosh(t)$ from the first factor and $\delta
u \cosh(t)$ from the second factor and we obtain:
$$g(q)= (\gamma u^{-1} \cosh(t))( q + u \tanh(t))(u^{-1}
\tanh(t) q + 1)^{-1}(\delta u \cosh(t))^{-1}= $$
$$=(\gamma u^{-1})( q + u \tanh(t))(u^{-1} \tanh(t) q +
1)^{-1} u^{-1} \delta^{-1}$$
where $\gamma,$ $u,$ $\delta$ $\in \mathbb{S}^3_{\mathbb{H}}$ and $u
\tanh(t)
=\overline{u^{-1} \tanh(t)}.$
Therefore the M\"{o}bius transformations of the unit disc are of the
form:
$$g(q)= \alpha ( q - q_0)(1- \overline{q_0}q)^{-1}
\beta ^{-1}$$
where $\alpha, \beta \in \mathbb{S}^3_{\mathbb{H}}$ and where 
$q_0 = -u \tanh(t) = -a^{-1} b \in \Delta_{\mathbb{H}}$.
It is now an easy exercise to verify that the maps of the form
(\ref{formafond}) 
transform the unit disc of $\mathbb{H}$ onto itself. 
Indeed:
$$ 1-\alpha (q - q_0)(1-(\overline{q_0})q)^{-1}\beta^{-1}
\overline{\beta^{-1}} 
(1-\overline{q} q_0)^{-1} 
(\overline{q} - \overline{q_0}) \overline{\alpha}=$$
$$= 1 - |(q-q_0)|^2 |(1-\overline{q_0}q)^{-1}|^2=$$
$$=(|1- \overline{q_0}q|^2 - |q - q_0|^2)
|(1-\overline{q_0}q)^{-1}|^2=$$
$$=(1-\overline{q}q_{0}-\overline{q_{0}}q+|q_{0}|^{2}|q|^{2}-|q|^{2}+q\overline{q_{0}}+
q_{0}\overline{q}-|q_{0}|^{2}) |(1-\overline{q_0}q)^{-1}|^2=$$
$$=(1-q_0 \overline{q_0})(1-q \overline{q}) 
|(1-\overline{q_0}q)^{-1}|^2$$
because $2 \Re e (\overline{q} q_0) - 2 \Re e (q \overline{q_0})=0.$
\vskip 0.5cm
An alternative proof can be obtained directly as follows. 
If $g(q)=(aq+b) (cq+d)^{-1}\in \mathbb{G}$ belongs to the group
of the M\"obius transformations $\mathbb{M}$ and fixes
$0,$ then $b=0$  and, by the given characterization of $Sp(1,1)$, 
$c=0$ and $|a|=|d|=1$. 
Therefore each M\"{o}bius transformation which fixes
0 is of type $g(q)=a q  d^{-1}$.
Now let $g$ be a M\"{o}bius transformation such that
$g(0)=-p_{0}=bd^{-1}.$ 
If we compose $g$ with $h(q)=(q +p_{0}) (1+\overline{p_{0}} q)^{-1},$ 
then $(h \circ g)$ fixes $0,$ and hence $(h
\circ g)(q)=a q d^{-1}$. Finally
$$
g(q)=h^{-1} (a  q d^{-1})=(a q  d^{-1} -
p_{0})(1 - \overline{p_{0}}  a q 
d^{-1})^{-1}
$$
$$
=a( q - \overline{a} p_{0}d) \overline{d} d (1 - \overline{d}
\overline{p_{0}}a  q)^{-1} d^{-1} 
$$
$$
= a(q-q_{0})(1-\overline{q_0} q)^{-1} d^{-1}$$
where $q_0= \overline{a} p_{0}d=a(-bd^{-1})d=-a^{-1}b.$
\end{proof1}

As we already mentioned, the M\"obius transformations form a subgroup
of the group $\mathbb{G}$ of all fractional linear transformations of
$\mathbb{H}$. It is of interest to consider two M\"obius
transformations in their form (\ref{formafond}) and find the form
(\ref{formafond}) of their composition. Indeed, given the two
transformations
\begin{equation} \label{formafonda}
g_1(q)=a(q - q_0)(1-\overline{q_0}q)^{-1} b^{-1}
\end{equation}
and
\begin{equation} \label{formafondb}
g_2(q)= c(q - p_0)(1-\overline{p_0}q)^{-1} d^{-1}
\end{equation}
(with $|a|=|b|=|c|=|d|=1$ and $|q_0|<1$, $|p_0|<1$) it is easy to
verify that the composition $g= g_1\circ g_2$ is the transformation
associated to the matrix
$$ \left[ \begin{array}{ll}

             a & -aq_0 \\
           - b\overline{q_0} & b \\
             \end{array} \right]
             \left[ \begin{array}{ll}
              c & -cp_0 \\
             -d\overline{p_0} & d \\
             \end{array} \right] =
             \left[ \begin{array}{ll}
             ac + a q_0 d \overline{p_0} & -(ac p_0 + a q_0 d )  \\
             -(b \overline{q_0} c + bd \overline{p_0}) & b
\overline{q_0} c p_0 + bd \\
             \end{array} \right].
$$
Since
$$
|ac+a q_0 d \overline{p_0}|=|ac||1 +c^{-1}q_0 d \overline{p_0}|= [1 +
|p_0|^2 |q_0|^2 + 2 \Re e (p_0\overline{d}\overline{q_0} c)]^{1/2}
$$ 
$$
|b \overline{q_0} c p_0 + bd|=|bd||1+ d^{-1}\overline{q_0} c p_0|= [1
+|p_0|^2|q_0|^2 + 2 \Re e (\overline{p_0} \,\,\, \overline{c} q_0
d)]^{1/2}
$$
and since
$$
\Re e (p_0\overline{d}\overline{q_0} c)=\Re e
(cp_0\overline{d}\overline{q_0} )=\Re e
(\overline{(cp_0)}\overline{(\overline{d}\overline{q_0} )})=\Re e
(\overline{p_0} \,\,\, \overline{c} q_0 d)
$$
then
$$
|ac+a q_0 d \overline{p_0}|=|b \overline{q_0} c p_0 + bd|.
$$
Moreover
\begin{eqnarray*}
\overline{(ac+a q_0 d \overline{p_0})}( acp_0 + aq_0d)
=(p_0 \overline{d} \overline{q_0}\, \overline{a} + \overline{c}\,
\overline{a})( acp_0 + aq_0d)\\
=p_0 + p_0|q_0|^2 + \overline{c}q_0 d + p_0 \overline{d}
\overline{q_0} c p_0
\end{eqnarray*}
and
\begin{eqnarray*}
\overline{(b \overline{q_0} c p_0 + bd)} (b \overline{q_0}c + bd
\overline{p_0})
=(\overline{d}\,\overline{b}+ \overline{p_0}\,\overline{c} q_0
\overline{b}) (b \overline{q_0}c + bd \overline{p_0})\\
=\overline{p_0} + 
\overline{p_0}|q_0|^2 + \overline{d} \overline{q_0} c +
\overline{p_0} \,\,\, \overline{c} q_0 d \overline{p_0}.
\end{eqnarray*}
In conclusion we can write
\begin{equation} \label{formafondcomp}
g(q)=\alpha (q - w_0)(1-\overline{w_0}q)^{-1} \beta^{-1}
\end{equation}
where 
$$
\alpha =  \frac{(ac + a q_0 d \overline{p_0})}{|ac + a q_0 d
\overline{p_0}|},
\hskip 1.5cm
\beta=  \frac{bd+b \overline{q_0} c p_0}{ |bd+b \overline{q_0} c
p_0|},
$$
$$
w_0=(ac + a q_0 d \overline{p_0} )^{-1}(ac p_0 + a q_0 d)=\frac{p_0 +
p_0|q_0|^2 + \overline{c}q_0 d + p_0 \overline{d} \overline{q_0} c
p_0}{|ac + a q_0 d \overline{p_0}|^2}.
$$
Notice that the inverse of the M\"obius transformation 
$g(q)=\alpha (q - q_0)(1-\overline{q_0}q)^{-1}  \beta^{-1}$ is given
by $g^{-1}(q)=\alpha^{-1}(q + \alpha
q_0\overline{\beta})(1+\beta\overline{q_0}\ \overline{\alpha}q)^{-1}\beta
$.

\begin{nota} The determinant $det_{\mathbb{H}}(M)$ of the matrix $M$
associated 
to the M\"obius transformation
$g(q)=\alpha  (q - q_0)(1-\overline{q_0}q)^{-1}  \beta^{-1}$
is equal to $(1-|q_{0}|^{2})$, in accordance with what happens in
the
complex case.
\end{nota}

We are now ready to develop the announced geometric approach to the definition of the 
quaternionic Poincar\'e  distance on 
$\Delta_{\mathbb{H}}$. To this end, we consider the non-euclidean
line 
$l$ determined by
$q_1, q_2 \in \Delta_{\mathbb{H}}$ as given in definition
\ref{noneuclidean} and call {\it ends} of $l$ the two intersection
points $l\cap \mathbb{S}^{3}_{\mathbb{H}}$.
We name $q_3$ and $q_4$ such ends, so that
$q_1,$$q_2,$$q_3,$$q_4$ are arranged cyclically on $l$.
Then we define:
\begin{equation}\label{definizionedistanza}
\delta_{\Delta}(q_1,q_2)=\frac{1}{2}\log (\ratio(q_1,q_2,q_3,q_4))
\end{equation}
to be the {\it Poincar\'e distance} between $q_1$ and $q_2.$ 

\begin{prop}\label{distanza} The map $\delta_{\Delta}(q_1,q_2)=\frac{1}{2}\log
    (\ratio(q_1,q_2,q_3,q_4))$ defined in (\ref{definizionedistanza}) 
    is a distance.
\end{prop}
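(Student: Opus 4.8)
The plan is to prove the statement by showing that $\delta_\Delta$ coincides with an explicit, manifestly symmetric expression, thereby reducing positivity/definiteness and the triangle inequality to one algebraic identity. First I would record that every M\"obius transformation $g\in\mathbb{M}$ is a $\delta_\Delta$-isometry: by conformality (Lemma \ref{struttura di G}) and Corollary \ref{cfreret}, $g$ carries the non-Euclidean line through $q_1,q_2$ (Definition \ref{noneuclidean}) to the non-Euclidean line through $g(q_1),g(q_2)$, sending the ends $q_3,q_4$ to the ends $g(q_3),g(q_4)$ and preserving their cyclic arrangement by continuity; since the cross-ratio of concyclic points is real, Corollary \ref{crreale} gives $\ratio(g(q_1),g(q_2),g(q_3),g(q_4))=\ratio(q_1,q_2,q_3,q_4)$, whence $\delta_\Delta(g(q_1),g(q_2))=\delta_\Delta(q_1,q_2)$.

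Next I would compute $\delta_\Delta(0,q)$ directly: for $q\neq 0$ the non-Euclidean line through $0$ and $q$ is the diameter in the direction $u=q/|q|$ with ends $\pm u$, and arranging $0,q,u,-u$ cyclically and using $(tu\pm u)=(t\pm 1)u$ one gets $\ratio(0,q,u,-u)=\frac{1+|q|}{1-|q|}$, so that $\delta_\Delta(0,q)=\frac{1}{2}\log\frac{1+|q|}{1-|q|}=\tanh^{-1}|q|$. For general $q_1,q_2$ the map $g(q)=(q-q_1)(1-\overline{q_1}q)^{-1}$ lies in $\mathbb{M}$ (Theorem \ref{teofonda}, with $\alpha=\beta=1$) and sends $q_1\mapsto 0$, so the isometry property yields the closed form
\begin{equation*}
\delta_\Delta(q_1,q_2)=\tanh^{-1}\rho(q_1,q_2),\qquad \rho(q_1,q_2):=\frac{|q_2-q_1|}{|1-\overline{q_1}q_2|}\in[0,1).
\end{equation*}
From this, positivity and definiteness are immediate, since $\tanh^{-1}$ is increasing and vanishes only at $0$ while $\rho=0$ exactly when $q_1=q_2$ (this agrees with Proposition \ref{arrcyc}); symmetry follows at once because $|q_2-q_1|=|q_1-q_2|$ and $|1-\overline{q_1}q_2|=|1-\overline{q_2}q_1|$ (moduli of conjugate quaternions), so $\rho(q_1,q_2)=\rho(q_2,q_1)$.

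The main obstacle is the triangle inequality $\delta_\Delta(q_1,q_3)\le\delta_\Delta(q_1,q_2)+\delta_\Delta(q_2,q_3)$, which is genuinely four-dimensional and cannot be read off a single complex slice (the three points need not be coplanar). Using the isometry property I would move $q_2$ to the origin, reducing the claim to $\delta_\Delta(a,c)\le\tanh^{-1}|a|+\tanh^{-1}|c|$ for $a,c\in\Delta_{\hh}$. The crux is the noncommutative identity $|1-\overline{a}c|^2=|c-a|^2+(1-|a|^2)(1-|c|^2)$, which I would verify by expanding both sides and observing that the offending cross terms cancel through $\Re e(xy)=\Re e(yx)$. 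Combined with $|c-a|\le|a|+|c|$, and after the denominator simplifies to $(1+|a||c|)^2$, this gives $\rho(a,c)\le\frac{|a|+|c|}{1+|a||c|}$; since $\tanh^{-1}$ is increasing and the hyperbolic addition formula gives $\tanh^{-1}|a|+\tanh^{-1}|c|=\tanh^{-1}\frac{|a|+|c|}{1+|a||c|}$, the inequality follows. I expect the verification of this key identity, together with the monotonicity and addition-formula bookkeeping, to be the only real work; the remaining axioms are formal once the closed form is in hand.
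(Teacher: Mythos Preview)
Your proof is correct, but your route to the triangle inequality is genuinely different from the paper's. You move the \emph{middle} point $q_2$ to $0$ and then establish $\delta_\Delta(a,c)\le\tanh^{-1}|a|+\tanh^{-1}|c|$ by a direct quaternionic computation, via the identity $|1-\overline{a}c|^2=|c-a|^2+(1-|a|^2)(1-|c|^2)$ and the hyperbolic addition formula. The paper instead moves the two \emph{outer} points $q_1,q_2$ to $0$ and to some $t\in\mathbb{R}^+$; since the real axis lies in \emph{every} slice $L_I$, the image of the third point, written as $x+yI$, automatically sits together with $0$ and $t$ in a single complex disc $\Delta_I$, and the triangle inequality then follows from the classical Poincar\'e distance there. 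So your assertion that the triangle inequality ``cannot be read off a single complex slice'' is actually mistaken: the paper's trick is precisely to force all three points into one slice by exploiting the common real diameter. Your approach buys self-containment (no appeal to the complex result) and yields the closed form $\delta_\Delta=\tanh^{-1}\rho$ before, rather than after, the metric axioms; the paper's buys brevity. For the remaining axioms the two arguments are close: the paper obtains symmetry from $\ratio(q_1,q_2,q_3,q_4)=\ratio(q_2,q_1,q_4,q_3)$ (using that $k_1k_2\in\mathbb{R}$ forces $k_1k_2=k_2k_1$), whereas you read it off from $|1-\overline{q_1}q_2|=|1-\overline{q_2}q_1|$; both invoke Proposition~\ref{arrcyc} for positivity and definiteness. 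One cosmetic remark: in your triangle step the phrase ``the denominator simplifies to $(1+|a||c|)^2$'' is a bit cryptic; what is really used is that the identity gives $1-\rho(a,c)^2=(1-|a|^2)(1-|c|^2)/|1-\overline{a}c|^2$, and then $|1-\overline{a}c|\le 1+|a||c|$ does the job without invoking $|c-a|\le|a|+|c|$ separately.
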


Before
proving that $\delta_{\Delta}$ is actually a distance, we need to state
properties of invariance for it. The following geometrical feature of
the elements of
$\mathbb{M}$ is important in the sequel.

\begin{lemma}\label{lines onto lines}
The M\"obius transformations map non-Euclidean lines of
$\Delta_{\mathbb{H}}$ onto
non-Euclidean lines of $\Delta_{\mathbb{H}}$.
\end{lemma}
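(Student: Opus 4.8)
The plan is to reduce the statement to the four-point characterization of non-Euclidean lines already established in Theorem \ref{quattropunti}, combined with the fact (Corollary \ref{cfreret}) that every element of $\mathbb{G}$, and hence every M\"obius transformation $g\in\mathbb{M}$, carries circles and straight lines onto circles and straight lines. Fix $q_1\neq q_2$ in $\Delta_{\hh}$ and let $l$ be the non-Euclidean line through them. By Theorem \ref{quattropunti}, $l$ is exactly the circle (or straight line) determined by the four points $q_1,q_2,\overline{q_1}^{-1},\overline{q_2}^{-1}$; this already subsumes the diameter case of Definition \ref{noneuclidean}, so no separate argument for $\rr$-linearly dependent points is needed. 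Applying $g$ and invoking Corollary \ref{cfreret}, $g(l)$ is the circle (or line) through $g(q_1),g(q_2),g(\overline{q_1}^{-1}),g(\overline{q_2}^{-1})$. Since $g$ maps $\Delta_{\hh}$ onto itself we have $g(q_1),g(q_2)\in\Delta_{\hh}$, so if I can prove that $g(\overline{q_i}^{-1})=\overline{g(q_i)}^{-1}$ for $i=1,2$, then $g(l)$ is the circle through $g(q_1),g(q_2),\overline{g(q_1)}^{-1},\overline{g(q_2)}^{-1}$, which by Theorem \ref{quattropunti} applied to $g(q_1),g(q_2)$ is precisely the non-Euclidean line through $g(q_1)$ and $g(q_2)$.

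Thus the crux is the \emph{symmetry principle} $g(\overline{q}^{-1})=\overline{g(q)}^{-1}$, asserted for all $g\in\mathbb{M}$ and all $q\in\hh\setminus\{0\}$; geometrically it says that $g$ commutes with the inversion $q\mapsto\overline{q}^{-1}=q/|q|^{2}$ in the boundary sphere $\s^{3}_{\hh}$. To prove it I would use the normal form of Theorem \ref{teofonda}, writing $g(q)=\alpha(q-q_0)(1-\overline{q_0}q)^{-1}\beta^{-1}$ with $|\alpha|=|\beta|=1$ and $|q_0|<1$. A direct conjugation, using $\overline{\alpha}=\alpha^{-1}$ and $\overline{\beta}=\beta^{-1}$, gives
\[
\overline{g(q)}=\beta(1-\overline{q}q_0)^{-1}(\overline{q}-\overline{q_0})\alpha^{-1},\qquad\text{hence}\qquad \overline{g(q)}^{-1}=\alpha(\overline{q}-\overline{q_0})^{-1}(1-\overline{q}q_0)\beta^{-1}.
\]
On the other hand, substituting $\overline{q}^{-1}$ and factoring $\overline{q}^{-1}$ on the left of $\overline{q}^{-1}-q_0$ and on the right of $1-\overline{q_0}\,\overline{q}^{-1}$ yields, after the cancellation $\overline{q}^{-1}(1-\overline{q}q_0)\overline{q}=1-q_0\overline{q}$,
\[
g(\overline{q}^{-1})=\alpha(1-q_0\overline{q})(\overline{q}-\overline{q_0})^{-1}\beta^{-1}.
\]

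I expect the main obstacle to be exactly this final comparison, where the noncommutativity of $\hh$ must be handled with care: one has to verify $(\overline{q}-\overline{q_0})^{-1}(1-\overline{q}q_0)=(1-q_0\overline{q})(\overline{q}-\overline{q_0})^{-1}$. Clearing denominators reduces this to $(1-\overline{q}q_0)(\overline{q}-\overline{q_0})=(\overline{q}-\overline{q_0})(1-q_0\overline{q})$, and expanding both sides the difference collapses to $\overline{q}\,q_0\overline{q_0}-\overline{q_0}q_0\overline{q}=|q_0|^{2}\overline{q}-|q_0|^{2}\overline{q}=0$, precisely because $|q_0|^{2}\in\rr$ is central. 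This confirms the symmetry principle and, via the reduction above, completes the proof.

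An alternative, more geometric route would avoid this computation: since $g$ is conformal (Lemma \ref{struttura di G}) and, being a M\"obius transformation whose pole lies outside $\overline{\Delta_{\hh}}$, extends to a homeomorphism of $\overline{\Delta_{\hh}}$ carrying $\s^{3}_{\hh}=\partial\Delta_{\hh}$ onto itself, it preserves the orthogonality between a non-Euclidean line and the boundary sphere; together with Corollary \ref{cfreret} this shows that the image of a circle (or diameter) orthogonal to $\s^{3}_{\hh}$ is again a circle, or a line through the center, orthogonal to $\s^{3}_{\hh}$, i.e. a non-Euclidean line. I would still favour the four-point argument, as it rests directly on the tools already developed and treats all cases uniformly.
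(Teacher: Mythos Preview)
Your proof is correct, but your main line of argument differs from the paper's: the paper adopts precisely the ``alternative, more geometric route'' you sketch at the end. Its entire proof reads: by Lemma~\ref{struttura di G} the M\"obius transformations are conformal; since each of them maps $\mathbb{S}^{3}_{\hh}$ onto itself, Corollary~\ref{cfreret} finishes the argument. In other words, a circle (or diameter) meeting $\partial\Delta_{\hh}$ orthogonally is sent to a circle or line still meeting $\partial\Delta_{\hh}$ orthogonally, hence to a non-Euclidean line. Your primary approach via Theorem~\ref{quattropunti} and the symmetry principle $g(\overline{q}^{-1})=\overline{g(q)}^{-1}$ is more algebraic and has independent value: it makes explicit that every $g\in\mathbb{M}$ commutes with the reflection in $\mathbb{S}^{3}_{\hh}$, a fact not stated elsewhere in the paper. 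The computation you carry out (reducing to $\overline{q}\,q_0\overline{q_0}=\overline{q_0}q_0\overline{q}$ via centrality of $|q_0|^2$) is sound; just note that the edge cases $q_i=0$ or $g(q_i)=0$ require interpreting $\overline{0}^{-1}=\infty$ in $\hh\cup\{\infty\}$, which is consistent with how Theorem~\ref{quattropunti} handles the straight-line case. The paper's route is shorter and leans only on conformality and the circles-to-circles property already in hand; yours is self-contained at the level of the normal form and yields the reflection symmetry as a bonus.
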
 
\begin{proof} Thanks to Lemma \ref{struttura di G}, the M\"obius
transformations are conformal. 
Since any M\"obius transformation maps $\mathbb{S}^3_\mathbb{H}$ onto
itself, the proof is concluded in view of  corollary
\ref{cfreret}.
\end{proof}

We will now make some remarks on the geometrical
properties of the M\"obius transformations. For a non real $q_{0}$,
let us consider 
\begin{equation*}
  g(q)= (q - q_0)(1-\overline{q_0}q)^{-1}.
\end{equation*} 

\begin{nota}\label{geometria moebius}
The map $g$
transforms any $2-$dimensional real plane containing the line
$l_{q_{0}}=\{ tq_0:  t \in \mathbb{R}\}$ onto  a  $2-$dimensional
real plane containing the same line
$l_{q_{0}}$. Any other 
$2-$dimensional real vector subspace not containing the point $q_{0}$ is
mapped 
onto a $2-$dimensional sphere orthogonal to
$\mathbb{S}^{3}_\mathbb{H}$. 
\end{nota}
\noindent This last remark can be explained  as follows:  since $g(q_0)=0$ and
$g(0)=-q_0$, by corollary \ref{cfreret} and in view of the
conformality of $g$ (see lemma \ref{struttura di G}), we get that
$g(l_{q_0})=l_{q_0}$. Therefore corollary \ref{cfreret} leads to the
proof of the first part of the remark. If a $2-$dimensional real
plane $\pi$ does not contain the line $l_{q_{0}}$, then $0 \notin
g(\pi)$ and, again by conformality and by corollary \ref{cfreret}, we
obtain that $g(\pi)$ is a $2-$dimensional sphere orthogonal to
$\mathbb{S}^{3}_\mathbb{H}$.

Let us now consider the group $\mathbb{M}^{*}$ of {\it extended M\"obius
transformations} defined as the union of all the M\"obius
transformations $g \in \mathbb{M}$ and all maps $h$ obtained as
$h(q)=g(\overline{q})$ for $g\in \mathbb{M}$.

\begin{prop}\label{invarianza} The Poincar\'e distance of
    $\Delta_{\mathbb{H}}$ is invariant under the action of the group
    of all extended M\"obius transformations $\mathbb{M}^{*}$. 
\end{prop}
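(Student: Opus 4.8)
The plan is to reduce the statement to the invariance of the \emph{real} cross-ratio, treating the two generators of $\mathbb{M}^{*}$ separately: an honest M\"obius transformation $g\in\mathbb{M}$ and the single conjugation $c(q)=\overline{q}$, so that a general extended map is either $g$ or a composition $g\circ c$. I would fix $q_1\neq q_2$ in $\Delta_{\mathbb{H}}$, let $l$ be the non-Euclidean line through them and $q_3,q_4$ its ends on $\mathbb{S}^{3}_{\mathbb{H}}$, arranged cyclically as in definition \ref{definizionedistanza}. Since the four points are concyclic, theorem \ref{birreal} gives $\lambda:=\ratio(q_1,q_2,q_3,q_4)\in\mathbb{R}$, and in fact $\lambda>1$ by proposition \ref{arrcyc}.

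First I would dispose of $g\in\mathbb{M}$. By lemma \ref{lines onto lines}, $g$ carries $l$ onto the non-Euclidean line through $g(q_1),g(q_2)$; because $g$ maps $\mathbb{S}^{3}_{\mathbb{H}}$ onto itself and is a homeomorphism, it sends the ends $q_3,q_4$ to the ends $g(q_3),g(q_4)$ of $g(l)$ and preserves their cyclic arrangement. As $g\in\mathbb{G}$ and $\lambda$ is real, corollary \ref{crreale} yields $\ratio(g(q_1),g(q_2),g(q_3),g(q_4))=\lambda$, whence $\delta_{\Delta}(g(q_1),g(q_2))=\frac{1}{2}\log\lambda=\delta_{\Delta}(q_1,q_2)$.

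The real work is the conjugation $c$, since $c$ is \emph{not} a fractional linear transformation and corollary \ref{crreale} does not apply to it. Conjugation is an (orientation-reversing) isometry of $\mathbb{H}\cong\mathbb{R}^{4}$ fixing $\mathbb{S}^{3}_{\mathbb{H}}$ and $\Delta_{\mathbb{H}}$; hence it carries circles orthogonal to $\mathbb{S}^{3}_{\mathbb{H}}$ to circles orthogonal to $\mathbb{S}^{3}_{\mathbb{H}}$, so it maps $l$ onto the non-Euclidean line through $\overline{q_1},\overline{q_2}$ with ends $\overline{q_3},\overline{q_4}$. In particular $\mu:=\ratio(\overline{q_1},\overline{q_2},\overline{q_3},\overline{q_4})$ is again real by theorem \ref{birreal}. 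To identify $\mu$ with $\lambda$ I would argue by modulus and sign. Writing $A=q_1-q_3$, $B=q_1-q_4$, $C=q_2-q_4$, $D=q_2-q_3$, so that $\lambda=AB^{-1}CD^{-1}$ and $\mu=\overline{A}\,\overline{B}^{-1}\overline{C}\,\overline{D}^{-1}$, multiplicativity of the quaternionic modulus together with $|\overline{x}|=|x|$ gives $|\mu|=|A|\,|B|^{-1}|C|\,|D|^{-1}=|\lambda|$; since both numbers are real, this forces $\mu=\pm\lambda$. The sign is pinned down by theorem \ref{birreal}: the sign of a real cross-ratio records whether the pairs $\{q_1,q_2\}$ and $\{q_3,q_4\}$ separate each other on their common circle, and this separation relation is preserved by the homeomorphism $c$. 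As the original points are cyclically arranged (non-separating, $\lambda>1$), so are their conjugates, giving $\mu=\lambda>1$ and $\delta_{\Delta}(\overline{q_1},\overline{q_2})=\delta_{\Delta}(q_1,q_2)$.

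Finally, an arbitrary element of $\mathbb{M}^{*}$ is either some $g\in\mathbb{M}$ or a composition $g\circ c$; combining the two invariances above completes the proof. The one genuinely delicate point, and the step I expect to require care, is the sign determination in the conjugation case: modulus multiplicativity only gives $\mu=\pm\lambda$, and ruling out the spurious sign flip is exactly where the geometric separation property of theorem \ref{birreal}, rather than an algebraic identity, must be invoked.
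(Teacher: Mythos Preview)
Your proof is correct and follows essentially the same route as the paper: both arguments reduce to showing that non-Euclidean lines (with their ends and cyclic order) are preserved by each generator of $\mathbb{M}^{*}$, and then invoke invariance of the real cross-ratio under $\mathbb{M}$ via corollary~\ref{crreale} together with invariance under $q\mapsto\overline{q}$. The only difference is one of detail: the paper simply asserts that a real cross-ratio is invariant under conjugation, whereas you supply the modulus-plus-sign argument (using theorem~\ref{birreal} for the sign) to justify this step explicitly.
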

\begin{proof} Let us start the proof by recalling that the map
$q\mapsto 
    \overline{q}$ is conformal and transforms $\Delta_{\mathbb{H}}$
onto
    itself. Therefore, as all M\"obius transformations, it maps
non-Euclidean lines onto non-Euclidean
    lines transforming ends in ends. To conclude the proof, let us
observe that the cross-ratio, when real, is invariant with respect to
    the action of all elements of $\mathbb{M}$ (see corollary
    \ref{crreale}) and with respect to $q\mapsto \overline{q}.$
\end{proof}

\begin{proof2} By proposition \ref{arrcyc}, the cross ratio
$\ratio(q_1,q_2,q_3,q_4)$ 
of the four points is a
real positive number greater than $1$ and therefore its real
logarithm is well
defined and 
positive. We want to prove now that $\delta_{\Delta}$ is symmetric.
Interchanging $q_1$ and $q_2$ 
requires interchanging $q_3$ and $q_4$
to maintain the cyclical order. After simple computations, we find:
\begin{equation*}
\ratio(q_1,q_2,q_3,q_4)=k_1  k_2 \in \mathbb{R}
\end{equation*}
where $k_1=(q_1-q_3)(q_1-q_4)^{-1} \in \mathbb{H}$ and $k_2=
(q_2-q_4)(q_2-q_3)^{-1} \in \mathbb{H}.$
Similarly:
\begin{equation*}
\ratio(q_2,q_1,q_4,q_3)=k_2  k_1 \in \mathbb{R}.
\end{equation*}
Since $k_1 k_2 \in \mathbb{R}$ it follows that $k_1  k_2
=k_2  k_1.$ Hence
\begin{equation*}
\ratio(q_1,q_2,q_3,q_4)=\ratio(q_2,q_1,q_4,q_3)
\end{equation*}
and hence 
\begin{equation*}
\delta_{\Delta}(q_1,q_2)=\delta_{\Delta}(q_2,q_1)     
\end{equation*}
i.e. $\delta_{\Delta}$ is symmetric. We also have that $q_1=q_2$ if, and only 
if,  $\delta_{\Delta}(q_1,q_2)=0$ because
$\ratio(q_1,q_1,q_3,q_4)=1$ if, and only if, $q_1=q_2$ (see
proposition
\ref{arrcyc}). The last thing to prove is the triangle inequality. To 
this purpose, for any $q_{0}, q_{1}, q_{2}\in \Delta_{\mathbb{H}}$ we
have to show that
\begin{equation}\label{triangolo}
\delta_{\Delta}(q_{1}, q_{2})\le \delta_{\Delta}(q_{1}, q_{0}) + \delta_{\Delta}(q_{0}, q_{2}).
\end{equation}
In view of Theorem \ref{teofonda}, consider the following M\"{o}bius
transformation of
$\Delta_{\mathbb{H}}$ onto itself, 
\begin{equation} \label{realpoints}
L(q)=\lambda_1 (q-a)(1-\overline{a}q)^{-1} \lambda_2  
\end{equation}
where $a=q_1,$ $\lambda_1=|q_2-q_1|(q_2 -q_1)^{-1},$
$\lambda_2=(1-\overline{q_1}q_2)|1-\overline{q_1}q_2|^{-1}$ and
$t=|q_2-q_1||1-\overline{q_1}q_2| \in \mathbb{R}.$
The transformation $L$ maps $q_1$ to $0$ and $q_2$ to $t \in
\mathbb{R}^+.$  By proposition \ref{invarianza},  the map $\delta_{\Delta}$ is
invariant under 
the action of $L$, and hence (\ref{triangolo}) is equivalent to
\begin{equation}\label{triangolo1}
\delta_{\Delta}(0, t)\le \delta_{\Delta}(0, L(q_{0})) + \delta_{\Delta}(L(q_{0}), t).
\end{equation}
Let $L(q_0)=x+yI,$ for some $x,y \in \mathbb{R},$ $I \in \mathbb{S}.$
Clearly $L(q_0),0,t \in L_I \cap 
\Delta_{\mathbb{H}}=\Delta_I.$ Since, by construction,
$\delta_{\Delta}$ restricted to $\Delta_I$ coincides with the Poincar\'e distance of
$\Delta_I,$
we prove (\ref{triangolo1}) and conclude.
 \end{proof2}
 
At this point we are ready to exhibit a formula for the Poincar\'e
distance of $\Delta_{\hh}$. In fact by definition of $\delta_{\Delta}$ (see
(\ref{definizionedistanza}) )
we have, for any $t\in \rr^{+}$:
\begin{equation}\label{zerot}
    \delta_{\Delta}(0, t)=\frac{1}{2}\log (\ratio(0,t,1,-1))= \frac{1}{2}\log
    \frac{1+t}{1-t}.
\end{equation}
Since the Poincar\'e distance is invariant by rotations we get
\begin{equation}
\delta_{\Delta}(0,q)=\delta_{\Delta}(0,|q|)=
\frac{1}{2}\log\frac{1+|q|}{1-|q|}.
\end{equation}
In general, for arbitrary $q_1,q_2 \in \Delta_{\mathbb{H}},$ if we
consider the isometry
$q \mapsto (q-q_1)(1-\overline{q_1}q)^{-1},$ we obtain that:
\begin{equation} \label{formulalog}
\delta_{\Delta} (q_1,q_2)= \delta_{\Delta}
(0,|q_2-q_1||1-\overline{q_1}q_2|^{-1})=\frac{1}{2}\log\left( 
\frac{1+
|q_1-q_2||1-\overline{q_1}q_2|^{-1}}{1-|q_1-q_2||1-\overline{q_1}q_2|^{-1}}
\right).
\end{equation}

We will end this section by defining and studying the main
properties of  the analogous of the Poincar\'e differential 
metric - which we will often simply call {\it Poincar\'e metric} - in 
the case of the unit disc $\Delta_{\hh}$ of $\hh$.
To this aim we
mimic the definition of the classical complex
Poincar\'e differential metric of $\mathbb{D} \subset \mathbb{C}$ 
to set the length of the vector $\tau
\in \hh$  for the Poincar\'e metric at $q\in \Delta_{\hh}$ to be the
number:
\begin{equation}\label{poinmetric}
\langle \tau \rangle_{q} = \frac{|\tau|}{1-|q|^{2}}.
\end{equation}
Formula (\ref{poinmetric}) leads now to 
the definition of the (square of the) Poincar\'e length element at $q\in
\Delta_{\hh}$:
\begin{equation*}
ds^2= \frac{|d_I q|^2}{(1-|q|^2)^2}
\end{equation*}
where $q=x+yI$ and $d_I q= dx + I dy$ (for $I\in \mathbb{S}$).

The following result has its own
independent interest, as we will see later.

\begin{teo}\label{invarianza della metricadiff} All the elements of
    the group $\mathbb{M^{*}}$ of extended M\"obius transformations   of
    $\Delta_{\hh}$ leave the Poincar\'e differential metric
    (\ref{poinmetric}) invariant.
\end{teo}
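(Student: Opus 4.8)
The plan is to verify directly the defining equality of invariance, namely
$$\langle dg_{q}(\tau)\rangle_{g(q)}=\langle \tau\rangle_{q},$$
which by (\ref{poinmetric}) means $|dg_{q}(\tau)|(1-|g(q)|^{2})^{-1}=|\tau|(1-|q|^{2})^{-1}$ for every $q\in\Delta_{\mathbb{H}}$ and every tangent vector $\tau$. Since the property of preserving the metric is closed under composition and inversion, it suffices to check it on a generating set of $\mathbb{M}^{*}$. By Theorem \ref{teofonda} every $g\in\mathbb{M}$ factors as the standard map $g_{0}(q)=(q-q_{0})(1-\overline{q_{0}}q)^{-1}$ followed by a rotation $w\mapsto\alpha w\beta^{-1}$ with $|\alpha|=|\beta|=1$, and $\mathbb{M}^{*}$ is obtained by adjoining the conjugation $q\mapsto\overline{q}$; so only these three types need to be treated. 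The rotation and conjugation cases are immediate, since both preserve the modulus, $|\alpha q\beta^{-1}|=|q|=|\overline{q}|$, and their differentials $\tau\mapsto\alpha\tau\beta^{-1}$ and $\tau\mapsto\overline{\tau}$ preserve $|\tau|$; hence numerator and denominator are individually unchanged. The whole content lies in $g_{0}$.

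For $g_{0}$ I would first compute the $\mathbb{R}$-differential. Writing $v=1-\overline{q_{0}}q$ and differentiating $g_{0}=(q-q_{0})v^{-1}$ (using $d(v^{-1})=-v^{-1}(dv)v^{-1}$ and $dv=-\overline{q_{0}}\tau$), one collects the two resulting terms into
$$dg_{0,q}(\tau)=\bigl(1+g_{0}(q)\overline{q_{0}}\bigr)\,\tau\,(1-\overline{q_{0}}q)^{-1}.$$
Being of the form $\tau\mapsto A\tau B$, this is a similarity (consistent with the conformality established in Lemma \ref{struttura di G}) with direction-independent dilation factor $\lambda(q)=|1+g_{0}(q)\overline{q_{0}}|\,|1-\overline{q_{0}}q|^{-1}$. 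Combining this with the identity $1-|g_{0}(q)|^{2}=(1-|q_{0}|^{2})(1-|q|^{2})|1-\overline{q_{0}}q|^{-2}$ already proved inside Theorem \ref{teofonda}, the target equality $\lambda(q)=(1-|g_{0}(q)|^{2})(1-|q|^{2})^{-1}$ collapses to the single scalar identity
$$|1+g_{0}(q)\overline{q_{0}}|\,|1-\overline{q_{0}}q|=1-|q_{0}|^{2}.$$

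To prove this last identity I would expand $|1+g_{0}(q)\overline{q_{0}}|^{2}=1+2\Re e(g_{0}(q)\overline{q_{0}})+|g_{0}(q)|^{2}|q_{0}|^{2}$ and evaluate the cross term using the cyclic invariance $\Re e(uv)=\Re e(vu)$ together with the substitution $\overline{q_{0}}q=1-v$. Concretely, $\Re e(g_{0}(q)\overline{q_{0}})=\Re e(\overline{q_{0}}(q-q_{0})v^{-1})$, and $\overline{q_{0}}(q-q_{0})=(1-|q_{0}|^{2})-v$, so this real part equals $(1-|q_{0}|^{2})\Re e(v^{-1})-1$ with $\Re e(v^{-1})=\Re e(v)|v|^{-2}$. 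Substituting the elementary expansions of $|v|^{2}=|1-\overline{q_{0}}q|^{2}$ and $|q-q_{0}|^{2}$, and multiplying through by $|v|^{2}$, the cross terms cancel and everything reduces to $(1-|q_{0}|^{2})^{2}$, which is exactly the squared right-hand side.

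The main obstacle is precisely this last verification: because $\mathbb{H}$ is non-commutative, $\overline{q_{0}}$ cannot be moved past $(1-\overline{q_{0}}q)^{-1}$, so one cannot clear denominators naively, and a brute-force expansion of $dg_{0,q}$ in coordinates would be unwieldy. The two devices that make the argument clean are the two-sided form $\tau\mapsto A\tau B$ of the differential, which simultaneously exhibits conformality and isolates the dilation factor as $|A|\,|B|$, and the evaluation of $\Re e(g_{0}(q)\overline{q_{0}})$ through the cyclicity of the real part, which lets one avoid ever commuting or inverting quaternions inside $\Re e$. Once the scalar identity is secured, invariance for all of $\mathbb{M}^{*}$ follows from the generator reduction.
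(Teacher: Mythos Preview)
Your proof is correct. The overall scaffolding matches the paper's: reduce via Theorem~\ref{teofonda} to rotations, conjugation, and the basic map $g_{0}(q)=(q-q_{0})(1-\overline{q_{0}}q)^{-1}$, dispose of the first two trivially, and show that the dilation of $g_{0}$ equals $(1-|g_{0}(q)|^{2})/(1-|q|^{2})$. The difference is in how that dilation is obtained. The paper never differentiates $g_{0}$ directly; instead it invokes the decomposition of Lemma~\ref{struttura di G} to write $g_{0}(q)=-\overline{q_{0}}^{\,-1}+(-q_{0}+\overline{q_{0}}^{\,-1})(1-\overline{q_{0}}q)^{-1}$ and multiplies the known dilation factors of a translation, a left multiplication, and an inversion to get $(1-|q_{0}|^{2})/|1-\overline{q_{0}}q|^{2}$ immediately. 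You compute the differential in closed form as $dg_{0,q}(\tau)=(1+g_{0}(q)\overline{q_{0}})\,\tau\,(1-\overline{q_{0}}q)^{-1}$ and then establish the scalar identity $|1+g_{0}(q)\overline{q_{0}}|\,|1-\overline{q_{0}}q|=1-|q_{0}|^{2}$ via the cyclicity of $\Re e$. Your route yields more information (an explicit two-sided form for $dg_{0}$, manifestly a similarity) at the cost of a short but genuine computation; the paper's route is quicker once Lemma~\ref{struttura di G} is in hand, though it tacitly assumes $q_{0}\neq 0$, a harmless case your argument handles uniformly.
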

\begin{proof} To begin with we recall that, by theorem \ref{teofonda},
    all the elements of $\mathbb{M}$ can be written uniquely as:
\begin{equation} \label{formafond1}
h(q)=\alpha (q - q_0)(1-\overline{q_0}q)^{-1}  \beta^{-1}
\end{equation}
where $q_0 \in \Delta_{\mathbb{H}}$ and
where
$\alpha, \beta  \in \mathbb{S}^3_{\mathbb{H}}.$ Since the (right and
left)
multiplication by elements of $\mathbb{S}^3_{\mathbb{H}}$ obviously
leaves the Poincar\'e differential metric invariant, we are left to
prove the invariance of the differential metric under the action of
the M\"obius transformations of type 
\begin{equation}\label{moebiustrans}
g(q)=(q - q_0)(1-\overline{q_0}q)^{-1}.
\end{equation}
By lemma \ref{struttura di
    G},
  the map $g$ can be decomposed as follows
   \begin{equation}\label{moebiustransdec}
g(q)=-\overline{q_{0}}^{-1}+(-q_{0}+\overline{q_{0}}^{-1})(1-\overline{q_{0}}q)^{-1}
.\end{equation}
 Since (again by lemma \ref{struttura di
    G}) all the elements of $\mathbb{M}$ are conformal, we will compute the 
    dilation coefficients of the
    differentials of the single components of $g$.  
    The dilation coefficient of the map $q \mapsto (-q_{0}+\overline{q_{0}}^{-1})q$ 
    is
    $|(-q_{0}+\overline{q_{0}}^{-1})|=\frac{(1-|q_{0}^{2}|)}{|q_{0}|}$.
    What is written in the proof of lemma \ref{struttura di
    G} yields that the dilation coefficient of
    $(1-\overline{q_{0}}q)^{-1}$ is
    $\frac{|q_{0}|}{|1-\overline{q_{0}}q|^{2}}$. Therefore the total
    dilation coefficient of $g$ is $\frac{(1-|q_{0}|^{2})}{|1-\overline{q_{0}}q|^{2}}$.
    Since $1-|g(q)|^{2}=
    \frac{(1-|q_{0}|^{2})}{|1-\overline{q_{0}}q|^{2}}$, we have proved
    the assertion for all the M\"obius transformations. To conclude
    the proof it is enough to notice that the dilation coefficient of 
    (the differential of) the
    map $q \mapsto \overline{q}$ is equal to $1$.
    \end{proof}

The map $L$ defined in \ref{realpoints} sends two (arbitrary)
points $q_1$ and $q_2$
of $\Delta_{\mathbb{H}},$ to $0$ and $t \in \mathbb{R}^+$
(respectively), which
belong to each $\Delta_I.$
Then, as in the case of the complex disc, we find that the Poincar\'e 
distance $\delta_{\Delta}$ is such that
\begin{equation} \label{infimum}
\delta_{\Delta} (q_1,q_2)=\delta_{\Delta} (0,t)=\inf\int\limits_{l} ds =
\inf\int\limits_{l} \frac{|d_Iq|}{1-|q|^2}
\end{equation}
where the infimum has been taken on all the arcs $l$ which are
piece-wise 
differentiable  and which join $0$ and $t$. Therefore we have:
\begin{prop}
    The Poincar\'e distance $\delta_{\Delta}$ of the unit disc $\Delta_{\hh}$
    is the integrated distance of the Poincar\'e differential metric
    of $\Delta_{\hh}$.
\end{prop}

\section{Poincar\'e and Kobayashi distances on the quaternionic unit
disc}
 
Let us consider the isomorphism $\mathbb{H}\cong
\mathbb{C}+\mathbb{C}j$ which leads to the identification
$\Delta_{\mathbb{H}}\cong
\Delta_{\mathbb{C}^{2}}=:\Delta$ between the open, unit disc of
$\mathbb{H}$ and the open unit ball of $\mathbb{C}^{2}$. Now that we
have given a direct, geometrical definition of
the Poincar\'e distance $\delta_{\Delta}$ of
$\Delta_{\mathbb{H}}$, the natural question arises to find a direct
proof of the fact that there exists no isometry between $\delta_{\Delta}$  and 
the Kobayashi distance of $\Delta_{\mathbb{C}^{2}}$
(see, e.g., \cite{Rud}).
To find such a proof we begin with the following

\begin{nota}\label{coincidenza} If $\delta_{\mathbb{D}}$ denotes the Poincar\'e distance 
of the open, unit disc $\mathbb{D}$  of $\mathbb{C}$, then both the Poincar\'e
distance $\delta_{\Delta}$ and the Kobayashi distance $k_\Delta$ have the
property that 
$$
\delta_{\Delta}(0, q) = k_\Delta(0, q)=\delta_{\mathbb{D}}(0, |q|)
$$
for all $q\in \Delta$.  Moreover, the
Poincar\'e differential metric and the Kobayashi differential metric
coincide with the Euclidean differential metric at the origin of the open, unit disc of $\mathbb{H}$. 
\end{nota}
\noindent With this in mind, we will prove the following technical result:

\begin{lemma}\label{tecnico}If there exists an isometry $f:\Delta
    \to \Delta$ between the
    Kobayashi distance $k_\Delta$ and the Poincar\'e distance
    $\delta_{\Delta}$, then the identity function of $\Delta$ is an
    isometry between $k_\Delta$ and 
    $\delta_{\Delta}$, and hence $k_\Delta \equiv \delta_{\Delta}$.
    
\end{lemma}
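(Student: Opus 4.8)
The plan is to normalize the hypothetical isometry so that it fixes the origin, to show that it is then forced to be the restriction of a Euclidean rotation of $\mathbb{H}\cong\mathbb{R}^4$, and finally to observe that such a rotation already preserves $\delta_{\Delta}$, which collapses the two distances onto one another.

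First I would reduce to the case $f(0)=0$. Since the M\"obius group $\mathbb{M}$ acts transitively on $\Delta_{\mathbb{H}}$ and, by Proposition \ref{invarianza}, every element of $\mathbb{M}$ is a $\delta_{\Delta}$-isometry, I may choose $G\in\mathbb{M}$ with $G(f(0))=0$ and replace $f$ by $G\circ f$; this is still an isometry from $(\Delta,k_\Delta)$ to $(\Delta,\delta_{\Delta})$ and now fixes $0$. For such an $f$, applying the isometry identity with one point equal to $0$ gives $\delta_{\Delta}(0,f(q))=k_\Delta(0,q)$, and Remark \ref{coincidenza} turns both sides into $\delta_{\mathbb{D}}(0,\cdot)$ evaluated at the respective Euclidean norms; since $\delta_{\mathbb{D}}(0,\cdot)$ is strictly increasing this forces $|f(q)|=|q|$ for all $q\in\Delta$.

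The heart of the argument, and the step I expect to be the main obstacle, is to upgrade this to $f\in O(4)$. The geodesics through $0$ for both $k_\Delta$ and $\delta_{\Delta}$ are exactly the Euclidean diameters (for $\delta_{\Delta}$ this is built into Definition \ref{noneuclidean} together with the rotational symmetry of the metric, and for $k_\Delta$ it is the standard description of the ball metric, see \cite{Rud}), and along them the distance from $0$ equals $\tanh^{-1}$ of the Euclidean norm in both cases. Since a metric isometry carries geodesics to geodesics and $f$ fixes $0$ and preserves norms, $f$ must send each diameter in direction $u$ onto the diameter in some direction $\sigma(u)$ with $f(tu)=t\,\sigma(u)$; hence $f$ is positively homogeneous of degree one. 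Now I use that, by Remark \ref{coincidenza}, both differential metrics agree with the Euclidean one at the origin, so that $\tfrac1t k_\Delta(ta,tb)\to|a-b|$ and $\tfrac1t\delta_{\Delta}(tx,ty)\to|x-y|$ as $t\to0^{+}$. Feeding $ta,tb$ into the isometry identity and using homogeneity ($f(ta)=t f(a)$) yields $|f(a)-f(b)|=|a-b|$ for all $a,b\in\Delta$. By polarization $f$ then preserves the Euclidean inner product, and a norm-preserving, distance-preserving self-map of the ball fixing $0$ is the restriction of a linear orthogonal map; thus $f=A|_{\Delta}$ with $A\in O(4)$.

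Finally I would conclude. Every $A\in O(4)$ is a $\delta_{\Delta}$-isometry: the rotations $q\mapsto\alpha q\beta$ with $|\alpha|=|\beta|=1$ exhaust $SO(4)$ and are M\"obius transformations fixing $0$ (the case $q_0=0$ of Theorem \ref{teofonda}), while the conjugation $q\mapsto\overline q\in\mathbb{M}^{*}$ supplies the remaining component, and all of these preserve $\delta_{\Delta}$ by Proposition \ref{invarianza}. Therefore $\delta_{\Delta}(f(a),f(b))=\delta_{\Delta}(a,b)$ for all $a,b$. Comparing this with the isometry identity $\delta_{\Delta}(f(a),f(b))=k_\Delta(a,b)$ gives $\delta_{\Delta}(a,b)=k_\Delta(a,b)$ for every pair $a,b\in\Delta$; that is, the identity map is an isometry between $k_\Delta$ and $\delta_{\Delta}$, and hence $k_\Delta\equiv\delta_{\Delta}$, as claimed. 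Note that this argument never asserts the two distances are equal a priori, so it does not secretly prove the non-existence statement of the following theorem; the actual failure of $k_\Delta\equiv\delta_{\Delta}$ will be established separately.
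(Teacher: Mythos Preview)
Your argument is correct and follows essentially the same strategy as the paper: normalize so that the isometry fixes $0$, extract an orthogonal transformation, and use that every element of $O(4)$ is a $\delta_{\Delta}$-isometry (via $\mathbb{M}^{*}$) to collapse the two distances. The one genuine difference is how the orthogonal map is extracted. The paper simply takes the real differential $d(M\circ f)_{0}$, observes it is orthogonal because both differential metrics are Euclidean at $0$ (Remark \ref{coincidenza}), and then composes with its inverse to reduce to an isometry with identity differential at $0$, which must be the identity since geodesics through $0$ are diameters for both metrics. You instead avoid invoking the differential a priori: you first establish $|f(q)|=|q|$ and positive homogeneity along diameters, then pass to the infinitesimal limit $\tfrac{1}{t}k_\Delta(ta,tb)\to|a-b|$ (and likewise for $\delta_{\Delta}$) to deduce $|f(a)-f(b)|=|a-b|$, and finally use polarization to conclude $f\in O(4)$. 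Your route is slightly more elementary in that it does not tacitly assume smoothness of the isometry (which the paper's use of $d(M\circ f)_{0}$ does, implicitly relying on a Myers--Steenrod type fact); the paper's route is shorter once one accepts that step. Either way the key inputs are the same: transitivity of $\mathbb{M}$, Remark \ref{coincidenza}, the fact that diameters are the common geodesics through $0$, and Proposition \ref{invarianza}.
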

\begin{proof} If $f$ is the identity function of $\Delta$ there is nothing to prove. 
Otherwise, let $M\in \mathbb{M}$ be a quaternionic, M\"obius transformation of
    $\Delta$ such that $M(f(0))=0$. By proposition \ref{invarianza}, 
    the function $M\circ f$ is an isometry between $k_\Delta$ and 
    $\delta_{\Delta}$ which fixes $0$. If we identify $\mathbb{H}$ with $\mathbb{R}^4$, 
    then remark \ref{coincidenza} yields that the real differential  $d(M\circ
    f)_{0}$ is an orthogonal matrix. Now the geometrical definition of
    the Poincar\'e distance $\delta_{\Delta}$ given in (\ref{definizionedistanza}) 
    makes it clear that any orthogonal transformation of $\Delta$ is a 
    $\delta_{\Delta}$-isometry together with
    its inverse. Therefore the
    function $F=d(M\circ f)^{-1}_{0}\circ M \circ f : \Delta \to \Delta$
    is an isometry between $k_\Delta$ and $\delta_{\Delta}$, whose differential
    $dF_{0}$ is the identity function. Since the geodesic curves of both
    $k_\Delta$ and $\delta_{\Delta}$ passing through $0$ are the diameters of
    $\Delta$, then the isometry $F$ itself is the identity map.
\end{proof}

\noindent Given any two points $q_{1}, q_{2}\in \Delta$ there exist a
quaternionic M\"obius transformation $M$ of $\Delta_{\mathbb{H}}$ 
and a complex M\"obius transformation
$\phi$  of $\Delta_{\mathbb{C}^{2}}$ such that
$M(q_{1})=0=\phi(q_{1})$. 
Now $M$ and $\phi$ leave invariant, respectively,  $\delta_{\Delta}$ and
$k_\Delta$, and we want to investigate the relation between
$|M(q_{2})|$ and $|\phi(q_{2})|$. 
Consider $q_{1}=\alpha \cong (\alpha, 0)$ and $q_{2}=\beta
j\cong (0, \beta)$  with $\alpha, \beta \in \mathbb{C}$. 
Thanks to theorem \ref{teofonda}, 
choose $M$ to be 
$$
M(q)=(q-\alpha)(1-\overline{\alpha}q)^{-1}
$$
and with reference to \cite{Rud} choose 
$$
\phi_{(\alpha, 0)}(z,w)=\frac{(\alpha, 0) - (z, 0)
-(1-|\alpha|^{2})^{1/2}(0,w)}{1-z\overline{\alpha}}.
$$
We get 
\begin{equation}\label{Q}
|M(\beta j)|^{2}= |(\beta j - \alpha)(1-\overline{\alpha}\beta
j)^{-1}|^{2}=
\frac{(|\beta|^{2}+|\alpha|^{2})}{(1+|\alpha|^{2}|\beta|^{2})}
\end{equation}
and 
\begin{equation}\label{C}
|\phi_{(\alpha, 0)}(0,\beta)|^{2}=|(\alpha, 
-(1-|\alpha|^{2})^{1/2}\beta)|^{2}=|\alpha|^{2}+(1-|\alpha|^{2})|\beta|^{2}.
\end{equation}
Since the equality among (\ref{Q}) and (\ref{C}) does not hold in
general (due to the identity principle for real polynomials), remark \ref{coincidenza} leads to the following 

\begin{lemma} The identity map of $\Delta$ is not an isometry between the Kobayashi distance
$k_{\Delta}$ and the Poincar\'e distance $\delta_{\Delta}$. In particular $k_{\Delta}$ and 
$\delta_{\Delta}$ do not coincide.
\end{lemma}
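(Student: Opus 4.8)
The plan is to prove the equivalent statement that $\delta_{\Delta}$ and $k_{\Delta}$ do not coincide, since the identity map of $\Delta$ is an isometry between these two distances precisely when $\delta_{\Delta}(p,q)=k_{\Delta}(p,q)$ for every pair $p,q\in\Delta$. To produce a single pair witnessing $\delta_{\Delta}\neq k_{\Delta}$, I would exploit the two explicit computations (\ref{Q}) and (\ref{C}) already carried out, combined with the normalization recorded in remark \ref{coincidenza}.

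First I would fix the two points $q_1=\alpha\cong(\alpha,0)$ and $q_2=\beta j\cong(0,\beta)$ with $\alpha,\beta\in\mathbb{C}$, $|\alpha|,|\beta|<1$, and reduce both distances to distances from the origin. On the quaternionic side, the M\"obius transformation $M(q)=(q-\alpha)(1-\overline{\alpha}q)^{-1}$ of theorem \ref{teofonda} sends $q_1$ to $0$ and, being a $\delta_{\Delta}$-isometry by proposition \ref{invarianza}, gives $\delta_{\Delta}(q_1,q_2)=\delta_{\Delta}(0,M(q_2))$. On the complex side, the automorphism $\phi_{(\alpha,0)}$ of the unit ball of $\mathbb{C}^2$ sends $(\alpha,0)$ to $0$ and, being a biholomorphism, is a $k_{\Delta}$-isometry, so $k_{\Delta}(q_1,q_2)=k_{\Delta}(0,\phi_{(\alpha,0)}(0,\beta))$. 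Remark \ref{coincidenza} then rewrites both right-hand sides through the one-variable function $\delta_{\mathbb{D}}(0,r)=\tfrac12\log\tfrac{1+r}{1-r}$, namely $\delta_{\Delta}(q_1,q_2)=\delta_{\mathbb{D}}(0,|M(q_2)|)$ and $k_{\Delta}(q_1,q_2)=\delta_{\mathbb{D}}(0,|\phi_{(\alpha,0)}(0,\beta)|)$.

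Since $r\mapsto\tfrac12\log\tfrac{1+r}{1-r}$ is strictly increasing on $[0,1)$, the two distances agree at $(q_1,q_2)$ if and only if $|M(q_2)|=|\phi_{(\alpha,0)}(0,\beta)|$, i.e. if and only if the expressions (\ref{Q}) and (\ref{C}) are equal. I would then note that, regarded as functions of the two real parameters $|\alpha|^2$ and $|\beta|^2$, the quantities $\tfrac{|\beta|^2+|\alpha|^2}{1+|\alpha|^2|\beta|^2}$ and $|\alpha|^2+(1-|\alpha|^2)|\beta|^2$ are not identically equal, by the identity principle for real polynomials; a single explicit choice, for instance $|\alpha|^2=|\beta|^2=\tfrac12$ (which yields $\tfrac45$ against $\tfrac34$), already exhibits strict inequality. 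Hence $\delta_{\Delta}(q_1,q_2)\neq k_{\Delta}(q_1,q_2)$, so the identity map is not an isometry and $k_{\Delta}$ and $\delta_{\Delta}$ do not coincide.

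There is no serious obstacle here: the heart of the matter is the reduction to distances from the origin, which rests entirely on the invariance of $\delta_{\Delta}$ under $\mathbb{M}$ (proposition \ref{invarianza}) and of $k_{\Delta}$ under biholomorphic automorphisms, together with the shared normalization $\delta_{\Delta}(0,q)=k_{\Delta}(0,q)=\delta_{\mathbb{D}}(0,|q|)$ of remark \ref{coincidenza}. The only point that repays a moment's attention is the choice of test points: they must lie on \emph{different} complex coordinate axes, because the ball automorphism $\phi_{(\alpha,0)}$ acts asymmetrically on the two complex directions (through the factor $(1-|\alpha|^2)^{1/2}$) whereas $M$ does not; had $q_1$ and $q_2$ been taken on the same slice, both distances would have collapsed to the ordinary complex Poincar\'e distance and no discrepancy would appear. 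Once (\ref{Q}) and (\ref{C}) are in hand, the comparison is elementary.
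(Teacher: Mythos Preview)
Your proposal is correct and follows exactly the paper's approach: the paper derives the lemma directly from the preceding computations (\ref{Q}) and (\ref{C}) together with remark \ref{coincidenza}, observing that the two expressions are not identically equal by the identity principle for real polynomials. Your version is in fact slightly more explicit---you spell out the monotonicity of $r\mapsto\tfrac12\log\tfrac{1+r}{1-r}$ and supply a concrete witness $|\alpha|^2=|\beta|^2=\tfrac12$---but the strategy is identical.
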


\noindent As a direct corollary of the last two lemmas, and in accordance with a
classical consequence of the classification of non compact, rank 1, 
symmetric spaces (see, e.g., \cite{DNF},  \cite{H}), we can now state the following

\begin{teo}
There exists no isometry between the quaternionic Poincar\'e distance
and the Kobayashi distance of $\Delta_{\mathbb{H}}\cong
\Delta_{\mathbb{C}^{2}}$.
\end{teo}

Notice that, as already mentioned in the proof of theorem
\ref{distanza}, the Poincar\'e distance
and the Kobayashi distance coincide on the subsets of $\Delta$ of type
$\Delta_I=\Delta \cap L_I,$ where $L_{I} = \{ x+yI : x,y \in
\mathbb{R} \}$, for any $I \in \mathbb{S}$. Thanks to our geometrical 
approach, it is also immediate to
verify that all the real, sectional curvatures of the quaternionic, 
Poincar\'e differential metric at $0$ - and hence by homogeneity at
all points of
$\Delta$ - coincide with a same negative constant. As it is known, this is not the
case for the Kobayashi differential metric of
$\Delta_{\mathbb{C}^{2}}$ (see, e.g., \cite{KN}), for which only the
holomorphic, sectional curvatures at all points coincide with a same negative
constant.

\section{M\"obius transformations and the Poincar\'e distance on  $\hh^{+}$}

Similarly to what happens in the case of the complex plane, the
quaternionic half-space $\mathbb{H}^{+}=\{ q\in \mathbb{H} : \Re 
e(q)>0\}$ is diffeomorphic to the open, unit  disc
$\Delta_{\mathbb{H}}$ via the (biregular) Cayley 
transformation $\psi(q)=(1+q)(1-q)^{-1}\in \mathbb{G}$, (see
\cite{GS}). We can state here the following

\begin{lemma}\label{isometria} The Cayley transformation
    $\psi(q)=(1+q)(1-q)^{-1}$ maps non-Euclidean lines of
$\Delta_{\mathbb{H}}$
    onto  real, affine, half-lines or arcs of circles, which are
orthogonal to 
    $\partial\mathbb{H}^{+}$. 
\end{lemma}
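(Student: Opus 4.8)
The plan is to exploit that $\psi\in\mathbb{G}$, so that the two structural facts already proved for fractional linear transformations apply directly: by Lemma \ref{struttura di G} the map $\psi$ is conformal, and by Corollary \ref{cfreret} it carries circles and affine lines of $\mathbb{H}$ onto circles and affine lines. The whole statement then reduces to two ingredients: identifying the image of the boundary $\partial\Delta_{\mathbb{H}}=\mathbb{S}^{3}_{\mathbb{H}}$, and transporting orthogonality through the conformal map $\psi$.

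First I would pin down the boundary behaviour by a direct computation. Writing $(1-q)^{-1}=(1-\overline{q})|1-q|^{-2}$ and taking real parts one obtains
\begin{equation*}
\Re e\,(\psi(q))=\frac{1-|q|^{2}}{|1-q|^{2}},
\end{equation*}
so that $\psi$ sends $\Delta_{\mathbb{H}}=\{|q|<1\}$ onto $\mathbb{H}^{+}=\{\Re e>0\}$ and, in particular, maps $\mathbb{S}^{3}_{\mathbb{H}}=\partial\Delta_{\mathbb{H}}$ onto the imaginary hyperplane $\partial\mathbb{H}^{+}=\{\Re e=0\}$. The same conclusion also follows slicewise: since $\psi$ has real coefficients it preserves every plane $L_{I}=\mathbb{R}+I\mathbb{R}$, and on each $L_{I}\cong\mathbb{C}$ it is the classical Cayley transformation, which sends the unit circle to the imaginary axis.

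Next, let $l$ be a non-Euclidean line of $\Delta_{\mathbb{H}}$ with ends $q_{3},q_{4}\in\mathbb{S}^{3}_{\mathbb{H}}$. By Definition \ref{noneuclidean}, $l$ is either a circle meeting $\mathbb{S}^{3}_{\mathbb{H}}$ orthogonally or a diameter; in the latter case the radial direction of the diameter is orthogonal to the sphere at the endpoints, so in both cases $l$ meets $\partial\Delta_{\mathbb{H}}$ orthogonally. Since $\psi$ is conformal and maps $\partial\Delta_{\mathbb{H}}$ onto $\partial\mathbb{H}^{+}$, the image $\psi(l)$ meets $\partial\mathbb{H}^{+}$ orthogonally, while by Corollary \ref{cfreret} it lies on a circle or on an affine line of $\mathbb{H}$. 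This already yields the orthogonality assertion. The dichotomy between arcs and half-lines is then settled by looking at the pole $q=1$ of $\psi$, for which $\psi(1)=\infty$: if $1\notin\{q_{3},q_{4}\}$ the image is bounded, hence a circular arc joining $\psi(q_{3}),\psi(q_{4})\in\partial\mathbb{H}^{+}$; if instead $1$ is one of the ends, that end is sent to $\infty$, so $\psi(l)$ lies on an affine line and the portion inside $\mathbb{H}^{+}$ is a half-line issuing orthogonally from $\partial\mathbb{H}^{+}$ (the model case being the real diameter $(-1,1)$, mapped onto the positive real half-axis).

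The point requiring the most care is the invariance of orthogonality, because it concerns the angle between a one-dimensional curve and a three-dimensional hypersurface in $\mathbb{H}\cong\mathbb{R}^{4}$. This is nonetheless immediate once conformality is in force: the differential of $\psi$ is a positive scalar times an orthogonal map, hence it preserves all pairwise angles of tangent vectors; as $\psi$ carries the tangent space of $\mathbb{S}^{3}_{\mathbb{H}}$ onto the tangent space of $\partial\mathbb{H}^{+}$, a tangent vector of $l$ orthogonal to every direction of the sphere is sent to a vector orthogonal to every direction of $\partial\mathbb{H}^{+}$. Once this and the identification $\psi(\partial\Delta_{\mathbb{H}})=\partial\mathbb{H}^{+}$ are established, the remaining verifications are routine.
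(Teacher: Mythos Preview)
Your proof is correct and follows exactly the same approach as the paper: invoke Corollary \ref{cfreret} to send circles/lines to circles/lines, use the conformality of $\psi$ from Lemma \ref{struttura di G}, and combine this with the fact that $\psi(\mathbb{S}^{3}_{\mathbb{H}})=\partial\mathbb{H}^{+}$. The paper's proof is a single sentence citing these three ingredients, while you additionally supply the explicit computation of $\Re e(\psi(q))$, the arc/half-line dichotomy via the pole at $q=1$, and the justification of why conformality transports orthogonality between a curve and a hypersurface; these are welcome details but do not change the strategy.
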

\begin{proof} By corollary \ref{cfreret}, the conformality of
$\psi$ (see lemma \ref{struttura di G}), and the fact that it
 transforms $\mathbb{S}^{3}_{\mathbb{H}}$ onto
$\partial\mathbb{H}^{+}$, lead to the conclusion.
\end{proof}

It becomes now easy to define the Poincar\'e-type distance on 
$\mathbb{H}^{+}$. Given any two points $q_{1} \neq q_{2}\in
\mathbb{H}^{+}$, we can in fact consider the unique, affine, half-line
or arc of circle, $l$, of $\mathbb{H}^{+}$ which contains $q_{1},
q_{2}$ and
intersects $\partial\mathbb{H}^{+}$ orthogonally. 
We call such an $l$ the {\it non-Euclidean line} (of
$\mathbb{H}^{+}$) containing
$q_{1}$ and $q_{2}$.

We then define the two
intersections $q_{3}, q_{4}$ of $l$ with $\partial\mathbb{H}^{+}$ to
be the {\it ends} of 
$l$ (one of them might be $\infty$)  in such a
way that $q_{1}, q_{2}, q_{3}, q_{4}$ are arranged cyclically on $l$. 
Then we set 
\begin{equation}\label{poincare H} 
    \omega(q_{1}, q_{2})= \frac{1}{2}\log (\ratio(q_1,q_2,q_3,q_4))
\end{equation}
to be the {\it Poincar\'e distance}  (of $\mathbb{H}^{+}$)  between
$q_{1}$ and $q_{2}$.

\begin{teo}\label{iso} The map $w$ is a distance and the Cayley
    transformation $\psi: \Delta_{\mathbb{H}} \to \mathbb{H}^{+}$ is
an isometry 
    with respect to the Poincar\'e distances
    of $\Delta_{\mathbb{H}}$ and $\mathbb{H}^{+}$.
    \end{teo}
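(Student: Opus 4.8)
The plan is to prove the isometry relation $\omega(\psi(q_1),\psi(q_2)) = \delta_{\Delta}(q_1,q_2)$ first, and then obtain the fact that $\omega$ is a distance as a formal consequence, transferring the distance axioms across the bijection $\psi$ from $\delta_{\Delta}$, which is already known to be a distance by Proposition \ref{distanza}. This handles both assertions of the theorem at once.

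First I would fix $q_1 \neq q_2 \in \Delta_{\mathbb{H}}$ and let $l$ be the non-Euclidean line through them, with ends $q_3, q_4 \in \mathbb{S}^{3}_{\mathbb{H}} = \partial\Delta_{\mathbb{H}}$ chosen so that $q_1,q_2,q_3,q_4$ are arranged cyclically on $l$. By Lemma \ref{isometria} the Cayley map $\psi$ carries $l$ onto the non-Euclidean line $\psi(l)$ of $\mathbb{H}^{+}$; since $\psi$ sends $\mathbb{S}^{3}_{\mathbb{H}}$ onto $\partial\mathbb{H}^{+}$, the ends $q_3,q_4$ of $l$ are taken to the ends $\psi(q_3),\psi(q_4)$ of $\psi(l)$ (one of them possibly $\infty$). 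As $\psi$ is a homeomorphism of the relevant one-point compactifications, it preserves the cyclic order, so $\psi(q_1),\psi(q_2),\psi(q_3),\psi(q_4)$ are arranged cyclically on $\psi(l)$ and are therefore exactly the four points entering the definition of $\omega(\psi(q_1),\psi(q_2))$ in (\ref{poincare H}).

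The crux is cross-ratio invariance. Since $q_1,q_2,q_3,q_4$ lie on the circle (or diameter) $l$, their cross-ratio $\ratio(q_1,q_2,q_3,q_4)$ is real by Theorem \ref{birreal}, and in fact real and greater than $1$ by Proposition \ref{arrcyc}. Because $\psi \in \mathbb{G}$, Corollary \ref{crreale} gives
\begin{equation*}
\ratio(\psi(q_1),\psi(q_2),\psi(q_3),\psi(q_4)) = \ratio(q_1,q_2,q_3,q_4).
\end{equation*}
Applying $\tfrac{1}{2}\log$ to both sides yields $\omega(\psi(q_1),\psi(q_2)) = \delta_{\Delta}(q_1,q_2)$; in particular the cross-ratio on the left is the same real number greater than $1$, so $\omega$ is well defined with a positive real logarithm. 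Finally, since $\psi:\Delta_{\mathbb{H}}\to\mathbb{H}^{+}$ is a bijection and $\omega(p_1,p_2)=\delta_{\Delta}(\psi^{-1}(p_1),\psi^{-1}(p_2))$ for all $p_1,p_2\in\mathbb{H}^{+}$, the symmetry, non-negativity, vanishing exactly on the diagonal, and triangle inequality for $\omega$ follow immediately from the corresponding properties of $\delta_{\Delta}$ proved in Proposition \ref{distanza}.

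The main obstacle I anticipate is the boundary bookkeeping: checking that ends are sent to ends with the correct cyclic orientation, and treating the case in which one end of $\psi(l)$ is $\infty$, so that the cross-ratio identity must be read through the $\mathbb{H}\cup\{\infty\}$ conventions used in the definition of $\ratio$. Once this is settled, everything reduces to a direct appeal to cross-ratio invariance (Corollary \ref{crreale}) together with the transfer of the distance axioms across the bijection $\psi$.
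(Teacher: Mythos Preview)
Your proposal is correct and follows essentially the same route as the paper's own proof: the paper simply invokes Corollary \ref{crreale} to say that $\psi\in\mathbb{G}$ preserves the (real) cross-ratio of four points on a non-Euclidean line of $\Delta_{\mathbb{H}}$, and concludes. You have spelled out the details the paper leaves implicit --- that $\psi$ carries non-Euclidean lines to non-Euclidean lines with ends going to ends in the right cyclic order (via Lemma \ref{isometria}), and that the distance axioms for $\omega$ then transfer across the bijection $\psi$ from those of $\delta_{\Delta}$ --- but the argument is the same.
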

\begin{proof} Corollary \ref{crreale} implies that $\psi$ leaves
    invariant the
    cross ratio of four points belonging to a same non-Euclidean line 
    of $\Delta_{\mathbb{H}}$. The assertion follows.
    \end{proof}
    
We end the paper with the description of all the isometries of the
Poincar\'e distance $\omega$ of $\mathbb{H}^{+}$.

\begin{prop} The Poincar\'e distance of $\mathbb{H}^{+}$ is invariant 
    under the action of the group $\mathbb{M}^{*}(\hh^{+}):=\psi\mathbb{M}^{*}\psi^{-1}$, where
    $\mathbb{M}^{*}$ is the group of extended M\"obius transformations
and $\psi$ is the Cayley transformation.
    \end{prop}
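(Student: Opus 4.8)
The plan is to reduce the invariance of $\omega$ under $\mathbb{M}^{*}(\hh^{+})$ to the already-established invariance of $\delta_{\Delta}$ under $\mathbb{M}^{*}$, transporting everything across the Cayley transformation $\psi$, which by Theorem \ref{iso} is an isometry between the two Poincar\'e distances. First I would fix an arbitrary element $\Phi \in \mathbb{M}^{*}(\hh^{+})$; by the very definition of the conjugated group $\mathbb{M}^{*}(\hh^{+}) = \psi\mathbb{M}^{*}\psi^{-1}$, there is some $g \in \mathbb{M}^{*}$ with $\Phi = \psi \circ g \circ \psi^{-1}$. Given two points $q_1, q_2 \in \hh^{+}$, I would set $p_i = \psi^{-1}(q_i) \in \Delta_{\mathbb{H}}$, which is legitimate since $\psi$ is a (biregular) diffeomorphism from $\Delta_{\mathbb{H}}$ onto $\hh^{+}$.

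The core of the argument is then a three-step chain. Applying the isometry statement of Theorem \ref{iso}, we have $\omega(\Phi(q_1), \Phi(q_2)) = \omega(\psi(g(p_1)), \psi(g(p_2))) = \delta_{\Delta}(g(p_1), g(p_2))$. Next, invoking Proposition \ref{invarianza}, which guarantees that $\delta_{\Delta}$ is invariant under every extended M\"obius transformation of $\Delta_{\mathbb{H}}$, and using $g \in \mathbb{M}^{*}$, the right-hand side equals $\delta_{\Delta}(p_1, p_2)$. Finally, applying Theorem \ref{iso} once more yields $\delta_{\Delta}(p_1, p_2) = \omega(\psi(p_1), \psi(p_2)) = \omega(q_1, q_2)$, and the chain closes.

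The argument is essentially formal once the isometry property of $\psi$ is in hand, so I do not expect a genuinely hard step. The only point deserving a line of care is the bookkeeping of the conjugation: one must check that $\psi \circ g \circ \psi^{-1}$ maps $\hh^{+}$ to $\hh^{+}$ (so that $\omega$ is defined at the image points) and that $\psi^{-1}(q_i)$ really lands in $\Delta_{\mathbb{H}}$; both facts follow immediately from $\psi$ being a bijection between $\Delta_{\mathbb{H}}$ and $\hh^{+}$ together with $g$ preserving $\Delta_{\mathbb{H}}$. It is worth remarking that this conjugation scheme is precisely the mechanism making $\mathbb{M}^{*}(\hh^{+})$ the isometry group of $\omega$, in exact analogy with the classical passage between the automorphisms of the disc and those of the half-plane in the complex case.
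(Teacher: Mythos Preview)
Your argument is correct and is exactly the implicit reasoning the paper has in mind: the proposition is stated in the paper without proof, as an immediate consequence of Theorem \ref{iso} (the Cayley transformation is an isometry) together with Proposition \ref{invarianza} (invariance of $\delta_{\Delta}$ under $\mathbb{M}^{*}$), and your three-step chain spells this out precisely.
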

    
\begin{definiz}\label{GLH+} The group af all linear fractional
    transformations of $\mathbb{H}^{+}$ will be denoted by $\GL$ and
    called the group of {\it M\"obius transformations} of $\hh^{+}$.
\end{definiz}

The study of an explicit description of the group 
$\GL=\psi\mathbb{M}\psi^{-1}$ of all fractional
linear transformations of $\mathbb{H}^{+}$ has a natural independent
interest. We will perform it here, starting from the identification
of 
its isotropy subgroup $\GL_{\infty}$ at the point $\infty$.

\begin{prop}\label{isotropia} If $g \in \GL_{\infty}$, then there
exist 
    $b, d\in \hh$ with $d\neq 0$ and $\Re e(bd^{-1})=0$,  such 
    that $g$ is the fractional linear transformation associated to the
    matrix
    \begin{equation}\label{forma isotropia}
\left[ \begin{array}{ll}

             |d|^{-2} d & b \\
             0 & d \\
             \end{array} \right]
	\end{equation}
    that is
\begin{equation}\label{forma isotropia 2}
g(q)= |d|^{-2}dqd^{-1}+bd^{-1}.
	\end{equation}    
\end{prop}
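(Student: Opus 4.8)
The plan is to determine the form of an element $g$ of the isotropy subgroup $\GL_{\infty}$ by combining the condition that $g$ fixes $\infty$ with the condition that $g$ maps $\hh^{+}$ onto itself. Writing $g(q)=(aq+b)(cq+d)^{-1}$ with $det_{\hh}(A)\neq 0$, I would first observe that $g$ fixes $\infty$ if and only if $c=0$: for $c\neq 0$ one computes $g(\infty)=ac^{-1}\in\hh$, which is finite, whereas for $c=0$ the map $g(q)=(aq+b)d^{-1}=aqd^{-1}+bd^{-1}$ clearly sends $\infty$ to $\infty$. Since $det_{\hh}(A)=|a|\,|d|\neq 0$ when $c=0$, both $a$ and $d$ are nonzero, so $g$ takes the affine form $g(q)=aqd^{-1}+bd^{-1}$.

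Next I would exploit the boundary behaviour. As $g$ is a homeomorphism of $\hh\cup\{\infty\}$ carrying $\hh^{+}$ onto itself and fixing $\infty$, it maps the finite boundary $\partial\hh^{+}=\{q:\Re e(q)=0\}$ onto itself. Evaluating at $q=0$ gives $g(0)=bd^{-1}$, so $\Re e(bd^{-1})=0$, which is the first asserted condition. For an arbitrary purely imaginary $q$ the boundary condition then forces $\Re e(aqd^{-1})=0$; using $\Re e(uv)=\Re e(vu)$ I rewrite this as $\Re e((d^{-1}a)q)=0$ for all $q$ with $\Re e(q)=0$. Writing $m=d^{-1}a=m_0+\vec m$ with $m_0=\Re e(m)$ and taking $q=\vec q$ purely imaginary, the real part of the product of two purely imaginary quaternions equals minus their Euclidean inner product, so $\Re e(mq)=-\langle\vec m,\vec q\rangle$; vanishing for every $\vec q$ forces $\vec m=0$, i.e. $d^{-1}a=\rho\in\rr\setminus\{0\}$ and hence $a=\rho d$.

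I would then pin down the sign of $\rho$ and normalise. Since conjugation $q\mapsto dqd^{-1}$ preserves both modulus and real part (cf. Lemma~\ref{normale}), and since $\Re e(bd^{-1})=0$, one gets $\Re e(g(q))=\rho\,\Re e(dqd^{-1})+\Re e(bd^{-1})=\rho\,\Re e(q)$. Because $g$ sends $\{\Re e(q)>0\}$ into itself, this is possible only if $\rho>0$. Finally, recalling that the real scalar matrices form the kernel of the representation $A\mapsto L_{A}$, I would replace the representative $\left[\begin{smallmatrix}\rho d & b\\ 0 & d\end{smallmatrix}\right]$ by its multiple by the positive real $t=1/(\sqrt{\rho}\,|d|)$: solving $t\rho d=t^{-1}|d|^{-2}d$ shows this choice makes the top-left entry equal to $|td|^{-2}(td)$, so that with $d'=td$ and $b'=tb$ (still satisfying $\Re e(b'(d')^{-1})=0$) the matrix acquires the stated form~(\ref{forma isotropia}), and reading off $g(q)=|d|^{-2}dqd^{-1}+bd^{-1}$ yields~(\ref{forma isotropia 2}).

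The main obstacle is the algebraic step identifying $d^{-1}a$ as a real number from the boundary condition, which rests on the bilinear-form structure of the quaternionic product, together with the bookkeeping needed to pass from the general representative $\left[\begin{smallmatrix}\rho d & b\\ 0 & d\end{smallmatrix}\right]$ to the precise normalised matrix; extracting the positivity $\rho>0$ from the orientation of $g$ on $\hh^{+}$ is the conceptual point that guarantees the required real rescaling exists.
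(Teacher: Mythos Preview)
Your proof is correct and follows the same overall architecture as the paper's: deduce $c=0$ from $g(\infty)=\infty$, use the boundary condition $g(\partial\hh^{+})\subset\partial\hh^{+}$ to extract both $\Re e(bd^{-1})=0$ and $a=\rho d$ with $\rho\in\rr$, argue $\rho>0$ from the orientation of $\hh^{+}$, and then rescale by a real scalar using the kernel of $A\mapsto L_A$.

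The one genuine difference is in the algebraic step showing that $a$ and $d$ are real proportional. The paper writes $a=a_0+a_1L$ and $d^{-1}=d_0+d_1M$ in polar form and analyses $\Re e(aId^{-1})=0$ by a case split on whether $L,M\in\s$ are $\rr$-linearly independent, eventually obtaining $ad^{-1}\in\rr$. You instead use the cyclicity $\Re e(aqd^{-1})=\Re e((d^{-1}a)q)$ to reduce immediately to $\Re e(m\vec q)=-\langle\vec m,\vec q\rangle=0$ for all purely imaginary $\vec q$, forcing $\vec m=0$. Your route is shorter and avoids the case analysis; the paper's computation is more hands-on but reaches the same conclusion. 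Your explicit rescaling by $t=(\sqrt{\rho}\,|d|)^{-1}$ also spells out what the paper leaves as ``without loss of generality''.
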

\begin{proof} Let $g(q)=(aq+b)(cq+d)^{-1}$. Condition
$g(\infty)=\infty$ 
    implies that $c=0$. Since $g\in \GL$, if $\Re e(q)=0$ then $\Re
    e(g(q))=0$. Therefore, if we set $q=yI$, with $y\in \rr$ and
$I\in 
    \s$, we have $\Re e(ayId^{-1}) + \Re e(bd^{-1})=0$ for all $y\in
    \rr$ and all $I\in \s$. This is equivalent to require $\Re
    e(aId^{-1})=0$ for all $I\in \s$ and $\Re e(bd^{-1})=0$.
    Let us set $a=a_{0}+a_{1}L$ and $d^{-1}=d_{0}+d_{1}M$, with
$a_{0},
    a_{1}, d_{0}, d_{1} \in \rr$ and $L, M \in \s$. The equality 
    $\Re e(aId^{-1})=0$ becomes 
    \begin{equation}\label{reale generale}
    \Re e(a_{0}d_{0}I+a_{0}d_{1}IM+a_{1}d_{0}LI+a_{1}d_{1}LIM)=0.
    \end{equation}
    Now, if $I$ is
    orthogonal to both $L$ and $M$  then (\ref{reale generale})
becomes
    (see \cite{GS} for notations)
    \begin{equation}\label{reale particolare}
    \Re e(a_{1}d_{1}LIM)=a_{1}d_{1}\langle L\times I, M\rangle=0.
    \end{equation} 
If $L$ and $M$ are $\rr$-linearly
    dependent, equation (\ref{reale particolare}) gives no conditions.
   If, otherwise,  $L$ and $M$ are $\rr$-linearly
    independent it implies $a_{1}d_{1}=0$ which directly yields
    $a, d\in \rr$, and the assertion follows. We can therefore suppose
    from now on that $L$ and $M$ are $\rr$-linearly
    dependent. If we choose $I=L=M$ equation (\ref{reale generale})
reduces 
    to
     \begin{equation}\label{reale terzocaso}
    a_{0}d_{1}+a_{1}d_{0}=0.
    \end{equation}
    Since $a_{0}d_{1}+a_{1}d_{0}=\Im m(ad^{-1})$, this leads to
    $ad^{-1}\in \rr$, i.e. $a=rd$ for some $r\in \rr$. Taking into
    occount that $\Re
    e(q)>0$ implies $\Re e(g(q))>0$, the real number $r$ has to be
   strictly positive, and can be chosen to be equal to
   $|d|^{-2}$ without loss of generality. The assertion is proved.
\end{proof}
\begin{nota} \label{conjugate}
Observe that, as a consequence of our choice of $r$, the product of
$|d|^{-2}d$ for $\overline{d},$ i.e. of one element for the conjugate
of the other on the principal diagonal of matrices in $\GL_{\infty},$
is $1.$
\end{nota}    
Let us consider the element $f_{\gamma}\in \GL$ defined by 
\begin{equation}\label{infinito finito}
    f_{\gamma}(q)=(q-\gamma)^{-1}
    \end{equation}
and associated to the matrix 
\begin{equation}\label{infinito finito matrice}
\left[ \begin{array}{ll}

             0 & 1 \\
             1 & -\gamma \\
             \end{array} \right]
\end{equation}
with $\Re e(\gamma) = 0$. We have that $f_{\gamma}(\gamma) = \infty$.

\begin{teo}\label{struttura gruppo GLH+} Let $g\in \GL \setminus
    \GL_{\infty}$. Then there
    exist $\alpha, \beta, \gamma \in \hh$  with $\alpha \neq 0$ and $\Re
    e(\gamma)=0=\Re e(\beta\alpha^{-1})$ such that
    $g(q)=(|\alpha|^{-2} \gamma\alpha q + \gamma\beta +
\alpha)(|\alpha|^{-2} \alpha q +
    \beta)^{-1}$ is associated to the matrix 
    \begin{equation}\label{matrice generale}
\left[ \begin{array}{ll}

             |\alpha|^{-2} \gamma\alpha & \gamma\beta + \alpha \\
             |\alpha|^{-2} \alpha & \beta \\
             \end{array} \right].
\end{equation}
\end{teo}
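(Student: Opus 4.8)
The plan is to build an arbitrary $g\in\GL\setminus\GL_{\infty}$ out of the two objects already isolated in this section: the isotropy subgroup $\GL_{\infty}$, whose elements are described completely by Proposition \ref{isotropia}, and the single transformation $f_{\gamma}(q)=(q-\gamma)^{-1}$ of (\ref{infinito finito}), which sends a finite boundary point to $\infty$. First I would produce the parameter $\gamma$. Since $g$ is a fractional linear transformation mapping $\hh^{+}$ onto itself, it extends to a self-homeomorphism of $\hh\cup\{\infty\}$ carrying the boundary $\partial\hh^{+}=\{q:\Re e(q)=0\}\cup\{\infty\}$ onto itself. As $g\notin\GL_{\infty}$, the image $\gamma:=g(\infty)$ is a finite point of $\partial\hh^{+}$, so $\Re e(\gamma)=0$; this is exactly the requirement on $\gamma$.

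Next I would reduce $g$ to an element of the isotropy group. One checks that $f_{\gamma}\in\GL$: if $\Re e(\gamma)=0$ and $\Re e(q)>0$, then $\Re e(q-\gamma)=\Re e(q)>0$, and the quaternionic inverse preserves the sign of the real part, so $f_{\gamma}$ maps $\hh^{+}$ onto itself. Because $f_{\gamma}(\gamma)=\infty$, the composition $h:=f_{\gamma}\circ g$ satisfies $h(\infty)=f_{\gamma}(g(\infty))=f_{\gamma}(\gamma)=\infty$, hence $h\in\GL_{\infty}$. Proposition \ref{isotropia}, with its matrix (\ref{forma isotropia}), then supplies $\alpha,\beta\in\hh$ (renaming $d=\alpha$, $b=\beta$) with $\alpha\neq 0$ and $\Re e(\beta\alpha^{-1})=0$ such that $h$ is associated to $\left[\begin{array}{ll}|\alpha|^{-2}\alpha & \beta\\ 0 & \alpha\end{array}\right]$. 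This fixes the remaining two parameters together with their constraints.

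Finally I would recover $g$ and compute its matrix, invoking the homomorphism property of $\Phi$ from Theorem \ref{homo}, by which composition of transformations corresponds to multiplication of representing matrices (well defined up to a nonzero real scalar, the kernel of $\Phi$). A direct multiplication shows that $\left[\begin{array}{ll}\gamma & 1\\ 1 & 0\end{array}\right]$ is the inverse of the matrix (\ref{infinito finito matrice}) of $f_{\gamma}$, so it represents $f_{\gamma}^{-1}$ (equivalently $f_{\gamma}^{-1}(q)=\gamma+q^{-1}$). Since $g=f_{\gamma}^{-1}\circ h$, the matrix of $g$ is
$$\left[\begin{array}{ll}\gamma & 1\\ 1 & 0\end{array}\right]\left[\begin{array}{ll}|\alpha|^{-2}\alpha & \beta\\ 0 & \alpha\end{array}\right]=\left[\begin{array}{ll}|\alpha|^{-2}\gamma\alpha & \gamma\beta+\alpha\\ |\alpha|^{-2}\alpha & \beta\end{array}\right],$$
using that the real scalar $|\alpha|^{-2}$ commutes; this is precisely (\ref{matrice generale}), and reading off the associated fractional linear transformation yields the stated formula for $g(q)$.

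The routine parts are the two sign computations (that $f_{\gamma}\in\GL$ and that the claimed inverse matrix is correct) and the final $2\times 2$ product. The one genuinely delicate step, which I would justify carefully rather than assume, is the boundary argument giving $\Re e(\gamma)=0$: it relies on the fact that an element of $\GL$ maps $\partial\hh^{+}$, including the point $\infty$, onto itself, and this should be deduced from $g$ being a self-homeomorphism of $\hh^{+}$ that extends continuously to $\hh\cup\{\infty\}$. Once $\gamma$ is in hand, everything else follows mechanically from the matrix–transformation correspondence already established.
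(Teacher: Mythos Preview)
Your proof is correct and follows essentially the same approach as the paper: set $\gamma=g(\infty)\in\partial\hh^{+}$, compose with $f_{\gamma}$ to land in $\GL_{\infty}$, apply Proposition~\ref{isotropia}, and then multiply by the inverse matrix $\left[\begin{smallmatrix}\gamma & 1\\ 1 & 0\end{smallmatrix}\right]$ to obtain (\ref{matrice generale}). You are somewhat more careful than the paper in justifying that $f_{\gamma}\in\GL$ and that $\gamma$ lies on the boundary, but the argument is otherwise identical.
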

\begin{proof}
Let $g(q)=(aq+b)(cq+d)^{-1}$ be a fractional linear transformation of
$\mathbb{H}^+$ such that  $g(\infty)=ac^{-1}=\gamma\in
\partial\hh^+=\{ q\in \hh : \Re e(q)=0\}.$ Then $(f_{\gamma} \circ
g)$ fixes $\infty$ and by Proposition \ref{isotropia} there exist
$\alpha, \beta \in \mathbb{H}$ with $\Re e (\beta \alpha^{-1})=0$
such that:
\begin{equation}
\left[ \begin{array}{ll}

              0 & 1 \\
              1 & -ac^{-1} \\
             \end{array} \right]
             \cdot
\left[ \begin{array}{ll}

              a & b \\
              c & d \\
             \end{array} \right]
             =
  \left[ \begin{array}{ll}

              |\alpha|^{-2} \alpha & \beta \\
              0 & \alpha \\
             \end{array} \right].      
\end{equation}
Therefore:
\begin{equation}
\left[ \begin{array}{ll}

              a & b \\
              c & d \\
             \end{array} \right]
             =
             \left[ \begin{array}{ll}

              \gamma & 1 \\
              1 & 0 \\
             \end{array} \right]
             \cdot
             \left[ \begin{array}{ll}

              |\alpha|^{-2} \alpha & \beta \\
              0 & \alpha \\
             \end{array} \right]
             =
           \left[ \begin{array}{ll}

             |\alpha|^{-2} \gamma\alpha & \gamma\beta + \alpha \\
             |\alpha|^{-2} \alpha & \beta \\
             \end{array} \right].            
             \end{equation}
\end{proof}

The above results give us an idea on how to describe the group of all
M\"obius transformations of $\mathbb{H}^+$ in a more direct form.
In fact we will prove now that 

\begin{teo}\label{glh+}  If $K=\left[ \begin{array}{ll}

              0 & 1 \\
              1 & 0 \\
             \end{array} \right] $, 
    then the set of matrices defined by 
     \begin{equation*} 
	     \GLH = \left\{   A\in M(2,\mathbb{H}) :\ \ 
	     ^{t}\overline{A}KA=K \right\}
\end{equation*}
is a subgroup of $SL(2, \hh)$ of real dimension 10.  Moreover, 
\begin{equation*}
\GLH = \left\{   \left[ \begin{array}{ll}

              a & b \\
              c & d \\
             \end{array} \right] \in M(2, \mathbb{H}):\,\,\,  
	     \Re e (a\overline{c})=0,\,\,\, \Re e
(b\overline{d})=0, \,\,\, \overline{b}c+\overline{d}a=1  \right\}.
\end{equation*}

\end{teo}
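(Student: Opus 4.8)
The plan is to split the argument into three essentially independent tasks: the subgroup structure together with the inclusion $\GLH\subset SL(2,\hh)$, the explicit description of the entries, and the dimension count. Throughout I work directly from the defining relation ${}^{t}\overline{A}KA=K$, using the anti-homomorphism property ${}^{t}\overline{AB}={}^{t}\overline{B}\,{}^{t}\overline{A}$ and the fact that ${}^{t}\overline{K}=K$ and $K^{2}=I$.

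For the subgroup structure, closure is immediate, since ${}^{t}\overline{AB}K(AB)={}^{t}\overline{B}({}^{t}\overline{A}KA)B={}^{t}\overline{B}KB=K$, and $I\in\GLH$ trivially. The delicate point is the inclusion in $SL(2,\hh)$. First I would check that $det_{\hh}({}^{t}\overline{A})=det_{\hh}(A)$ directly from the formula of Definition \ref{determinante}; the only nontrivial step is the identity $\Re e(\overline{b}a\overline{c}d)=\Re e(c\overline{a}b\overline{d})$, which follows from $\Re e(u)=\Re e(\overline{u})$ and $\Re e(uv)=\Re e(vu)$. Since $det_{\hh}(K)=1$, the Binet property (Proposition \ref{Binet}) applied twice yields $det_{\hh}(A)^{2}=det_{\hh}({}^{t}\overline{A})\,det_{\hh}(K)\,det_{\hh}(A)=det_{\hh}(K)=1$, so $det_{\hh}(A)=1$; in particular $A$ is invertible. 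Then, multiplying ${}^{t}\overline{A}KA=K$ on the left by $({}^{t}\overline{A})^{-1}={}^{t}\overline{A^{-1}}$ and on the right by $A^{-1}$ gives ${}^{t}\overline{A^{-1}}KA^{-1}=K$, so $A^{-1}\in\GLH$. This settles the subgroup axioms and the inclusion $\GLH\subset SL(2,\hh)$.

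For the explicit description, I would simply multiply out
\begin{equation*}
{}^{t}\overline{A}KA=\left[\begin{array}{ll} \overline{c}a+\overline{a}c & \overline{c}b+\overline{a}d \\ \overline{d}a+\overline{b}c & \overline{d}b+\overline{b}d \end{array}\right].
\end{equation*}
The diagonal entries are $2\Re e(a\overline{c})$ and $2\Re e(b\overline{d})$, hence automatically real, and the whole matrix is Hermitian because ${}^{t}\overline{({}^{t}\overline{A}KA)}={}^{t}\overline{A}\,{}^{t}\overline{K}\,A={}^{t}\overline{A}KA$. Setting it equal to $K$ therefore forces exactly $\Re e(a\overline{c})=0$, $\Re e(b\overline{d})=0$ and $\overline{b}c+\overline{d}a=1$; the remaining entry $\overline{c}b+\overline{a}d$ is the conjugate of $\overline{b}c+\overline{d}a$ and so imposes no further condition. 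This produces precisely the claimed set.

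For the dimension I would use a submersion argument. Consider $F:M(2,\hh)\to \mathrm{Herm}(2,\hh)$, $F(A)={}^{t}\overline{A}KA$, into the space of Hermitian $2\times2$ quaternionic matrices, which has $\dim_{\rr}=6$ (two real diagonal entries and one quaternionic off-diagonal entry). The differential at $A_{0}\in\GLH$ is $dF_{A_{0}}(X)={}^{t}\overline{X}KA_{0}+{}^{t}\overline{A_{0}}KX$; writing $X=A_{0}Y$ and using ${}^{t}\overline{A_{0}}KA_{0}=K$ reduces it to ${}^{t}\overline{Y}K+KY$, and for any Hermitian $H$ the choice $Y=\tfrac{1}{2}KH$ solves ${}^{t}\overline{Y}K+KY=H$. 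Hence $dF_{A_{0}}$ is surjective, $K$ is a regular value, and $\GLH=F^{-1}(K)$ is a submanifold of real dimension $16-6=10$; equivalently, the tangent space at the identity is the Lie algebra $\{X:{}^{t}\overline{X}K+KX=0\}$, whose solutions have the upper-left entry free, the two off-diagonal entries purely imaginary, and the lower-right entry equal to minus the conjugate of the upper-left, giving $4+3+3=10$. I expect the inclusion $\GLH\subset SL(2,\hh)$ to be the main obstacle, since the Dieudonn\'e determinant is not a priori compatible with the conjugate-transpose; the identity $det_{\hh}({}^{t}\overline{A})=det_{\hh}(A)$ together with Proposition \ref{Binet} is exactly what resolves it, while the non-commutativity in the dimension count is isolated into the single clean verification that $dF$ is surjective.
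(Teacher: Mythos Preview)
Your proposal is correct and follows essentially the same route as the paper: the identity $det_{\hh}({}^{t}\overline{A})=det_{\hh}(A)$ combined with the Binet property for the inclusion in $SL(2,\hh)$, the anti-homomorphism property of the conjugate transpose for closure and inverses, and direct multiplication of ${}^{t}\overline{A}KA$ for the explicit entry conditions. The only difference is that the paper dispatches the dimension with a one-line ``a direct computation shows'', whereas you supply an actual submersion/Lie-algebra argument; your version is strictly more informative on that point.
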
 

\begin{proof} Since
$det_{\mathbb{H}}(^{t}\overline{A})=det_{\mathbb{H}}(A)$, for any $A\in M(2, \mathbb{H})$, the relation
$^{t}\overline{A}KA=K$ implies, via the Binet property (proposition \ref{Binet})
that $\GLH \subset SL(2, \mathbb{H})$. Let us now notice that
\begin{equation}
    ^{t}\overline{AB}=^{t}\overline{B}\ ^{t}\overline{A}
\end{equation}
for all $A, B \in SL(2,\mathbb{H})$, which implies
\begin{equation}
(^{t}\overline{A})^{-1}=^{t}\overline{(A^{-1})}
\end{equation}
for all $A \in SL(2,\mathbb{H})$. We will prove that $\GLH$ is a
group. In fact, for all $A, B \in
\GLH$, we have
\begin{equation}
^{t}\overline{(AB)}K(AB)= ^{t}\overline{B}\ (^{t}\overline{A}KA)B=
^{t}\overline{B}KB=K
\end{equation}
and therefore $AB\in \GLH$. Moreover, since by definition $^{t}\overline{A}KA=K$, we obtain
\begin{equation}
^{t}\overline{(A^{-1})}KA^{-1}=(^{t}\overline{A})^{-1}KA^{-1}=K.
\end{equation}
i.e., $A^{-1}\in \GLH$ for all $A\in \GLH$. Take now any
\begin{equation*}
 A=\left[ \begin{array}{ll}

              a  & b \\
             c & d \\
             \end{array} \right]   \in M(2, \mathbb{H}).
\end{equation*}
Since 
\begin{equation*}
^{t}\overline{A}KA=\left[ \begin{array}{ll}

              2\Re e(a\overline{c})  & \overline{c}b+\overline{a}d \\
              \overline{b}c+\overline{d}a & 2\Re e(b\overline{d})\\
             \end{array} \right] 
	     \end{equation*}
we obtain that $A\in \GLH$ if and only if $\Re e (a\overline{c})=0,\,\,\, \Re e
(b\overline{d})=0, \,\,\, \overline{b}c+\overline{d}a=1$.	     
Finally, a direct computation
shows that the real dimension of $\GLH$ is $10$.
\end{proof}

The following group isomorphism assumes  an interesting geometrical meaning. Let $C=\left[ \begin{array}{rr}
             1 & 1 \\
            -1 & 1 \\
             \end{array} \right]$ be the matrix associated to the
	     Cayley transform (see lemma \ref{isometria}), and define $\Phi:
	     \GLH \to Sp(1,1)$ as the conjugation map
	     $ \Phi(A)=C^{-1}AC.$ We have

\begin{teo}\label{isomorfismo} The group $\GLH$ is isomorphic to $Sp(1,1)$
    via the application $\Phi: A \mapsto C^{-1}AC$.
    \end{teo}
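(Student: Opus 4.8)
The plan is to recognize that $\Phi(A)=C^{-1}AC$ is simply conjugation by the fixed, invertible, \emph{real} matrix $C$, and then to collapse the whole statement into a single congruence identity relating the defining matrices $H$ and $K$. Since $C$ has real entries, $det_{\hh}(C)=|det(C)|=2\neq 0$, so $C\in GL(2,\hh)$ and $A\mapsto C^{-1}AC$ is an injective group homomorphism of $GL(2,\hh)$ into itself: indeed $\Phi(AB)=C^{-1}ABC=(C^{-1}AC)(C^{-1}BC)=\Phi(A)\Phi(B)$, and $\Phi$ is manifestly invertible with inverse $B\mapsto CBC^{-1}$. Hence the entire content of the theorem is that $\Phi$ carries $\GLH$ \emph{onto} $Sp(1,1)$, and for this it suffices to prove the equivalence
\[
A\in\GLH \iff C^{-1}AC\in Sp(1,1).
\]
Once this is established, $\Phi(\GLH)\subseteq Sp(1,1)$, and applying the equivalence to $A=CBC^{-1}$ shows every $B\in Sp(1,1)$ is attained, so $\Phi$ restricts to a bijection, i.e.\ an isomorphism.

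Next I would unwind the $Sp(1,1)$-membership of $B=C^{-1}AC$ using the two facts already available in the excerpt: the adjoint-reversal rule ${}^{t}\overline{XY}={}^{t}\overline{Y}\,{}^{t}\overline{X}$, and the reality of $C$, which gives ${}^{t}\overline{C}={}^{t}C$ and ${}^{t}\overline{(C^{-1})}=({}^{t}C)^{-1}$. Together these yield ${}^{t}\overline{B}={}^{t}C\,{}^{t}\overline{A}\,({}^{t}C)^{-1}$, whence
\[
{}^{t}\overline{B}\,H\,B
={}^{t}C\,\big({}^{t}\overline{A}\,\big(({}^{t}C)^{-1}HC^{-1}\big)\,A\big)\,C.
\]
Setting $K'=({}^{t}C)^{-1}HC^{-1}$ and cancelling the invertible outer factors ${}^{t}C$ and $C$, the condition ${}^{t}\overline{B}HB=H$ becomes equivalent to ${}^{t}\overline{A}\,K'A=K'$.

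The computational heart is then simply to identify $K'$. A direct $2\times2$ multiplication gives ${}^{t}C\,H\,C=2K$, equivalently $K'=({}^{t}C)^{-1}HC^{-1}=-\tfrac12 K$; geometrically this records that $\tfrac{1}{\sqrt2}C$ is the $45^{\circ}$ rotation conjugating the diagonal form $H$ into the anti-diagonal form $K$, which is the promised geometrical meaning of the isomorphism. Since $K'$ is a nonzero real multiple of $K$, the relation ${}^{t}\overline{A}K'A=K'$ holds exactly when ${}^{t}\overline{A}KA=K$, i.e.\ exactly when $A\in\GLH$ by Theorem \ref{glh+}. This proves the displayed equivalence, and hence the theorem. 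The only step requiring care — and the sole genuine obstacle — is keeping the order of factors straight in the quaternionic adjoint ${}^{t}\overline{(\cdot)}$; because $C$ is real the noncommutativity of $\hh$ never actually interferes, so the argument reduces to the elementary congruence ${}^{t}C H C=2K$.
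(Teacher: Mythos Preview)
Your proof is correct and follows essentially the same approach as the paper: both reduce the isomorphism to the congruence identity $({}^{t}C)^{-1}HC^{-1}=-\tfrac12 K$ (the paper also records its companion ${}^{t}CKC=-2H$ and verifies the two inclusions separately, whereas you package them as a single equivalence). One minor quibble: the identity ${}^{t}CHC=2K$ that you quote is true but is not literally ``equivalent'' to $({}^{t}C)^{-1}HC^{-1}=-\tfrac12 K$ without also invoking $C^{-1}=\tfrac12{}^{t}C$; the needed relation is the latter, which you state correctly and which carries the argument.
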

    \begin{proof} Recall that $H=\left[ \begin{array}{rr}
             1 & 0 \\
             0 & -1 \\
             \end{array} \right]$ and  $K=\left[ \begin{array}{rr}
             0 & 1 \\
             1 & 0 \\
             \end{array} \right]$. It is easy to verify that
	     $C^{-1}=\frac{1}{2}^{t}C$, 
	$^{t}C^{-1}HC^{-1}=-\frac{1}{2}K$ and $^{t}CKC=-2H$.
	As a consequence, by applying theorem \ref{glh+}, we 
	obtain 
\begin{eqnarray*}
    ^{t}\overline{[\Phi(A)]}H[\Phi(A)]=\,\,^{t}\overline{(C^{-1}AC)}HC^{-1}AC=\,\,^{t}C\
    ^{t}
  \overline{A}\ (^{t}C^{-1}HC^{-1})AC\\ \nonumber
=-\frac{1}{2}\ ^{t}C\ ^{t}\overline{A}KAC=
  -\frac{1}{2}\ ^{t}CKC=2\frac{1}{2}H =H
             \end{eqnarray*}
for all $A\in \GLH$. This last equality proves the inclusion $\Phi(\GLH)\subseteq
Sp(1,1)$. Moreover
\begin{eqnarray*}
    ^{t}\overline{[\Phi^{-1}(M)]}K[\Phi^{-1}(M)]
    =\,\, ^{t}(\overline{CMC^{-1}}) K
	     (CMC^{-1}) 
	     =\,\, ^{t}C^{-1}\ 
	     ^{t}\overline{M}(^{t}C K C) MC^{-1} \\ 
	     = -2\  ^{t}C^{-1}
	     (^{t}\overline{M} H M) C^{-1} = -2\  (^{t}C^{-1} H C^{-1})= 2 \frac{1}{2} K=K
	     \end{eqnarray*}
for all $M\in Sp(1,1)$, which implies $Sp(1,1)\subseteq \Phi(\GLH)$ and ends the proof, since
$\Phi$ is obviously an injective homomorphism.	    
    \end{proof}

We will conclude the paper with the following result, 
which urges a comparison with the complex case.

\begin{teo}\label{omomorfismo finale}The map 
\begin{eqnarray}
   &\Psi: \GLH \to \GL\nonumber \\
   &A=\left[ \begin{array}{ll}
             a & b \\
             c & d \\
             \end{array} \right]\mapsto
L_{A}(q)=(aq+b)\cdot(cq+d)^{-1}
    \end{eqnarray}
    is a group homomorphism 
    whose kernel is the center of $\GLH$, that is the
    subgroup 
    $$\left\{\pm \left[ \begin{array}{ll}

             1& 0 \\
             0 & 1\\
             \end{array} \right] 
\right\}.
	     $$
\end{teo}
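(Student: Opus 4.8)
The plan is to observe that $\Psi$ is nothing but the restriction to the subgroup $\GLH$ of the homomorphism $\Phi\colon SL(2,\hh)\to\mathbb{G}$ of corollary~\ref{homoSl}, and then to import the structural facts already proved for $Sp(1,1)$ through the Cayley conjugation of theorem~\ref{isomorfismo}. Since $\GLH\subset SL(2,\hh)$ is a group (theorem~\ref{glh+}), and since $L_{XY}=L_X\circ L_Y$ for all $X,Y$ by theorem~\ref{homo}, the restriction $\Psi$ is automatically a homomorphism; no computation is required for this part.

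First I would verify that $\Psi$ really lands in $\GL=\psi\mathbb{M}\psi^{-1}$ (definition~\ref{GLH+}). Writing $C$ for the Cayley matrix, we have $L_C=\psi$ and $L_{C^{-1}}=\psi^{-1}$, so the identity $L_A=L_C\circ L_{C^{-1}AC}\circ L_{C^{-1}}=\psi\circ L_{\Phi(A)}\circ\psi^{-1}$ holds for every $A\in\GLH$, where $\Phi(A)=C^{-1}AC$. By theorem~\ref{isomorfismo}, $\Phi(A)\in Sp(1,1)$, whence $L_{\Phi(A)}\in\mathbb{M}$ by theorem~\ref{discmoeb}; therefore $L_A=\Psi(A)\in\psi\mathbb{M}\psi^{-1}=\GL$. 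Running the same chain backwards yields surjectivity: any $g\in\GL$ has the form $\psi\circ m\circ\psi^{-1}$ with $m=L_M$, $M\in Sp(1,1)$ (theorem~\ref{discmoeb}), and then $g=L_{CMC^{-1}}=\Psi(CMC^{-1})$ with $CMC^{-1}=\Phi^{-1}(M)\in\GLH$.

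Next I would compute the kernel. Because $\Psi$ is the restriction of the homomorphism $\Phi\colon SL(2,\hh)\to\mathbb{G}$ whose kernel is $\{\pm I\}$ (corollary~\ref{homoSl}), and because $\pm I\in\GLH$ (both satisfy $\,{}^{t}\overline{A}KA=K$), one gets $\ker\Psi=\GLH\cap\{\pm I\}=\{\pm I\}$. Finally, to identify this kernel with the center of $\GLH$, I would invoke theorem~\ref{isomorfismo}: the conjugation $A\mapsto C^{-1}AC$ is a group isomorphism $\GLH\to Sp(1,1)$, hence carries the center of $\GLH$ onto the center of $Sp(1,1)$, which is $\{\pm I\}$ by theorem~\ref{discmoeb}; since this conjugation fixes $\pm I$, the center of $\GLH$ is exactly $\{\pm I\}=\ker\Psi$.

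The only genuinely delicate point is the well-definedness/surjectivity step, i.e. the bookkeeping that makes $L_A=\psi\circ L_{\Phi(A)}\circ\psi^{-1}$ precise: one must match the conventions for $C$, $C^{-1}$, and the isomorphism $\Phi$ of theorem~\ref{isomorfismo} with the direction of the Cayley map $\psi\colon\Delta_{\hh}\to\hh^{+}$, so that the conjugation transports $\mathbb{M}$ to $\GL$ and not the reverse. Once the homomorphism identity $L_{XY}=L_X\circ L_Y$ is used to turn matrix conjugation into conjugation of maps, everything else reduces to the already-established facts about $Sp(1,1)$, $\Delta_{\hh}$, and the Cayley transformation.
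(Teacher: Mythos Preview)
Your proof is correct, but it follows a genuinely different route from the paper's. The paper establishes $\Psi(\GLH)\subseteq\GL$ by a direct computation: using the defining relations $\Re e(a\overline{c})=0$, $\Re e(b\overline{d})=0$, $\overline{b}c+\overline{d}a=1$ of $\GLH$, it checks by hand that $\Re e\big((aq+b)\overline{(cq+d)}\big)=0$ whenever $\Re e(q)=0$, so $L_A$ preserves $\partial\hh^{+}$ (and then checks the sign at $q=1$). For the opposite inclusion $\GL\subseteq\Psi(\GLH)$, the paper verifies that the explicit matrix families built in proposition~\ref{isotropia} and theorem~\ref{struttura gruppo GLH+} --- which together parametrize all of $\GL$ --- actually satisfy the defining relations of $\GLH$. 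Your argument bypasses both computations by transporting everything through the Cayley isomorphism of theorem~\ref{isomorfismo}: the identity $L_A=\psi\circ L_{C^{-1}AC}\circ\psi^{-1}$ reduces well-definedness and surjectivity of $\Psi$ to the already-proved theorem~\ref{discmoeb} for $Sp(1,1)$. This is cleaner and more structural; the paper's approach, on the other hand, is self-contained on the $\hh^{+}$ side and makes the earlier explicit descriptions of $\GL$ pay off. You also spell out why $\{\pm I\}$ is exactly the center of $\GLH$ (via the isomorphism with $Sp(1,1)$), a point the paper leaves implicit. One minor quibble: you use the symbol $\Phi$ both for the map $A\mapsto L_A$ of corollary~\ref{homoSl} and for the conjugation $A\mapsto C^{-1}AC$ of theorem~\ref{isomorfismo}; the paper does the same, but in a short argument like yours where both appear in adjacent sentences it would be clearer to rename one of them.
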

\begin{proof}
Since the subsets of matrices $A, N \subset GL(2, \hh)$ defined by
\begin{equation*}
  N = \left\{ \left[ \begin{array}{ll}

             |\alpha|^{-2} \gamma\alpha & \gamma\beta + \alpha \\
             |\alpha|^{-2} \alpha & \beta \\
             \end{array} \right] : \alpha, \beta, \gamma \in \hh,
	     \Re e(\gamma)=0, \Re e(\beta\overline{\alpha})=0 \right\}
\end{equation*}
\begin{equation*}
A = \left\{ \left[ \begin{array}{ll}

             |d|^{-2} d & b \\
             0 & d \\
             \end{array} \right] : b, d\in \hh, \Re e(b\overline{d})=0
	     \right\}
\end{equation*}
are contained in $\GLH$ then, by proposition \ref{forma isotropia} and theorem 
\ref{struttura gruppo GLH+}, $\GL\subseteq \Psi(\GLH)$. Now, if $A= \left[ \begin{array}{ll}
              a & b \\
              c & d \\
             \end{array} \right] \in \GLH$, then 
	     $\overline{d}a+\overline{b}c=1$,  $\Re e
	     (a\overline{c})=0$
	     and $\Re e(b\overline{d})=0$. If
	     $q\in\hh\setminus\rr$ we have
	     \begin{eqnarray*}
		& \Re e((aq+b)(cq+d)^{-1})=\Re
		 e((aq+b)\overline{(cq+d)})\\
		&= \Re
		 e((aq+b)(\overline{q}\ \overline{c}+\overline{d}))=
		 \Re
		 e(a|q|^{2}\overline{c}+aq\overline{d}+b\overline{q}\
		 \overline{c}+b\overline{d})\\
		 &=\Re
		 e(aq\overline{d}+b\overline{q}\
		 \overline{c})=\Re
		 e(q\overline{d}a-q\overline{c}b)=\Re
		 e(q(\overline{d}a-\overline{c}b))\\
		 &=\Re
		 e(q(1-(\overline{b}c+\overline{c}b))=0.
\end{eqnarray*}	     
The last equalities lead to  
$\Psi(A)(\partial\hh^+)=\partial\hh^+$. Since $\Re e(\Psi(A)(1))=1$,  
we have 
$\Psi(\GLH)\subseteq \GL$. The same argument used in theorem \ref{homo} leads to 
the identification of the kernel of $\Psi$ and allows at this point the conclusion of the proof.
\end{proof}
 
The new description of the group of quaternionic, M\"obius transformations
$\mathbb{M}(\hh^{+})$ of $\hh^{+}$ in terms of the group of matrices
$SL(\hh^{+})$
is interesting and promises
developments in several directions. We plan to investigate, for
example, the analogous of the Fuchsian subgroups and the possibility
of constructing Riemann-type $\hh$-surfaces.
\vskip 0.2cm
The Cayley
transformation $\psi: \Delta_{\hh} \to \hh^{+}$, defined by 
$\psi(q)=(1+q)(1-q)^{-1}$, has real coefficients,  and therefore it maps
every $L_{I}=\rr+I\rr \cong \mathbb{C} \ \ (I\in\s)$ onto itself. The
argument used in the complex case 
leads to the definition of the
Poincar\'e differential metric on $\hh^{+}$: for any $q \in
\hh^{+}$ and any $\tau\in \hh$ the length of the vector $\tau$ for the
Poincar\'e metric  at $q$ is expressed by
\begin{equation}\label{poincareaccapiu}
    \langle \tau \rangle_{q}=\frac{|\tau|}{2|\Re e(q)|}.
\end{equation}
Formula (\ref{poincareaccapiu}) leads as before to the definition of
the (square of the) Poincar\'e length element in $\hh^{+}$:
\begin{equation*}
ds_{\hh^{+}}^{2}=\frac{|d_{I}q|^{2}}{4|\Re e(q)|^{2}}.
\end{equation*}

We end the paper by stating the following result, whose proof is
straightforward:
\begin{teo}
    The Poincar\'e differential metric (\ref{poincareaccapiu}) is invariant under
    the action of the group $\mathbb{M}^{*}(\hh^{+})$ of all extended 
    M\"obius transformations of $\hh^{+}$. Moreover, the Poincar\'e distance $\omega$ of
    $\hh^{+}$ defined by  $\omega(q_{1}, q_{2})= \frac{1}{2}\log
    (\ratio(q_1,q_2,q_3,q_4))$ is the integrated distance of the
    Poincar\'e differential metric.
\end{teo}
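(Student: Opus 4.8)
The plan is to transfer everything from $\Delta_{\hh}$ to $\hh^{+}$ through the Cayley transformation $\psi(q)=(1+q)(1-q)^{-1}$, reducing the statement to the corresponding facts already proved on the disc, namely Theorem \ref{invarianza della metricadiff} and the Proposition asserting that $\delta_{\Delta}$ is the integrated distance of the Poincar\'e differential metric of $\Delta_{\hh}$. The whole argument rests on a single claim: that $\psi$ is an isometry between the Poincar\'e \emph{differential} metrics of $\Delta_{\hh}$ and $\hh^{+}$, i.e. that $\langle d\psi_{q}(\tau)\rangle_{\psi(q)}=\langle\tau\rangle_{q}$ for every $q\in\Delta_{\hh}$ and every $\tau\in\hh$. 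Once this is granted, both assertions follow formally.

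To prove the claim I would first compute $\Re e(\psi(q))$. Writing $\psi(q)=(1+q)(1-\overline{q})|1-q|^{-2}$ and expanding $(1+q)(1-\overline{q})=1-\overline{q}+q-|q|^{2}$, the real part telescopes to $\Re e(\psi(q))=(1-|q|^{2})|1-q|^{-2}$, which is positive since $|q|<1$. Next I would obtain the dilation coefficient of $\psi$ exactly as in the proof of Theorem \ref{invarianza della metricadiff}: decomposing $\psi(q)=-1+2(1-q)^{-1}$ into maps of the types listed in Lemma \ref{struttura di G} (the rigid motion $q\mapsto 1-q$, the inversion, the dilation by $2$, and the translation by $-1$) and multiplying their dilation coefficients gives total coefficient $2|1-q|^{-2}$. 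Substituting into the metric of $\hh^{+}$,
\begin{equation*}
\langle d\psi_{q}(\tau)\rangle_{\psi(q)}=\frac{2|1-q|^{-2}\,|\tau|}{2\,(1-|q|^{2})|1-q|^{-2}}=\frac{|\tau|}{1-|q|^{2}}=\langle\tau\rangle_{q},
\end{equation*}
establishing the claim.

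From the claim the first assertion is immediate. Every $h\in\mathbb{M}^{*}(\hh^{+})$ has the form $h=\psi\circ g\circ\psi^{-1}$ with $g\in\mathbb{M}^{*}$, by the definition $\mathbb{M}^{*}(\hh^{+})=\psi\mathbb{M}^{*}\psi^{-1}$. Since $\psi$, and hence $\psi^{-1}$, is a differential-metric isometry by the claim, and $g$ preserves the Poincar\'e differential metric of $\Delta_{\hh}$ by Theorem \ref{invarianza della metricadiff}, the composition $h$ preserves the Poincar\'e differential metric of $\hh^{+}$. For the second assertion, the claim also shows that $\psi$ preserves the differential-metric length of every piecewise-differentiable curve, hence carries the integrated distance of $\Delta_{\hh}$ to the integrated distance of $\hh^{+}$. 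Combining the Proposition on $\Delta_{\hh}$ (which identifies that integrated distance with $\delta_{\Delta}$) with Theorem \ref{iso} (stating that $\psi$ is an isometry for the distances $\delta_{\Delta}$ and $\omega$) yields that $\omega$ equals the integrated distance of the Poincar\'e differential metric of $\hh^{+}$.

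The only genuine computation, and hence the main (though modest) obstacle, is the verification of the claim: getting the two factors $\Re e(\psi(q))=(1-|q|^{2})|1-q|^{-2}$ and the dilation coefficient $2|1-q|^{-2}$ to cancel correctly against the normalizing factor $2$ in the metric of $\hh^{+}$. Everything else is the formal observation that differential-metric isometries compose and simultaneously transport metric invariance and integrated distances, so that no independent work on $\hh^{+}$ is required.
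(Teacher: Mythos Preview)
Your proposal is correct and is precisely the argument the paper has in mind: the paper omits the proof entirely, declaring it ``straightforward'', but the paragraph immediately preceding the statement makes clear that the metric on $\hh^{+}$ is defined by pushing forward along the Cayley map $\psi$ (slice by slice, via the classical complex computation), so the intended proof is exactly the transfer you carry out. Your verification that $\Re e(\psi(q))=(1-|q|^{2})|1-q|^{-2}$ and that the dilation coefficient of $\psi$ equals $2|1-q|^{-2}$, together with the formal transport of Theorem~\ref{invarianza della metricadiff}, the Proposition on the integrated distance in $\Delta_{\hh}$, and Theorem~\ref{iso}, fills in the details the authors left to the reader.
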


\bibliographystyle{amsplain}

\begin{thebibliography}{Co-MAc}
\bibitem{Ah1} L.V. Ahlfors {\em M\"{o}bius Transformations and
Clifford Numbers}. Differential Geometry and Complex Analysis, H.E.
Rauch memorial volume, Springer-Verlag, Berlin, 1985, pp. 65-73.   
\bibitem{Ah2} L.V. Ahlfors {\em On the fixed points of M\"{o}bius
Transformations in $\mathbb{R}^n$}. Annales Academiae Scientiarum
Fennicae, Series A.I. Mathematica, Volumen {\bf 10}, 1985, pp. 15-27. 
\bibitem{Ah3} L.V. Ahlfors {\em Cross
Ratios and Schwarzian Derivatives in $\mathbb{R}^{n}$}, in: Complex
Analysis, Edited by J.Hersch, A. Huber, Birk\"auser Verlag, Basel
1988, pp. 1-15.
\bibitem{As} H. Aslaksen {\em Quaternionic Determinants}, Math.
Intelligencer, {\bf 18}(3) (1996), pp. 57-65.
\bibitem{CPW} W. Cao, J.R. Parker, X. Wang {\em On the classification
of quaternionic M\"{o}bius transformations}. Math. Proc. Camb. Phil.
Soc. (2004), {\bf 137}, 349-361. 
\bibitem{C} H. Cartan {\em Cours de Maitrise des Sciences
Math\'{e}matiques}. 1969-1970.
\bibitem{C-D} N. Cohen, S. De Leo {\em The quaternionic determinant}. 
The Electronic Journal of Linear Algebra, {\bf 7}(2000), pp. 100-111.
\bibitem{DNF} B. Doubrovine,  S. Novikov,  A. Fomenko  {\em Geometrie Contemporaine. 
Methodes et applications.  Deuxieme partie.
Geometrie et Topologie des varietes}, Mir, Moscow, 1985, pp. 371.
\bibitem{FV} T. Franzoni, E. Vesentini {\em Holomorphic maps and
invariant distances}
. Notas de Matematica [Mathematical Notes], {\bf 69} North-Holland
Publishing Co., 
Amsterdam, New York, 1980, 226 pp. 
\bibitem{GRW} I. Gelfand, V. Retakh, R. L. Wilson {\em Quaternionic
quasideterminants and determinants}.  Lie groups and symmetric
spaces,  
111--123, Amer. Math. Soc. Transl. Ser. 2, 210, Amer. Math. Soc., 
Providence, RI, 2003.
\bibitem{GS} G. Gentili, D.C. Struppa {\em A new theory of regular
functions of a quaternionic variable} Adv. Math., {\bf 216}(2007), pp.
279-301.
\bibitem{GOV}  V. V. Gorbatsevich, A. L. Onishchik, E. B. Vinberg, 
{\em Structure of Lie Groups and Lie Algebras}. 
Translated from the Russian by V. Minachin. 
[{\em Lie groups and Lie algebras. III}],  
Encyclopaedia Math. Sci., 41, Springer-Verlag, 
Berlin, 1991.
\bibitem{HJ} R. Heidrich, G. Jank {\em On the Iteration of
Quaternionic M\"{o}bius Transformations}. Complex Variables, 1996,
Vol. {\bf 29}, 313-318.
\bibitem{H} S. Helgason {\em Differential geometry, Lie groups, and symmetric spaces}, 
Academic Press, New York, San Francisco, London, 1978, 628 pp.
\bibitem{KN} S. Kobayashi, K. Nomizu {\em Foundations of differential
geometry}, Vol. II. Interscience Tracts in Pure and Applied Mathematics, No. 15 Vol. II  
Interscience Publishers John Wiley \& Sons, Inc., New  York-London-Sydney
1969, 470 pp.
\bibitem{P} R.M. Porter {\em Quaternionic M\"{o}bius Transformations
and Loxodromes}. Complex Variables, Vol. {\bf 36}, 1998, pp. 285-300.
\bibitem{WR} V. Retakh, R. L. Wilson {\em Advanced course on
Quasideterminants and Universal Localization}, notes of the course,
Centre de Recerca Matematica (CRM), Bellaterra, Barcelona (2007).
\bibitem{Rud} W. Rudin, {\em Function theory in the unit ball of
$\mathbb{C}^n$}, Grundlehren der Mathematischen Wissenschaften
[Fundamental 
Principles  of Mathematical Science], {\bf 241}. Springer-Verlag, New
York-Berlin, 1980.
\bibitem{Siegel} C.L. Siegel {\em Topics in Complex Function Theory}.
Vol. II {\em Automorphic Functions and Abelian Integrals}.
Wiley-Interscience Tracts in Pure and Applied Mathematics.
\bibitem{Ves} E. Vesentini {\em Capitoli scelti della teoria delle
funzioni olomorfe}. Unione Matematica Italiana, 1984. 
\bibitem{W} X. Wang {\em Quaternionic M\"{o}bius Transformations and
Subgroups}. Complex Variables, Vol. {\bf 48}, n. 7, 2003, pp. 599-606.
\end{thebibliography}

\end{document}